\numberwithin{equation}{section}
\numberwithin{equation}{subsection}
\renewcommand*{\theequation}{%
  \ifnum\value{subsection}=0 %
    \thesection
  \else
    \thesubsection
  \fi
  .\arabic{equation}%
}
\newtheorem{thm}{Theorem}[section]
\newtheorem{lem}[thm]{Lemma}
\newtheorem{cor}[thm]{Corollary}
\title[Sign changes of the error term in the Piltz divisor problem]{Sign changes of the error term in the Piltz divisor problem}
\author{Siegfred Baluyot}
\address{American Institute of Mathematics\\
600 East Brokaw Road\\
San Jose, CA 95112, United States of America}
\email{\href{mailto:sbaluyot@aimath.org}{sbaluyot@aimath.org}}
\author{Cruz Castillo}
\address{ {Department of Mathematics \\
 University of Illinois at Urbana-Champaign \\
 1409 West Green Street, Urbana, IL 61801, United States of America} }
\email{\href{mailto:ccasti30@illinois.edu}{ccasti30@illinois.edu}}
\subjclass[2010]{11N37, 11M06. \\ \indent \textit{Keywords and phrases}: divisor function, Dirichlet divisor problem, moments, Riemann zeta-function, short intervals, fourth moment}
\begin{document}
\date{}

\begin{abstract}
We study the function $\Delta_k(x):=\sum_{n\leq x} d_k(n) - \mbox{Res}_{s=1} ( \zeta^k(s) x^s/s )$, where $k\geq 3$ is an integer, $d_k(n)$ is the $k$-fold divisor function, and $\zeta(s)$ is the Riemann zeta-function. For a large parameter $X$, we show that if the Lindel\"{o}f hypothesis is true, then there exist at least $X^{\frac{1}{k(k-1)}-\varepsilon}$ disjoint subintervals of $[X,2X]$, each of length $X^{1-\frac{1}{k}-\varepsilon}$, such that $|\Delta_k(x)|\gg x^{\frac{1}{2}-\frac{1}{2k}}$ for all $x$ in the subinterval. If the Riemann hypothesis is true, then we can improve the length of the subintervals to $\gg X^{1-\frac{1}{k}} (\log X)^{-k^2-2}$. These results may be viewed as higher-degree analogues of theorems of Heath-Brown and Tsang, who studied the case $k=2$, and Cao, Tanigawa, and Zhai, who studied the case $k=3$. The first main ingredient of our proofs is a bound for the second moment of $\Delta_k(x+h)-\Delta_k(x)$. We prove this bound using a method of Selberg and a general lemma due to Saffari and Vaughan. The second main ingredient is a bound for the fourth moment of $\Delta_k(x)$, which we obtain by combining a method of Tsang with a technique of Lester.
\end{abstract}

\maketitle

\section{Introduction and results}

For each integer $k\geq 2$, let $d_k(n)$ be the number of ways to write $n$ as a product $n_1n_2\cdots n_k$ with each $n_i$ a positive integer. Define
\begin{equation}\label{deltakdef}
\Delta_k (x):=\sum_{n\leq x} d_k (n) - \underset{s=1}{\mbox{Res}} \left(\frac{\zeta^k(s) x^s}{s}\right),
\end{equation}
where $\zeta(s)$ is the Riemann zeta-function. In 1955, Tong~\cite{tongI} showed for each $k\geq 2$ that there exists a constant $\beta_k>0$ such that, for all large enough $X$, $\Delta_k(x)$ changes sign at least once in the interval $[X,X+\beta_kX^{1-\frac{1}{k}}]$. The present article concerns the question: Can we shorten the length of this interval and still guarantee that $\Delta_k(x)$ changes sign at least once in the interval?

Heath-Brown and Tsang~\cite{hbtsang} have proven the existence of at least $\gg \sqrt{X}\log^5 X$ disjoint subintervals of $[X,2X]$, each of length a constant times $\sqrt{X} (\log X)^{-5}$, such that $|\Delta_2(x)|\gg x^{1/4}$ for all $x$ in any of the subintervals. Since $\Delta_2(x)$ is continuous except for jump discontinuities of size $d_2(n)\ll n^{\varepsilon}$, it follows that $\Delta_2(x)$ does not change sign in any of these subintervals. Thus, the case $k=2$ of Tong's theorem becomes false if we replace $\beta_2 \sqrt{X}$ by some constant times $\sqrt{X} (\log X)^{-5}$. In other words, the $k=2$ case of Tong's theorem is best possible up to factors of $\log X$.

In this paper, we prove under the assumption of the Riemann hypothesis (RH) that the the $k\geq 3$ case of Tong's theorem is best possible up to factors of $\log X$. For each integer $k\geq 2$, define the constant $C_k$ by
\begin{equation}\label{Ckdef}
C_k = \frac{1}{\pi} \Bigg(\frac{1}{2k} \sum_{n=1}^{\infty} \frac{d_k^2(n)}{n^{1+\frac{1}{k}}} \Bigg)^{1/2}.
\end{equation}
\begin{thm}\label{thm:deltaknosignchangeRH}
Assume the Riemann hypothesis and let $k\geq 3$ be an integer. Let $C_k$ be defined by \eqref{Ckdef}, and let $\varepsilon$ be an arbitrarily small positive constant. There exists constants $c_0,X_0>0$, with $c_0$ depending only on $k$ and $X_0$ depending only on $k$ and $\varepsilon$, such that if $X\geq X_0$, then there are at least $X^{\frac{1}{k(k-1)}-\varepsilon}$ disjoint subintervals of $[X,2X]$, each of length $c_0 \varepsilon X^{1-1/k}(\log X)^{-k^2-2}$, such that $|\Delta_k(x)|>(\frac{1}{2}C_k-\varepsilon)x^{\frac{1}{2}-\frac{1}{2k}}$ for all $x$ in the subinterval. In particular, $\Delta_k(x)$ does not change sign in any of these subintervals.
\end{thm}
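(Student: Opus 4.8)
The plan is to run the Heath-Brown and Tsang machinery with the two new moment bounds as inputs. First I would pass to a thin range: since any subinterval of a ``good'' interval is again good, it suffices to produce the intervals inside $[X,(1+\eta_0)X]$ for a small fixed $\eta_0>0$, on which every fixed power of $x$ is constant up to a factor $1+O(\eta_0)$. Set $H=c_0\varepsilon X^{1-1/k}(\log X)^{-k^2-2}$ and chop $[X,(1+\eta_0)X]$ into $N\asymp \eta_0 X/H\asymp X^{1/k}(\log X)^{k^2+2}$ consecutive intervals $I_1,\dots,I_N$ of length $H$; since $H/X$ tends to $0$ faster than any power of $\log X$, every fixed power of $x$ is constant up to $1+o(1)$ on each $I_j$. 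Write $R_j=\int_{I_j}\Delta_k(x)^2\,dx$. I will use the classical mean-square asymptotic, which under RH (hence under the Lindel\"of hypothesis) holds for every $k\ge 3$ and, in the normalization of \eqref{Ckdef}, reads $\int_X^{2X}\Delta_k(x)^2\,dx\sim C_k^2\int_X^{2X}x^{1-1/k}\,dx$, so that $\sum_j R_j\gg\eta_0 X^{2-1/k}$ and the average of $H^{-1}R_j$ over $j$ is $(1+O(\eta_0))C_k^2X^{1-1/k}$.

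The two new ingredients enter as follows. (a) The fourth-moment bound, which I expect to be of the shape $\int_X^{2X}\Delta_k(x)^4\,dx\ll X^{3-\frac1{k-1}+\varepsilon}$; Cauchy--Schwarz on each $I_j$ then gives $\sum_j R_j^2\le H\int_X^{2X}\Delta_k(x)^4\,dx\ll HX^{3-\frac1{k-1}+\varepsilon}$. (b) The second-moment bound for $\Delta_k(x+u)-\Delta_k(x)$, obtained by Selberg's method and the Saffari--Vaughan lemma, which should read $\int_X^{2X}|\Delta_k(x+u)-\Delta_k(x)|^2\,dx\ll uX(\log X)^{k^2-1}$ for $1\le u\le X^{1-1/k}$, together with its maximal form controlling $\sup_{0\le u\le H}|\Delta_k(x+u)-\Delta_k(x)|$ in mean square, to the effect that $\Delta_k$ oscillates by only $o\!\big(X^{\frac12-\frac1{2k}}\big)$ on a typical $I_j$.

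Combining the mean square with (a): with $\bar R=N^{-1}\sum_j R_j$ and $\mathcal J=\{j:R_j\ge\tfrac12\bar R\}$ one has $\sum_{j\in\mathcal J}R_j\ge\tfrac12\sum_j R_j$, so $\big(\tfrac12\sum_j R_j\big)^2\le|\mathcal J|\sum_j R_j^2$ yields
\[
|\mathcal J|\;\gg\;\frac{\big(\sum_j R_j\big)^2}{\sum_j R_j^2}\;\gg\;\frac{X^{4-2/k}}{HX^{3-\frac1{k-1}+\varepsilon}}\;\gg\;X^{\frac1{k(k-1)}-\varepsilon}(\log X)^{k^2+2}.
\]
For each $j\in\mathcal J$ we have $\max_{I_j}|\Delta_k|^2\ge H^{-1}R_j\ge\tfrac12 H^{-1}\bar R=(\tfrac12+O(\eta_0))C_k^2X^{1-1/k}$, so $\Delta_k$ attains somewhere on $I_j$ an absolute value exceeding $\tfrac12C_kX^{\frac12-\frac1{2k}}$ by a fixed positive margin $2\delta_kX^{\frac12-\frac1{2k}}$, $\delta_k=\delta_k(k)>0$, provided $\eta_0$ is small. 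Hence, for every $j\in\mathcal J$ on which the oscillation of $\Delta_k$ is at most $\delta_kX^{\frac12-\frac1{2k}}$, one gets $|\Delta_k(x)|\ge\max_{I_j}|\Delta_k|-\delta_kX^{\frac12-\frac1{2k}}>(\tfrac12C_k-\varepsilon)x^{\frac12-\frac1{2k}}$ for all $x\in I_j$ (using $x^{\frac12-\frac1{2k}}=(1+o(1))X^{\frac12-\frac1{2k}}$ on $I_j$), so $\Delta_k$ does not change sign on $I_j$. Everything thus reduces to finding $\gg X^{\frac1{k(k-1)}-\varepsilon}$ indices $j\in\mathcal J$ with small oscillation.

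The hard part is precisely this reduction, and it is where the two ingredients must be combined rather than used separately. By (b) alone the number of $j$ with oscillation $>\delta_kX^{\frac12-\frac1{2k}}$ is $\ll\delta_k^{-2}X^{1/k}(\log X)^{k^2-1}$, which is a $(\log X)^{-3}$-fraction of $N$ but — for $k\ge 3$ — a larger power of $X$ than $|\mathcal J|$, so one cannot simply discard those intervals. One has to show instead that ``$j\in\mathcal J$'' and ``$\Delta_k$ oscillates a lot on $I_j$'' are essentially uncorrelated. The natural mechanism is to split $\Delta_k$ through its Voronoi expansion into a low-frequency part (the terms with $n$ up to a suitable power of $\log X$, which is genuinely constant up to $o(X^{\frac12-\frac1{2k}})$ on each $I_j$ and which controls both $R_j$ and the value of $\Delta_k$ there) and a high-frequency tail (small in mean square and responsible for the oscillation): membership in $\mathcal J$ depends only on the low-frequency part, the oscillation only on the tail, and the two parts share no common frequencies. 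The crux of the proof is then a mixed fourth-moment-type estimate — of the kind obtained by the combination of Tsang's method with Lester's technique used for the fourth moment — bounding the integral of $(\text{tail})^2$ over the set where the low-frequency part is large by essentially its ``decoupled'' size; this keeps $\mathcal J$ intersected with the small-oscillation set of size $\gg X^{\frac1{k(k-1)}-\varepsilon}$, and it is exactly here that the exponent $k^2+2$ in $H$ is calibrated so that the needed power of $\log X$ is gained. Granting it, the resulting $\gg X^{\frac1{k(k-1)}-\varepsilon}$ disjoint intervals $I_j$ of length $c_0\varepsilon X^{1-1/k}(\log X)^{-k^2-2}$ are as required.
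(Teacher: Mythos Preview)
Your proposal has a genuine gap, and you have located it yourself: after producing $|\mathcal J|\gg X^{\frac{1}{k(k-1)}-\varepsilon}$ intervals with large $R_j$, you still need most of those particular $I_j$ to have small oscillation, and as you correctly compute, the oscillation bound alone only controls $\ll X^{1/k}(\log X)^{O(1)}$ bad intervals, which for $k\ge 3$ swamps $|\mathcal J|$. Your proposed fix---a frequency decomposition plus a ``mixed fourth-moment-type estimate'' showing that large-$R_j$ and large-oscillation are uncorrelated---is not established here; you write ``granting it'' but give no proof, and no such estimate appears in the paper. This is not a technicality: a priori the large contributions to $\int\Delta_k^2$ could come precisely from the regions where $\Delta_k$ oscillates violently.

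The paper avoids this obstacle entirely, and the device is the heart of the Heath--Brown--Tsang method. Rather than first selecting intervals with large $R_j$ and then hoping the oscillation is small there, one combines the two conditions \emph{pointwise} before integrating. Set $G_k(x)=|\Delta_k(x)|-(\tfrac12 C_k-\eta)x^{\frac12-\frac1{2k}}$ and
\[
W_k(x)\;=\;G_k(x)^2\;-\;\sup_{0\le h\le H}\bigl(G_k(x+h)-G_k(x)\bigr)^2\;-\;\Bigl(\tfrac12 C_k x^{\frac12-\frac1{2k}}\Bigr)^2.
\]
If $W_k(x)>0$ then automatically $G_k(y)>0$ for all $y\in[x,x+H]$, so the set $\mathcal S=\{x\in[X,2X]:W_k(x)>0\}$ already consists of starting points of good intervals. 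Now the three terms in $W_k$ are integrated \emph{separately}: Tong's mean-square formula gives $\int G_k^2\ge(1+o(1))(\tfrac12 C_k+\eta)^2\int x^{1-1/k}\,dx$; the sup term, by the maximal second-moment bound (which under RH is $\ll HX(\log X)^{k^2+2}$, not $(\log X)^{k^2-1}$), is made $\le\tfrac12 C_k\eta\int x^{1-1/k}\,dx$ by the choice $H=c_0\eta X^{1-1/k}(\log X)^{-k^2-2}$; and the last term is exactly $(\tfrac12 C_k)^2\int x^{1-1/k}\,dx$. The net is $\int W_k\gg_\eta X^{2-1/k}$. Since $\int W_k\le\int_{\mathcal S}G_k^2\le |\mathcal S|^{1/2}\bigl(\int G_k^4\bigr)^{1/2}$ by Cauchy--Schwarz, the fourth-moment bound $\int\Delta_k^4\ll X^{3-\frac1{k-1}+\varepsilon}$ gives $|\mathcal S|\gg X^{1-\frac2k+\frac1{k-1}-\varepsilon}$, hence $\gg|\mathcal S|/H\gg X^{\frac1{k(k-1)}-\varepsilon}$ disjoint good intervals. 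No decorrelation is needed because the subtraction of the oscillation term happens at the level of integrals over all of $[X,2X]$, not on the already-selected set $\mathcal J$.
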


If we assume the weaker Lindel\"{o}f hypothesis (LH) instead of RH, then we can prove that the $k\geq 3$ case of Tong's theorem is best possible up to a factor of $X^{\varepsilon}$.
\begin{thm}\label{thm:deltaknosignchange}
Assume the Lindel\"{o}f hypothesis and let $k\geq 3$ be an integer. Let $C_k$ be defined by \eqref{Ckdef}, and let $\xi$ and $\varepsilon$ be arbitrarily small positive constants. There exists a constant $X_0$ depending only on $k$, $\xi$, and $\varepsilon$ such that if $X\geq X_0$, then there are at least $ X^{\frac{1}{k(k-1)}+\xi-\varepsilon}$ disjoint subintervals of $[X,2X]$, each of length $X^{1-\frac{1}{k}-\xi}$, such that $|\Delta_k(x)|>(\frac{1}{2}C_k-\varepsilon)x^{\frac{1}{2}-\frac{1}{2k}}$ for all $x$ in the subinterval. In particular, $\Delta_k(x)$ does not change sign in any of these subintervals.
\end{thm}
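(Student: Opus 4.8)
The plan is to deduce Theorem~\ref{thm:deltaknosignchange} from the two moment bounds advertised in the abstract, via the classical dichotomy argument of Heath-Brown and Tsang. Write $I=[X,2X]$ and let $H=X^{1-1/k-\xi}$ be the target subinterval length. The first ingredient is a second-moment bound for the increment $\Delta_k(x+h)-\Delta_k(x)$: under LH one expects, for $h$ in a suitable range, an estimate of the shape
\begin{equation*}
\int_X^{2X} \bigl(\Delta_k(x+h)-\Delta_k(x)\bigr)^2\,dx \ll X\, h^{1-1/k} (\log X)^{O(1)} \qquad\text{(plus admissible error)},
\end{equation*}
obtained by the Selberg-type device together with the Saffari--Vaughan lemma, as the excerpt indicates. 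The second ingredient is an upper bound for the fourth moment,
\begin{equation*}
\int_X^{2X} \Delta_k(x)^4\,dx \ll X^{3-2/k+\varepsilon},
\end{equation*}
which is the "correct" order of magnitude and matches the conjectured size $\Delta_k(x)\asymp x^{1/2-1/(2k)}$ on average; this is where Tsang's method combined with Lester's technique enters. Alongside these I will need the known lower bound for the second moment, $\int_X^{2X}\Delta_k(x)^2\,dx \gg X^{2-1/k}$ (which on LH is classical and in fact asymptotic with main term $\sim c\, X^{2-1/k}$, consistent with the constant $C_k$ in \eqref{Ckdef}), and the trivial-from-the-second-moment fact that $\int_{X}^{2X}\Delta_k(x)^2\,dx \asymp X^{2-1/k}$.

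The core argument then runs as follows. Partition $[X,2X]$ into $\asymp X/H = X^{1/k+\xi}$ consecutive intervals $J$ of length $H$. On each $J$, if $|\Delta_k(x)|$ does \emph{not} stay large, then there is a point $x_0\in J$ with $|\Delta_k(x_0)|$ small (say $\le \tfrac12 C_k x_0^{1/2-1/(2k)}$ up to $\varepsilon$), and then for every $x\in J$ we can bound $|\Delta_k(x)|$ by $|\Delta_k(x_0)|$ plus the increment $|\Delta_k(x)-\Delta_k(x_0)|$ (handling the jump discontinuities of size $d_k(n)\ll n^\varepsilon$, which are negligible at scale $x^{1/2-1/(2k)}$). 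Call $J$ "bad" if $|\Delta_k(x)|$ fails to exceed $(\tfrac12 C_k-\varepsilon)x^{1/2-1/(2k)}$ somewhere on it, and "good" otherwise; we want to show the number of good $J$ is $\gg X^{1/(k(k-1))+\xi-\varepsilon}$, and then each good $J$ contains a subinterval on which $\Delta_k$ has constant sign — actually on a good $J$ the whole interval works after possibly shrinking slightly to clear the finitely many jumps, but since jumps are $O(X^\varepsilon)$ and the lower bound is a power of $X$, no shrinking is needed. The contribution of bad intervals to $\int \Delta_k^2$ is controlled: on a bad $J$,
\begin{equation*}
\int_J \Delta_k(x)^2\,dx \ll H\cdot X^{1-1/k-\varepsilon} + \int_J \bigl(\Delta_k(x)-\Delta_k(x_0)\bigr)^2\,dx,
\end{equation*}
and summing the increment term over all $J$ using the second-moment-of-increments bound with $h\le H$ gives $\ll X H^{1-1/k}(\log X)^{O(1)} = X^{2-1/k-\xi(1-1/k)+o(1)}$, which is $o(X^{2-1/k})$. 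Hence the bad intervals carry only $o(X^{2-1/k})$ of the total second moment $\gg X^{2-1/k}$, so the good intervals carry $\gg X^{2-1/k}$.

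To convert a lower bound for $\sum_{\text{good }J}\int_J \Delta_k^2$ into a lower bound for the \emph{number} of good $J$, apply Hölder (or Cauchy--Schwarz) against the fourth moment: for each good $J$, by Cauchy--Schwarz $\int_J\Delta_k^2 \le H^{1/2}\bigl(\int_J \Delta_k^4\bigr)^{1/2}$, so
\begin{equation*}
X^{2-1/k} \ll \sum_{\text{good }J}\int_J \Delta_k^2 \le H^{1/2}\sum_{\text{good }J}\Bigl(\int_J\Delta_k^4\Bigr)^{1/2} \le H^{1/2}\,N^{1/2}\Bigl(\sum_J \int_J\Delta_k^4\Bigr)^{1/2} \le H^{1/2} N^{1/2} \bigl(X^{3-2/k+\varepsilon}\bigr)^{1/2},
\end{equation*}
where $N$ is the number of good intervals. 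Solving, $N \gg X^{4-2/k}/\bigl(H\, X^{3-2/k+\varepsilon}\bigr) = X^{1-\varepsilon}/H = X^{1/k+\xi-\varepsilon}$. That is already more than enough — indeed more than the claimed $X^{1/(k(k-1))+\xi-\varepsilon}$ — so the exponent in the theorem is comfortably reached (the weaker exponent $1/(k(k-1))$ presumably reflects a cleaner uniform statement, or losses absorbed elsewhere; in any case the count we obtain dominates it). Finally, on each good $J$, since $|\Delta_k(x)|>(\tfrac12C_k-\varepsilon)x^{1/2-1/(2k)} \gg X^{1/2-1/(2k)}$ throughout and this exceeds the largest possible jump $\ll X^\varepsilon$, $\Delta_k$ cannot change sign on $J$, giving the disjoint non-sign-change subintervals of length $H=X^{1-1/k-\xi}$.

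The main obstacle is establishing the second-moment bound for the increment $\Delta_k(x+h)-\Delta_k(x)$ with a power saving in $h$ (i.e.\ the $h^{1-1/k}$ gain rather than the trivial $h$), uniformly for $h$ as large as $X^{1-1/k-\xi}$; this is precisely the delicate point where LH is used, and it requires the Selberg mollification idea plus the Saffari--Vaughan variance lemma to pass from a Dirichlet-polynomial mean value to the short-interval variance. Everything after that — the partitioning, the good/bad dichotomy, the Cauchy--Schwarz against the fourth moment, and clearing the jump discontinuities — is routine once the two moment estimates are in hand, and the fourth-moment bound $\int_X^{2X}\Delta_k^4 \ll X^{3-2/k+\varepsilon}$, while itself substantial, is quoted as an established ingredient.
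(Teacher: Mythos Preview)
Your overall strategy---the Heath-Brown--Tsang dichotomy, feeding in a second-moment bound for increments and a fourth-moment bound for $\Delta_k$---is the paper's approach. However, both moment bounds you quote are stronger than what is actually available, and this is where the gap lies.

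First, under LH the increment bound is not $\int_X^{2X}(\Delta_k(x+h)-\Delta_k(x))^2\,dx \ll Xh^{1-1/k}(\log X)^{O(1)}$ but only $\ll hX^{1+\varepsilon}$ (Corollary~\ref{cor:dkinshortintervalsLH}); there is no extra saving of a power of $h$. This is exactly why the subinterval length must be $H=X^{1-1/k-\xi}$ and no larger: with the true bound one gets $HX^{1+\varepsilon}=X^{2-1/k-\xi+\varepsilon}=o(X^{2-1/k})$, just barely. Also, since your anchor point $x_0$ varies with the interval $J$, bounding $\sum_J\int_J(\Delta_k(x)-\Delta_k(x_0))^2\,dx$ requires the \emph{sup} version $\int_X^{2X}\sup_{0\le h\le H}(\Delta_k(x+h)-\Delta_k(x))^2\,dx\ll HX^{1+\varepsilon}$, which the paper obtains from the fixed-$h$ bound by a dyadic telescoping argument (see \eqref{supasmax2}--\eqref{eqn:dkshortintervalssup}); this step is not automatic.

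Second, and more seriously, the fourth-moment bound $\int_X^{2X}\Delta_k^4\,dx\ll X^{3-2/k+\varepsilon}$ that you treat as ``an established ingredient'' is \emph{not} known under LH; it would follow from Titchmarsh's pointwise conjecture $\Delta_k(x)\ll x^{1/2-1/(2k)+\varepsilon}$, which is open. What the paper actually proves (Theorem~\ref{thm:deltakfourthmmnt}) is the weaker $\ll X^{3-1/(k-1)+\varepsilon}$. Redoing your final Cauchy--Schwarz step with this correct bound gives
\[
N \;\gg\; \frac{X^{4-2/k}}{H\cdot X^{\,3-\frac{1}{k-1}+\varepsilon}} \;=\; X^{\frac{1}{k-1}-\frac{1}{k}+\xi-\varepsilon} \;=\; X^{\frac{1}{k(k-1)}+\xi-\varepsilon},
\]
which is precisely the exponent in the theorem. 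So $1/(k(k-1))$ is not slack ``absorbed elsewhere''---it is exactly what the available fourth-moment bound yields, and your stronger count $X^{1/k+\xi-\varepsilon}$ is unjustified.
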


Note that, similarly to the result of Heath-Brown and Tsang, Theorems~\ref{thm:deltaknosignchangeRH} and \ref{thm:deltaknosignchange} do not rule out the possibility that some of the disjoint subintervals may have a union that is contained in a longer subinterval on which $\Delta_k(x)$ does not change sign. On the other hand, Tong's theorem implies that this longer subinterval cannot have length larger than $\beta_k (2X)^{1-\frac{1}{k}}$.

To prove Theorems~\ref{thm:deltaknosignchangeRH} and \ref{thm:deltaknosignchange}, we will use the method of Heath-Brown and Tsang~\cite{hbtsang} for detecting intervals on which $\Delta_k(x)$ does not change sign. Their method requires bounds for the fourth moment of $\Delta_k(x)$ and the second moment of $\Delta_k(x+h)-\Delta_k(x)$. We provide such bounds by proving Theorems~\ref{thm:dkinshortintervalsnosup}, \ref{thm:dkinshortintervalsRH}, and \ref{thm:deltakfourthmmnt} below. A lot of research has been put towards understanding these moments and other properties of $\Delta_k(x)$ in recent decades.

Historically, a great deal of work has been done towards finding upper bounds for the order of magnitude of $\Delta_k(x)$. The well-known Dirichlet divisor problem concerns finding the value of inf$\{\theta : \Delta_2(x) \ll x^{\theta} \text{ for all } x\geq 1\}$. More generally, the \textit{Piltz divisor problem} asks for the value of the real number $\alpha_k$ defined by $\alpha_k:=$inf$\{\theta : \Delta_k(x) \ll x^{\theta} \text{ for all } x\geq 1\}$. The current record for the smallest upper bound for $\alpha_2$ is $\alpha_2 \leq 131/416$, due to Huxley~\cite{huxley1,huxley2}. Kolesnik~\cite{kolesnik} has shown that $\alpha_3\leq 43/96$, and upper bounds for $\alpha_k$ for $k\geq 4$ have been obtained by Ivi\'{c}~\cite[Theorems~13.2~and~13.3]{ivic}. Ford~\cite[p.~567]{Ford} has improved these bounds for large $k$. The Lindel\"{o}f hypothesis is equivalent to the statement that $\alpha_k \leq 1/2$ for all $k\geq 2$~\cite[Theorem~13.4]{titchmarsh}. It is known that $\alpha_k \geq (k-1)/(2k)$ \cite[Theorem~12.6(B)]{titchmarsh}, and Titchmarsh~\cite[\S 12.4]{titchmarsh} conjectures that $\alpha_k=(k-1)/(2k)$. Thus, our results show for $k\geq 3$ that $|\Delta_k(x)|$ reaches its conjectured upper bound within a factor of $x^{\varepsilon}$ for all $x$ inside many subintervals of $[X,2X]$ of length $\gg X^{1-1/k}(\log X)^{-k^2-2}$ (under RH) or $\gg X^{1-\frac{1}{k}-\varepsilon}$ (under LH). The current best omega result is due to Soundararajan~\cite{sound}, who has shown, by refining ideas of Hafner~\cite{hafner}, that
\begin{equation*}
\Delta_k(x)=\Omega\Big((x\log x)^{\frac{k-1}{2k}}(\log\log x)^{\frac{k+1}{2k}(k^{2k/(k+1)} -1)}(\log\log\log x)^{-\frac{1}{2}-\frac{k-1}{4k}} \Big).
\end{equation*}

Though the order of magnitude of $\sup\{|\Delta_k(x)| : x\in [1,X]\}$ is not known, the average size of $|\Delta_k(x)|$ is more well-understood. Cram\'{e}r~\cite{cramer} has proved an asymptotic formula for the second moment of $\Delta_2$, while Tong~\cite{tong} has shown that, unconditionally for $k=3$ and assuming the Lindel\"{o}f hypothesis for $k\geq 4$,
\begin{equation}\label{Tong2ndmoment}
\int_X^{2X} \Big(\Delta_k(x) \Big)^2 \,dx \sim \int_X^{2X} \big( C_k x^{\frac{1}{2}-\frac{1}{2k}}\big)^2 \,dx
\end{equation}
as $X\rightarrow\infty$, where $C_k$ is defined by \eqref{Ckdef}. The error term in Cram\'{e}r's asymptotic formula has been examined more closely by Lau and Tsang~\cite{lautsang1,lautsang2,tsang2ndmoment} and Ge and Gonek~\cite{fan}. Tsang~\cite{tsang4thmoment} has proved asymptotic formulas for the third and fourth moments of $\Delta_2$. Zhai ~\cite{zhai1,zhai2} improved the bounds for the error terms in Tsang's asymptotic formulas, and also proved asymptotic formulas for the $m$th moments of $\Delta_2$ for $5 \leq m\leq 9$. Furthermore, Ivi\'{c}~\cite[Chapter~13]{ivic} has obtained bounds for higher moments of $\Delta_2$ and $\Delta_3$. We shall prove a conditional upper bound for the fourth moment of $\Delta_k(x)$ for all $k\geq 3$ (Theorem~\ref{thm:deltakfourthmmnt} below) and use it as one of the main ingredients in our proofs of Theorems~\ref{thm:deltaknosignchangeRH} and \ref{thm:deltaknosignchange}. For further interesting research on moments and various other properties of $\Delta_2$, see the informative survey~\cite{tsangsurvey}.

While moments of $\Delta_k$ have been extensively studied, much work has also been done towards understanding the mean square of $\Delta_k(x;h):=\Delta_k(x+h)-\Delta_k(x)$ with $h$ a parameter. Moments of $\Delta_k$ present data about the size of $\Delta_k(x)$, while moments of $\Delta_k(x;h)$ give information about the fluctuations of $\Delta_k$. Jutila~\cite{jutila} has proved that
\begin{equation*}
\frac{1}{X}\int_X^{2X} \Big( \Delta_2(x+h)-\Delta_2(x)\Big)^2 \,dx \asymp h \log^3 \left( \frac{\sqrt{X}}{h}\right)
\end{equation*}
for $X^{\varepsilon}\leq h \leq X^{\frac{1}{2}-\varepsilon}$, while Ivi\'{c}~\cite{ivic3} improved this result by proving an asymptotic formula when $1\ll h \leq \frac{1}{2}\sqrt{X}$. For $k\geq 3$, Ivi\'{c}~\cite{ivic2} has proved bounds for the mean square of $\Delta_k(x;h)$ that depend on an arbitrary real number $\delta\geq 0$ satisfying
\begin{equation}\label{deltamomentbound}
\int_0^{\tau} |\zeta(\tfrac{1}{2}+\delta+it)|^{2k} \,dt \ll_{\varepsilon} {\tau}^{1+\varepsilon} \text{ as }\tau\rightarrow \infty,\text{ for all fixed }\varepsilon>0
\end{equation}
(where we allow the implied constant to depend on $\varepsilon$). His theorem states that if $k\geq 3$ is a fixed integer and \eqref{deltamomentbound} holds for $\delta=0$, then
\begin{equation}\label{eqn:ivicbound1}
\frac{1}{X}\int_X^{2X} \Big( \Delta_k(x+h)-\Delta_k(x)\Big)^2 \,dx \ll_{k ,\varepsilon} h^{4/3} X^{ \varepsilon}
\end{equation}
for $X^{\varepsilon}\leq h \leq X^{1-\varepsilon}$, while if $\delta>0$ satisfies \eqref{deltamomentbound} and $\eta>0$ is a constant, then
\begin{equation}\label{eqn:ivicbound2}
\frac{1}{X}\int_X^{2X} \Big( \Delta_k(x+h)-\Delta_k(x)\Big)^2 \,dx \ll_{k ,\eta,\varepsilon} h^{2} X^{-\frac{1}{3}\eta+\varepsilon}
\end{equation}
for $X^{2\delta+\eta} \leq h\leq X^{1-\varepsilon}$. More recently, Cao, Tanigawa, and Zhai~\cite{CTZ} have proved that if \eqref{deltamomentbound} holds for  $\delta =0$, then 
\begin{equation}\label{eqn:CTZbound1}
  \frac{1}{X}\int_{X}^{2X}(\Delta_k(x+h)-\Delta_k(x))^2dx \ll
  \begin{cases}
  hX^{\varepsilon} & \text{if } X^{1-\frac{1}{k}-\varepsilon} \ll h \ll X \\
  X^{\varepsilon}(h + X^{1-\frac{3}{k}}) & \text{if } 1\ll h \ll X^{1-\frac{1}{k}-\varepsilon}.
\end{cases}
\end{equation}
They also prove for $k=3$ that (unconditionally)
\begin{equation}\label{eqn:CTZbound2}
 \frac{1}{X} \int_{X}^{2X}(\Delta_3(x+h)-\Delta_3(x))^2dx \ll 
\begin{cases}
  X^{\varepsilon}(h + X^{1/3}h^{1/3}+X^{5/9}) & \text{if } X^{4/9} \ll h \leq X \\
  X^{1/3+\varepsilon}h^{1/2} & \text{if } 1\ll h \ll X^{4/9}.
\end{cases}
\end{equation}

If $h$ is instead equal to $x/T$ with $T$ a parameter such that $2\leq T\leq X$, then an argument implicit in Milinovich and Turnage-Butterbaugh~\cite{micah} leads to
\begin{equation*}
\frac{1}{X}\int_X^{2X} \bigg(\Delta_k \bigg(x+\frac{x}{T}\bigg) -\Delta_k(x) \bigg)^2\,dx \ll \frac{X}{T}(\log T)^{k^2}
\end{equation*}
via a method of Selberg~\cite{selberg} under the assumption of RH (see also \cite[(1.2)]{lester}). This is close to the true order of magnitude, as Lester~\cite{lester} has shown for certain constants $b_k$ that
\begin{equation*}
\begin{split}
\frac{1}{X} \int_X^{2X} \bigg(\Delta_k \bigg(x+\frac{x^{1-\frac{1}{k}}}{L}\bigg) -\Delta_k(x) \bigg)^2\,dx = \frac{b_k X^{1-\frac{1}{k}}}{L} (\log L)^{k^2-1} + O\bigg(\frac{ X^{1-\frac{1}{k}}}{L} (\log L)^{k^2-2}\bigg) 
\end{split}
\end{equation*}
unconditionally for $k=3$ and $2\leq L \ll X^{\frac{1}{12}-\varepsilon}$, and assuming LH for $k\geq 3$ and $2\leq L \ll X^{\frac{1}{k(k-1)}-\varepsilon}$. This agrees with a conjecture of Keating, Rodgers, Roditty-Gershon, and Rudnick~\cite{krrr}, who studied the analogous problem in function fields and used their results to predict for each integer $k\geq 3$ that if $h=X^{\vartheta}$ with $\vartheta$ a fixed real number in $(0,1-1/k)$, then
\begin{equation}\label{eqn:krrr}
\frac{1}{X}\int_X^{2X} \Big( \Delta_k(x+h)-\Delta_k(x)\Big)^2 \,dx \sim a_k\mathcal{P}_k(\vartheta) H(\log X)^{k^2-1}
\end{equation}
as $X\rightarrow \infty$, where $a_k$ is a constant depending only on $k$ and $\mathcal{P}_k$ is a specific piecewise polynomial of degree $k^2-1$. Through their conjecture, Keating et al. have found an interesting connection between the mean square of $\Delta_k(x;h)$ and averages of coefficients of characteristic polynomials of random matrices. Bettin and Conrey~\cite{bettinconrey} have shown that the conjecture \eqref{eqn:krrr} of Keating et al. would follow from a (yet unproved) conjecture for moments of $\zeta(s)$.

We refine the argument of Milinovich and Turnage-Butterbaugh~\cite{micah} and combine the method of Selberg~\cite{selberg} with a lemma due to Saffari and Vaughan~\cite{saffarivaughan} to bound the mean square of $\Delta_k(x;h)$ with the parameter $h$ independent of the variable $x$. Our results improve Ivi\'{c}'s~\cite{ivic2} bounds \eqref{eqn:ivicbound1} and \eqref{eqn:ivicbound2} for all $h$, and also improve Cao, Tanigawa, and Zhai's bounds \eqref{eqn:CTZbound1} and \eqref{eqn:CTZbound2} for small enough $h$. We will apply our bounds to our proofs of Theorems~\ref{thm:deltaknosignchangeRH} and \ref{thm:deltaknosignchange}.

\begin{thm}\label{thm:dkinshortintervalsnosup}
Let $k\geq 3$ be an integer, and let $\delta\geq 0$ be a real number satisfying \eqref{deltamomentbound}. Suppose further that $\varepsilon$ is an arbitrarily small positive constant. If $\,1\leq h\leq X/8$, then
$$
\frac{1}{X}\int_X^{2X} \Big( \Delta_k(x+h)-\Delta_k(x)\Big)^2 \,dx \ll h X^{2\delta+\varepsilon},
$$
with implied constant depending only on the implied constant in \eqref{deltamomentbound}.
\end{thm}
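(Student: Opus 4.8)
\emph{Strategy.} The plan is to combine Selberg's contour method~\cite{selberg} with a Poincar\'e-type lemma of Saffari and Vaughan~\cite{saffarivaughan}, refining the argument of Milinovich and Turnage-Butterbaugh~\cite{micah}, who treated the self-similar shift $h=x/T$ (and who used RH and a sum over zeros where we shall use a contour on the line $\operatorname{Re}s=\tfrac12+\delta$ together with \eqref{deltamomentbound}). Since $\Delta_k(x+h)-\Delta_k(x)$ is discontinuous and a sharp Perron formula is lossy on average, the first step is to smooth: for a small parameter $y$ (one may take $y=1$) set $G(w):=\int_w^{w+y}\Delta_k(t)\,dt$, which is continuous. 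As $\int_x^{x+y}\big(\Delta_k(t+h)-\Delta_k(t)\big)\,dt=G(x+h)-G(x)$, the Saffari--Vaughan lemma gives an inequality of the shape
\begin{equation*}
\int_X^{2X}\big(\Delta_k(x+h)-\Delta_k(x)\big)^2\,dx \;\ll\; \frac{1}{y^2}\int_X^{2X}\big(G(x+h)-G(x)\big)^2\,dx \;+\; \frac{1}{y}\int_0^{y}\int_{X}^{3X}\big(\Delta_k(t+u)-\Delta_k(t)\big)^2\,dt\,du.
\end{equation*}
For $y=1$ the last term is $\ll X^{1+\varepsilon}\ll hX^{1+2\delta+\varepsilon}$, by the trivial pointwise bound $\Delta_k(t+u)-\Delta_k(t)\ll u\,t^{\varepsilon}$ for $0\le u\le 1$. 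Hence it suffices to show $\int_X^{2X}\big(G(x+h)-G(x)\big)^2\,dx\ll hX^{1+2\delta+\varepsilon}$.

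\emph{Contour representation and the diagonal term.} Write $G(w)=\Delta_{k,1}(w+y)-\Delta_{k,1}(w)$ with $\Delta_{k,1}(v):=\int_0^v\Delta_k(t)\,dt$; since $\sum_{n\le v}d_k(n)(v-n)=\frac{1}{2\pi i}\int_{(c)}\zeta^k(s)\frac{v^{s+1}}{s(s+1)}\,ds$ for $c>1$, moving the contour past the pole at $s=1$ (which cancels the subtracted residue) gives $\Delta_{k,1}(v)=\frac{1}{2\pi i}\int_{(\frac12+\delta)}\zeta^k(s)\frac{v^{s+1}}{s(s+1)}\,ds$. (For large $k$ this integral need not converge when $\delta$ is small; one instead integrates $\Delta_k$ a further $\nu\asymp k$ times before shifting, each extra integration inserting a pole factor $1/(s+j)$ so that $\zeta^k(\tfrac12+\delta+it)$ is dominated by the kernel's polynomial decay, and being undone afterwards by an additional application of the Saffari--Vaughan inequality.) Truncating at a height $T$ equal to a fixed power of $X$, with negligible tail, one obtains
\begin{equation*}
G(x+h)-G(x)\;=\;\frac{1}{2\pi}\int_{-T}^{T}\zeta^k\!\Big(\tfrac12+\delta+it\Big)\,K(x,t)\,dt\;+\;O\!\big(X^{-10}\big),
\end{equation*}
where $K$ is an explicit smooth kernel with $\int_X^{2X}|K(x,t)|^2\,dx\ll X^{2\delta}\min\!\big(h^2,\,X^2/(1+|t|)^2\big)$ and with extra decay once $|t|\gg X/h$. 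Squaring and integrating over $x\in[X,2X]$, the diagonal contribution $t_1=t_2$ is thus $\ll X^{2\delta}\int_{-T}^{T}|\zeta(\tfrac12+\delta+it)|^{2k}\min\!\big(h^2,X^2/(1+|t|)^2\big)\,dt$; splitting the integral at $|t|\asymp X/h$ and applying \eqref{deltamomentbound} dyadically bounds this by $\ll h^{2}X^{2\delta}(X/h)^{1+\varepsilon}\ll hX^{1+2\delta+\varepsilon}$.

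\emph{Off-diagonal terms.} For $t_1\ne t_2$ the $x$-integration supplies the factor $\int_X^{2X}x^{i(t_1-t_2)}\,dx\ll X/\max(1,|t_1-t_2|)$, while the shift factors inside $K(x,t_1)\overline{K(x,t_2)}$ provide further cancellation that confines the effective range of $(t_1,t_2)$ to a window of length $\asymp X/h$ — for the un-smoothed difference this is literally a Fej\'er-type kernel in $t_1+t_2$ of width $\asymp X/h$ and mass $\asymp h^2$. Using $|\zeta^k(\tfrac12+\delta+it_1)\zeta^k(\tfrac12+\delta+it_2)|\le\tfrac12\big(|\zeta(\tfrac12+\delta+it_1)|^{2k}+|\zeta(\tfrac12+\delta+it_2)|^{2k}\big)$, applying \eqref{deltamomentbound} on that short window, and integrating the $1/\max(1,|t_1-t_2|)$ weight (which costs only $\ll\log X$), one gets $\ll hX^{1+2\delta+\varepsilon}$ for the off-diagonal terms as well. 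Combining the three steps proves the theorem.

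\emph{Main obstacle.} The heart of the matter is extracting a saving of a \emph{full} power $X/h$ — not merely $(X/h)^{1-\varepsilon}$, and not a power of $h$ alone — uniformly throughout $1\le h\le X/8$. This requires simultaneously an efficient treatment of the off-diagonal frequencies via the cancellation in the shift factors, and the use of \eqref{deltamomentbound} only on windows that are short relative to their distance from the origin, where the bound $\ll(\text{length})^{1+\varepsilon}$ is genuinely available. The latter is what forces the truncation height $T$ to be kept moderate, which is precisely why the initial smoothing — and hence the Saffari--Vaughan lemma — is indispensable for a shift $h$ that does not scale with $x$; the remaining effort is the bookkeeping needed to balance $y$, $T$, the number of integrations, and the size of $\delta$ so that every error term stays $\ll hX^{1+2\delta+\varepsilon}$.
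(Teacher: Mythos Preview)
Your route differs from the paper's in two essential respects. The paper first treats the \emph{multiplicative} shift $x\mapsto x(1+1/T)$: after the substitution $x=e^\tau$ this becomes an additive shift in $\tau$, and Lemma~\ref{lemma:perron} exhibits $e^{-\tau(\frac12+\delta)}\big(\Delta_k^*(e^{\tau+\kappa})-\Delta_k^*(e^\tau)\big)$ as literally the inverse Fourier transform of an explicit function on the line $\operatorname{Re}s=\tfrac12+\delta$. Plancherel's theorem then produces the \emph{exact identity} \eqref{eqn:selberg} between the weighted mean square and $\int_0^\infty|\zeta(\tfrac12+\delta+it)|^{2k}\cdot(\text{kernel})\,dt$---no smoothing, no truncation, and no off-diagonal whatsoever. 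Bounding the kernel and applying \eqref{deltamomentbound} dyadically (Lemma~\ref{lem:selberg}) yields $\int_0^X|\Delta_k(x+\beta x)-\Delta_k(x)|^2\,dx\ll\beta^{1-\varepsilon}X^{2+2\delta}$. Only then does Saffari--Vaughan enter, and for a purpose different from yours: Lemma~\ref{lem:saffarivaughan} converts the multiplicative shift $x(1+\beta)$ to the additive shift $x+h$ via $\int_{X/2}^X|f(t+h)-f(t)|^2\,dt\le\frac{2X}{h}\int_0^{8h/X}\int_0^X|f(t+\beta t)-f(t)|^2\,dt\,d\beta$, and integrating the previous bound in $\beta$ finishes the proof in a few lines.

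Your direct approach---integrate $\Delta_k$ once or several times, square out the truncated contour integral, and split into diagonal and off-diagonal---can probably be pushed through, but it is considerably more laborious, and your off-diagonal sketch is not correct as stated: the oscillation from $\int_X^{2X}K(x,t_1)\overline{K(x,t_2)}\,dx$ lives in $t_1-t_2$, not $t_1+t_2$, and no Fej\'er-type kernel in $t_1+t_2$ arises here; one must instead bound the double integral $\iint|\zeta|^{k}|\zeta|^{k}/\max(1,|t_1-t_2|)\,dt_1\,dt_2$ across several regimes of $|t_j|$ relative to $X/h$ and $X/y$, tracking the transition in the size of $K$ at each threshold. Also, the inequality in your first display is not the Saffari--Vaughan lemma but the elementary unsmoothing identity obtained by writing $\Delta_k(x+h)-\Delta_k(x)=\tfrac1y(G(x+h)-G(x))-\tfrac1y\int_0^y\big[(\Delta_k(x+h+u)-\Delta_k(x+h))-(\Delta_k(x+u)-\Delta_k(x))\big]\,du$. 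What the paper's approach buys is the complete elimination of all this bookkeeping via Plancherel, at the modest price of first handling the ``wrong'' (multiplicative) shift and then converting; what your approach would buy, if the off-diagonal analysis were carried out in full, is an argument that never leaves the additive shift---but the version you have written is a plan rather than a proof.
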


A theorem of Heath-Brown~\cite{heathbrown} (see also \cite[\S 7.22]{titchmarsh}) implies that $\delta=1/12$ satisfies \eqref{deltamomentbound} with $k=3$ and $\delta=1/8$ satisfies \eqref{deltamomentbound} with $k=4$. Various $\delta$ satisfying \eqref{deltamomentbound} for other $k$ may be deduced from Theorem~8.4 of Ivi\'{c}~\cite{ivic}, and Ford~\cite[p.~567]{Ford} has found smaller $\delta$ than these for large $k$. Using these values for $\delta$ in the application of Theorem~\ref{thm:dkinshortintervalsnosup} in Section~\ref{section:main} leads to an unconditional proof of the existence of a subinterval of $[X,2X]$ with length $X^{1-\frac{1}{k}-2\delta-\varepsilon}$ such that $|\Delta_k(x)|>(\frac{1}{2}C_k-\varepsilon)x^{\frac{1}{2}-\frac{1}{2k}}$ for all $x$ in the subinterval. However, finding a nontrivial lower bound for the number of such subintervals using the methods in Section~\ref{section:main} requires a strong upper bound for the fourth moment of $\Delta_k(x)$. The unconditional existence of many such subintervals for $k=3$ has been recently proved by Cao, Tanigawa and Zhai \cite{CTZ} (see the paragraph containing \eqref{HBbound} below for details). A well-known fact is that LH is equivalent to the statement that $\delta=0$ satisfies \eqref{deltamomentbound} for all $k$~\cite[Theorem~13.2]{titchmarsh}. From this and the aforementioned theorem of Heath-Brown~\cite{heathbrown} for $k=3$, we deduce the following two corollaries of Theorem~\ref{thm:dkinshortintervalsnosup}.

\begin{cor}\label{cor:d3inshortintervals}
Suppose that $\varepsilon$ is an arbitrarily small positive constant. If $\,1\leq h\leq X/8$, then (unconditionally)
$$
\frac{1}{X}\int_X^{2X} \Big( \Delta_3(x+h)-\Delta_3(x)\Big)^2 \,dx \ll_{\varepsilon} h X^{\frac{1}{6}+\varepsilon}.
$$
\end{cor}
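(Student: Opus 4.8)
The plan is to read off Corollary~\ref{cor:d3inshortintervals} as the special case $k = 3$, $\delta = \tfrac{1}{12}$ of Theorem~\ref{thm:dkinshortintervalsnosup}. With this choice one has $2\delta = \tfrac16$, so the bound
$$\frac{1}{X}\int_X^{2X} \bigl( \Delta_3(x+h)-\Delta_3(x)\bigr)^2 \,dx \ll h X^{2\delta+\varepsilon}$$
furnished by Theorem~\ref{thm:dkinshortintervalsnosup} becomes precisely $\ll_{\varepsilon} h X^{\frac16+\varepsilon}$, uniformly in the range $1 \le h \le X/8$ that Theorem~\ref{thm:dkinshortintervalsnosup} already supplies.

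To justify the substitution one must check that $\delta = \tfrac{1}{12}$ is admissible in~\eqref{deltamomentbound} when $k = 3$, i.e.\ that
$$\int_0^{\tau} \Bigl|\zeta\Bigl(\tfrac{1}{2}+\tfrac{1}{12}+it\Bigr)\Bigr|^{6}\,dt \ll_{\varepsilon} \tau^{1+\varepsilon} \qquad (\tau\to\infty).$$
As noted in the remarks following Theorem~\ref{thm:dkinshortintervalsnosup}, this is a consequence of the theorem of Heath-Brown~\cite{heathbrown} (see also \cite[\S 7.22]{titchmarsh}): one starts from Heath-Brown's twelfth-power moment estimate on the critical line, $\int_0^T |\zeta(\tfrac12+it)|^{12}\,dt \ll_{\varepsilon} T^{2+\varepsilon}$, and passes to the abscissa $\sigma = \tfrac12+\tfrac1{12}$ by a standard mean-value (convexity) interpolation, which lowers the exponent on $\tau$ to $1+\varepsilon$.

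There is essentially no obstacle here: the analytic substance of the estimate lies entirely in Theorem~\ref{thm:dkinshortintervalsnosup} (the combination of Selberg's method with the Saffari--Vaughan lemma) and in Heath-Brown's classical bound, while Corollary~\ref{cor:d3inshortintervals} merely inserts the value $\delta = \tfrac{1}{12}$ for $k = 3$ recorded after the statement of Theorem~\ref{thm:dkinshortintervalsnosup}. The only points needing any care are routine bookkeeping: absorbing the $\varepsilon$ from~\eqref{deltamomentbound} together with the $\varepsilon$ appearing in Theorem~\ref{thm:dkinshortintervalsnosup} into a single arbitrarily small positive constant, and checking that the hypothesis $1 \le h \le X/8$ of the corollary is identical to that of the theorem.
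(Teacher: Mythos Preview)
Your proposal is correct and follows exactly the paper's own argument: the paper deduces Corollary~\ref{cor:d3inshortintervals} directly from Theorem~\ref{thm:dkinshortintervalsnosup} with $k=3$ and $\delta=\tfrac{1}{12}$, citing Heath-Brown's theorem to verify that this $\delta$ satisfies \eqref{deltamomentbound}. Your additional remark explaining how the twelfth-moment estimate yields the required sixth-moment bound at $\sigma=\tfrac12+\tfrac1{12}$ via convexity is a helpful elaboration of what the paper leaves implicit.
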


\begin{cor}\label{cor:dkinshortintervalsLH}
Assume the Lindel\"{o}f hypothesis. Let $k\geq 3$ be a fixed integer, and suppose that $\varepsilon$ is an arbitrarily small positive constant. If $\,1\leq h\leq X/8$, then
$$
\frac{1}{X}\int_X^{2X} \Big( \Delta_k(x+h)-\Delta_k(x)\Big)^2 \,dx \ll_{k ,\varepsilon} h X^{ \varepsilon}.
$$
\end{cor}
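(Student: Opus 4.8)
The plan is to obtain Corollary~\ref{cor:dkinshortintervalsLH} as an immediate specialization of Theorem~\ref{thm:dkinshortintervalsnosup} to the choice $\delta = 0$. The only thing that needs to be checked is that, under the Lindel\"of hypothesis, $\delta = 0$ is an admissible value in \eqref{deltamomentbound} for the given $k$.

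First I would invoke the classical equivalence that LH is equivalent to the assertion that
\[
\int_0^{\tau} |\zeta(\tfrac12 + it)|^{2k}\,dt \ll_{k,\varepsilon} \tau^{1+\varepsilon}
\]
for every fixed integer $k \geq 1$ and every fixed $\varepsilon > 0$; this is \cite[Theorem~13.2]{titchmarsh}. In particular, assuming LH, the exponent $\delta = 0$ satisfies \eqref{deltamomentbound} for the fixed $k$ in the statement, with an implied constant depending only on $k$ and $\varepsilon$. With this in hand, I would apply Theorem~\ref{thm:dkinshortintervalsnosup} with $\delta = 0$: for $1 \leq h \leq X/8$ it yields
\[
\frac{1}{X}\int_X^{2X} \bigl(\Delta_k(x+h) - \Delta_k(x)\bigr)^2\,dx \ll h X^{2\cdot 0 + \varepsilon} = h X^{\varepsilon},
\]
where the implied constant depends only on the implied constant in \eqref{deltamomentbound}, hence only on $k$ and $\varepsilon$. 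This is exactly the claimed bound. (Corollary~\ref{cor:d3inshortintervals} is proved in the same way but unconditionally, using instead the admissible value $\delta = 1/12$ for $k=3$ furnished by Heath-Brown's fourth-moment estimate for $\zeta(\tfrac12 + \delta + it)$, which turns the conclusion of Theorem~\ref{thm:dkinshortintervalsnosup} into $h X^{1/6+\varepsilon}$.)

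There is no real obstacle here: all of the analytic content already resides in Theorem~\ref{thm:dkinshortintervalsnosup}, whose proof—via Selberg's method combined with the Saffari--Vaughan lemma—is where the work lies. The purpose of this corollary is simply to record the consequence of that theorem under LH, packaged in the form needed for the applications to Theorems~\ref{thm:deltaknosignchange} and \ref{thm:delta3nosignchange}.
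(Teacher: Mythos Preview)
Your proposal is correct and matches the paper's own argument exactly: the paper derives Corollary~\ref{cor:dkinshortintervalsLH} by noting that LH makes $\delta=0$ admissible in \eqref{deltamomentbound} (via \cite[Theorem~13.2]{titchmarsh}) and then invoking Theorem~\ref{thm:dkinshortintervalsnosup}. Your parenthetical on Corollary~\ref{cor:d3inshortintervals} also coincides with the paper's treatment.
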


Corollary~\ref{cor:d3inshortintervals} improves \eqref{eqn:CTZbound2} for $h \ll X^{1/3}$, while Corollary~\ref{cor:dkinshortintervalsLH} recovers \eqref{eqn:CTZbound1} for $k=3$ and improves \eqref{eqn:CTZbound1} for $k\geq 4$ and $h \ll X^{1-\frac{3}{k}}$.

By refining a method of Soundararajan~\cite{sound2}, Harper~\cite{harper} has proved that the Riemann hypothesis implies
\begin{equation*}
\int_0^{\tau} |\zeta(\tfrac{1}{2}+it)|^{2k} \,dt \ll_k \tau (\log \tau)^{k^2} \ \text{ as }\tau\rightarrow \infty
\end{equation*}
for all positive integers $k$. We may use this in place of \eqref{deltamomentbound} in our proof of Theorem~\ref{thm:dkinshortintervalsnosup} and arrive at the following theorem. We will use this to prove Theorem~\ref{thm:deltaknosignchangeRH}.

\begin{thm}\label{thm:dkinshortintervalsRH}
Assume the Riemann hypothesis. If $k\geq 3$ is a fixed integer and $\,1\leq h\leq X/8$, then
$$
\frac{1}{X}\int_X^{2X} \Big( \Delta_k(x+h)-\Delta_k(x)\Big)^2 \,dx \ll_{k}  h \log^{k^2}\left( \frac{X}{h}\right).
$$
\end{thm}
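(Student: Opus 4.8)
The plan is to rerun the proof of Theorem~\ref{thm:dkinshortintervalsnosup}, now moving the contour of integration all the way to the critical line and inserting Harper's $2k$th moment bound in place of \eqref{deltamomentbound}. As there, one first bounds the mean square of the multiplicatively perturbed difference $\Delta_k(x+\delta x)-\Delta_k(x)$ for a small parameter $\delta$ --- the quantity to which Selberg's method applies directly --- and then passes to the additively perturbed difference $\Delta_k(x+h)-\Delta_k(x)$ via the lemma of Saffari and Vaughan~\cite{saffarivaughan}. Fix $0<\delta\le\tfrac18$ and put $y=x(1+\delta)$. Applying a truncated Perron formula to $\sum_{n\le y}d_k(n)-\sum_{n\le x}d_k(n)$ with abscissa $\sigma=1+1/\log X$ and height $T_0=X^3$, and moving the contour to $\Re s=\tfrac12$, one crosses only the pole of order $k$ of $\zeta^k(s)(y^s-x^s)/s$ at $s=1$, whose residue is precisely the difference of the main terms in \eqref{deltakdef}. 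Since RH gives $\zeta(s)\ll_\varepsilon(2+|\Im s|)^\varepsilon$ throughout $\Re s\ge\tfrac12$, the horizontal segments together with the Perron truncation error contribute $\ll X^{-1}$ uniformly for $x\in[X,2X]$ (the terms with $n$ within $y/T_0$ of $y$, which occupy a set of $x$ of measure $\ll X/T_0$, are absorbed into the error using $d_k(n)\ll_\varepsilon n^\varepsilon$, exactly as in the proof of Theorem~\ref{thm:dkinshortintervalsnosup}). This yields
\begin{equation*}
\Delta_k(x+\delta x)-\Delta_k(x)=\frac{x^{1/2}}{2\pi}\int_{-T_0}^{T_0}\zeta^k\!\left(\tfrac12+it\right)\frac{(1+\delta)^{1/2+it}-1}{\tfrac12+it}\,x^{it}\,dt+O\!\left(X^{-1}\right),
\end{equation*}
so, writing $a_\delta(t):=\zeta^k(\tfrac12+it)\frac{(1+\delta)^{1/2+it}-1}{1/2+it}$ and applying Plancherel's theorem in the variable $v=\log x$ (the main term above being $\tfrac{1}{2\pi}x^{1/2}$ times $\int_{-T_0}^{T_0}a_\delta(t)e^{itv}\,dt$),
\begin{equation*}
\int_X^{2X}\bigl|\Delta_k(x+\delta x)-\Delta_k(x)\bigr|^2\,dx\ \ll\ X^2\int_{-T_0}^{T_0}\bigl|\zeta(\tfrac12+it)\bigr|^{2k}\left|\frac{(1+\delta)^{1/2+it}-1}{\tfrac12+it}\right|^2 dt\ +\ 1 .
\end{equation*}

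For the $t$-integral I would use $\bigl|\tfrac{(1+\delta)^{1/2+it}-1}{1/2+it}\bigr|^2\ll\min(\delta^2,t^{-2})$ to bound it by $\delta^2\int_{|t|\le 1/\delta}|\zeta(\tfrac12+it)|^{2k}\,dt+\int_{1/\delta<|t|\le T_0}|\zeta(\tfrac12+it)|^{2k}t^{-2}\,dt$, and then insert Harper's bound $\int_0^{\tau}|\zeta(\tfrac12+it)|^{2k}\,dt\ll_k\tau(\log(\tau+2))^{k^2}$: the first term is immediately $\ll_k\delta(\log\tfrac1\delta)^{k^2}$, and a partial summation against the same bound shows the second term is $\ll_k\delta(\log\tfrac1\delta)^{k^2}$ as well. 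Therefore, for $0<\delta\le\tfrac18$,
\begin{equation}\label{eqn:proppropbd}
\int_X^{2X}\bigl|\Delta_k(x+\delta x)-\Delta_k(x)\bigr|^2\,dx\ \ll_k\ X^2\,\delta\,\bigl(\log\tfrac1\delta\bigr)^{k^2}\ +\ 1 ,
\end{equation}
and the same bound holds with the interval of integration replaced by $[X/2,3X]$.

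It remains to transfer \eqref{eqn:proppropbd} to a shift $h$ independent of $x$. For $1\le h\le X/8$, the lemma of Saffari and Vaughan~\cite{saffarivaughan}, applied to $G=\Delta_k$, bounds $\int_X^{2X}|G(x+h)-G(x)|^2\,dx$ by a constant times $\frac{X}{h}\int_0^{Bh/X}\bigl(\int_{X/2}^{3X}|G(x+\delta x)-G(x)|^2\,dx\bigr)\,d\delta$ for some absolute constant $B$. Since $h/X\le\tfrac18$ we have $\int_0^{Bh/X}\delta(\log\tfrac1\delta)^{k^2}\,d\delta\ll_k(h/X)^2(\log\tfrac Xh)^{k^2}$, so \eqref{eqn:proppropbd} gives $\int_X^{2X}|\Delta_k(x+h)-\Delta_k(x)|^2\,dx\ll_k Xh(\log\tfrac Xh)^{k^2}$, which is the assertion after division by $X$.

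Every step except the use of Harper's moment bound is identical to the proof of Theorem~\ref{thm:dkinshortintervalsnosup}, so I do not anticipate a new obstacle; the only part needing care --- no more than in that theorem --- is the opening reduction, where one must arrange $T_0$ (and, if one prefers, a smooth Perron weight) so that all error terms are negligible after squaring and integrating in $x$, and must observe that enlarging $\int_X^{2X}$ to the full $v$-line in the Plancherel step costs no extra logarithm. The one genuinely new ingredient is that Harper's estimate supplies the clean bound $\tau(\log\tau)^{k^2}$ for the $2k$th moment in place of the $\tau^{1+\varepsilon}$ of \eqref{deltamomentbound}, and this is exactly what upgrades the $X^{\varepsilon}$ loss of Theorem~\ref{thm:dkinshortintervalsnosup} to the clean factor $\log^{k^2}(X/h)$.
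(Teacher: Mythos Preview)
Your proposal is correct and takes essentially the same approach as the paper: Selberg's Plancherel argument on the critical line, Harper's conditional $2k$th moment bound in place of \eqref{deltamomentbound}, and then the Saffari--Vaughan lemma to pass from multiplicative to additive shifts. The only adjustment needed is that Lemma~\ref{lem:saffarivaughan} as stated requires the inner integral over $[0,X]$ rather than $[X/2,3X]$; the paper obtains this by summing the analogue of your bound \eqref{eqn:proppropbd} dyadically over $X, X/2, X/4,\dots$ (the $X^2$ scaling makes the sum geometric), and you should do the same.
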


By the conjecture \eqref{eqn:krrr} of Keating et al., we expect that
$$
\frac{1}{X}\int_X^{2X} \Big( \Delta_k(x+h)-\Delta_k(x)\Big)^2 \,dx \ll_{k}  h (\log X)^{k^2-1}.
$$
If we assume this and LH, then we can deduce the conclusion of Theorem~\ref{thm:deltaknosignchangeRH} with the length of the subintervals improved to $c_0\varepsilon X^{1-1/k} (\log X)^{-k^2-1}$.

More than giving intervals on which $\Delta_k(x)$ does not change sign, Theorems~\ref{thm:deltaknosignchangeRH} and \ref{thm:deltaknosignchange} provide a lower bound for the measure of the set of all $x\in[X,2X]$ for which $|\Delta_k(x)|>(\frac{1}{2}C_k-\varepsilon)x^{\frac{1}{2}-\frac{1}{2k}}$. Heath-Brown and Tsang~\cite{hbtsang} do this for $k=2$ and show that $|\Delta_2(x)|>(\frac{1}{2}C_2-\varepsilon)x^{1/4}$ on a subset of $[X,2X]$ whose measure is $\gg X$. To deduce this lower bound for the measure, Heath-Brown and Tsang use an estimate for the fourth moment of $\Delta_2$ due to Tsang~\cite{tsang4thmoment}, who applied the Erd\"{o}s-Tur\'{a}n inequality and van der Corput's bound for exponential sums to prove the asymptotic formula
\begin{equation*}
\frac{1}{X}\int_2^X \big( \Delta_2(x)\big)^4\,dx = \frac{3}{64\pi^4} \!\!\! \sum_{\substack{1\leq n,m,k,\ell <\infty \\ \sqrt{n}+\sqrt{m} = \sqrt{k}+\sqrt{\ell}}} \!\!\!\!\!\! \frac{d_2(n)d_2(m) d_2(k)d_2(\ell) }{ (nmk\ell)^{3/4}} \,X + O\Big( X^{\frac{22}{23}+\varepsilon}\Big).
\end{equation*}

We combine Tsang's technique with the method of Lester~\cite{lester} to find a conditional bound for the fourth moment of $\Delta_k$. We shall apply this bound in our proofs of Theorems~\ref{thm:deltaknosignchangeRH} and \ref{thm:deltaknosignchange} to deduce a lower bound for the number of disjoint subintervals on which $|\Delta_k(x)|>(\frac{1}{2}C_k-\varepsilon)x^{\frac{1}{2}-\frac{1}{2k}}$.

\begin{thm}\label{thm:deltakfourthmmnt}
Assume the Lindel\"{o}f hypothesis, and let $\varepsilon>0$ be an arbitrarily small positive constant. If $k\geq 3$ and $X\geq 1$, then
\begin{equation*}
\frac{1}{X}\int_X^{2X} \big( \Delta_k(x)\big)^4\,dx \ll_{k,\varepsilon} X^{2-\frac{1}{k-1}+\varepsilon}.
\end{equation*}
\end{thm}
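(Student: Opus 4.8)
The plan is to imitate Tsang's proof of the asymptotic formula for $\frac1X\int\Delta_2^4$ in \cite{tsang4thmoment}, using Lester's \cite{lester} analysis of $\Delta_k$ under the Lindel\"of hypothesis to supply the inputs that replace van der Corput's exponential–sum bounds when $k\ge 3$. Throughout, $\varepsilon>0$ is arbitrarily small and implied constants may depend on $k$ and $\varepsilon$. First I would record a truncated Voronoï–type formula: there is a constant $c_k>0$ so that for $X\le x\le 2X$ and a truncation parameter $N$,
\[
\Delta_k(x)=M_N(x)+E_N(x),\qquad M_N(x):=c_k\,x^{\frac{k-1}{2k}}\sum_{n\le N}\frac{d_k(n)}{n^{\frac{k+1}{2k}}}\cos\!\Big(2\pi k(nx)^{1/k}-\tfrac{(k-1)\pi}{4}\Big),
\]
where, under the Lindel\"of hypothesis and for $N$ in a suitable range, the tail obeys a mean–square bound of the shape $\int_X^{2X}E_N(x)^2\,dx\ll X^{2-\frac1k+\varepsilon}N^{-1/k}$ (this is where Lester's machinery, ultimately resting on Harper's bound for $\int_0^T|\zeta(\tfrac12+it)|^{2k}\,dt$, enters, since the classical unconditional error is too weak for $k\ge3$). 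I would choose $N=X^{1/(k-1)}$. Since $(\Delta_k)^4\ll M_N^4+E_N^4$, it suffices to bound $\int_X^{2X}M_N^4$ and $\int_X^{2X}E_N^4$ separately. For the latter, bounding the sum trivially gives $\sup_{[X,2X]}|M_N|\ll X^{\frac{k-1}{2k}}N^{\frac{k-1}{2k}+\varepsilon}=X^{1/2+\varepsilon}$, and the Lindel\"of hypothesis gives $\sup_{[X,2X]}|\Delta_k|\ll X^{1/2+\varepsilon}$ (e.g.\ \cite[Ch.~13]{ivic}), hence $\sup_{[X,2X]}|E_N|\ll X^{1/2+\varepsilon}$; together with the mean–square bound and the choice of $N$ this yields $\int_X^{2X}E_N^4\le\big(\sup|E_N|\big)^2\int_X^{2X}E_N^2\ll X^{1+\varepsilon}\,X^{2-\frac1{k-1}+\varepsilon}$, which is within the target.

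\textbf{The diagonal.} Expanding $M_N(x)^4$ and converting the product of four cosines into a sum of cosines with phases $2\pi k x^{1/k}\mu(\mathbf n,\sigma)+c_\sigma$, where $\mathbf n=(n_1,\dots,n_4)$, $\sigma=(\sigma_1,\dots,\sigma_4)\in\{\pm1\}^4$, and $\mu(\mathbf n,\sigma)=\sigma_1 n_1^{1/k}+\cdots+\sigma_4 n_4^{1/k}$, reduces $\int_X^{2X}M_N^4$ to a sum over $n_i\le N$ of $\frac{d_k(n_1)\cdots d_k(n_4)}{(n_1n_2n_3n_4)^{(k+1)/(2k)}}$ times oscillatory integrals $\int_X^{2X}x^{\frac{2(k-1)}{k}}\cos(2\pi k x^{1/k}\mu+c_\sigma)\,dx$. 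The diagonal terms are those with $\mu=0$; since all $n_i>0$ this forces two of the signs to be $+$ and two to be $-$, i.e.\ $n_{i_1}^{1/k}+n_{i_2}^{1/k}=n_{i_3}^{1/k}+n_{i_4}^{1/k}$, and each such integral is $\asymp X^{\frac{3k-2}{k}}$. One then checks that
\[
\sum_{\substack{n_1,n_2,n_3,n_4\ge 1\\ n_1^{1/k}+n_2^{1/k}=n_3^{1/k}+n_4^{1/k}}}\frac{d_k(n_1)d_k(n_2)d_k(n_3)d_k(n_4)}{(n_1n_2n_3n_4)^{(k+1)/(2k)}}<\infty,
\]
which I would do by parametrizing the nontrivial solutions through their common $k$-th--power--free part $m$, writing $n_i=m x_i^k$ with $x_1+x_2=x_3+x_4$, and summing absolutely (the exponents cooperate because $(k+1)/2\ge 2$ for $k\ge3$). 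Hence the diagonal contributes $\ll X^{\frac{3k-2}{k}+\varepsilon}=X\cdot X^{2-\frac2k+\varepsilon}$, and since $2-\frac2k\le 2-\frac1{k-1}$ this lies within the target. This is the ``expected'' main term, consistent with \eqref{Tong2ndmoment} and a Gaussian heuristic; for $k\ge4$ it is in fact smaller than the error terms below, so we only keep it as an upper bound.

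\textbf{The off-diagonal, and the main obstacle.} For $\mu\ne0$ the phase $2\pi k x^{1/k}\mu$ varies by $\asymp X^{1/k}|\mu|$ across $[X,2X]$, so the first–derivative test gives $\int_X^{2X}x^{2(k-1)/k}\cos(2\pi k x^{1/k}\mu+c)\,dx\ll X^{\frac{3k-3}{k}}|\mu|^{-1}$ once $|\mu|\gg X^{-1/k}$, and the trivial bound $\ll X^{\frac{3k-2}{k}}$ otherwise. Everything thus comes down to proving
\[
\sum_{\substack{n_i\le N,\ \mu\ne0\\ |\mu|\le X^{-1/k}}}\frac{\prod_i d_k(n_i)}{\big(\prod_i n_i\big)^{\frac{k+1}{2k}}}\ \ll\ X^{\frac{k-2}{k(k-1)}+\varepsilon}
\qquad\text{and}\qquad
\sum_{\substack{n_i\le N\\ |\mu|>X^{-1/k}}}\frac{\prod_i d_k(n_i)}{\big(\prod_i n_i\big)^{\frac{k+1}{2k}}\,|\mu|}\ \ll\ X^{\frac{2k-3}{k(k-1)}+\varepsilon},
\]
which translate, via the trivial and first–derivative–test bounds respectively, into the desired $\int_X^{2X}M_N^4\ll X^{3-\frac1{k-1}+\varepsilon}$ off the diagonal. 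With $N=X^{1/(k-1)}$ a heuristic equidistribution count of the values $n_1^{1/k}\pm n_2^{1/k}$ makes both estimates come out exactly to the stated exponents up to $\log X$ factors, which is precisely why $X^{1/(k-1)}$ is the right truncation: a larger $N$ breaks the off-diagonal estimates, a smaller $N$ breaks the bound for $E_N$. The hard part — the step I expect to be the genuine obstacle — is to make the underlying count rigorous, namely to bound, for dyadic $P_i\le N$ and $\delta\in[X^{-1/k},N^{1/k}]$, the number of quadruples $n_i\asymp P_i$ with $0<|\sigma_1 n_1^{1/k}+\cdots+\sigma_4 n_4^{1/k}|\le\delta$. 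Following Tsang I would attack this with the Erd\H os--Tur\'an inequality, reducing it to bounds for the exponential sums $\sum_{n\asymp P}d_k(n)e(t\,n^{1/k})$ that arise, estimated by van der Corput's method or exponent pairs; the Lindel\"of--hypothesis input drawn from Lester's analysis is what permits these to be summed over the comparatively long relevant ranges without loss. Uniformity in $k$ will require some care. Assembling the three contributions — $E_N$, the diagonal, and the off-diagonal — with $N=X^{1/(k-1)}$ then gives $\int_X^{2X}\Delta_k(x)^4\,dx\ll X^{3-\frac1{k-1}+\varepsilon}$, which is the theorem.
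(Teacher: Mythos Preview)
Your overall strategy matches the paper's: decompose $\Delta_k=M_N+E_N$ with $N=X^{1/(k-1)}$, handle $\int M_N^4$ by expanding the cosines and treating the near-diagonal with the Erd\H{o}s--Tur\'an inequality plus van der Corput (this is precisely the paper's Lemma~\ref{lemma:erdosturan1}, applied exactly as you describe), and control the tail with Lester-type input under LH. The oscillatory analysis you outline for the off-diagonal is the same as the paper's treatment of $S_{12}$, $S_2$, $S_3$ in Section~\ref{section:fourthmoment}; the chosen truncation $N=X^{1/(k-1)}$ corresponds to the paper's choice $Y=X^{1/(k-1)}$.

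The one genuine difference is in how the tail's fourth moment is bounded. The paper writes the tail as an explicit integral $I_k(x;Y,T)$ along the critical line and bounds $\int I_k^4$ by Riesz--Thorin interpolation between the trivial $L^1\!\to\!L^\infty$ bound and an $L^2\!\to\!L^2$ bound (Lemmas~\ref{lemma:TL2toL2}--\ref{lemma:Ik4thmoment}), obtaining $\int I_k^4\ll X^{3+\varepsilon}/Y$. Your route---$\int E_N^4\le(\sup|E_N|)^2\int E_N^2$ with $\sup|E_N|\ll X^{1/2+\varepsilon}$ from LH and $\int E_N^2\ll X^{2-1/(k-1)+\varepsilon}$---is more elementary and yields the identical bound. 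This is a real simplification: it avoids interpolation entirely, at the cost of invoking the pointwise consequence $\Delta_k(x)\ll x^{1/2+\varepsilon}$ of LH (which the paper's argument does not use). Two small corrections: first, the Lindel\"of hypothesis plays no role in the exponential-sum step---the Erd\H{o}s--Tur\'an/van der Corput count of near-diagonal quadruples is purely elementary, and LH enters only through the tail and the Vorono\"{\i}-type error; second, the paper does not attempt to parametrize the full diagonal $n_1^{1/k}+n_2^{1/k}=n_3^{1/k}+n_4^{1/k}$ as you do, but instead extracts only the trivially paired diagonal (contributing $\ll X^{2-2/k}$) and absorbs all remaining solutions into the near-diagonal count, which is cleaner than appealing to linear independence of $k$-th roots.
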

Our proof of the case $k=3$ of Theorem~\ref{thm:deltakfourthmmnt} can in fact be made unconditional (see the remark below Lemma~\ref{lemma:Ik4thmoment} in Section~\ref{section:lester}). However, Ivi\'{c}~\cite[Theorem~13.10]{ivic} has proved through a different method that
\begin{equation}\label{ivicbound}
\frac{1}{X}\int_X^{2X} \big( \Delta_3(x)\big)^4\,dx \ll_{\varepsilon} X^{\frac{139}{96}+\varepsilon}
\end{equation}
by applying Kolesnik's~\cite{kolesnik} pointwise bound $\Delta_3(x)\ll x^{\frac{43}{96}+\varepsilon}$. This bound for the fourth moment of $\Delta_3$ is stronger than the case $k=3$ of Theorem~\ref{thm:deltakfourthmmnt}. The current best unconditional bound for large $k$ is due to Ivi\'{c} and Zhai~\cite{IZ}, who proved for $k\geq 4$ that
\begin{align*}
\frac{1}{X}\int_X^{2X} \big( \Delta_k(x)\big)^4\,dx \ll_{\epsilon} X^{2-\frac{2}{k}+\varepsilon} + X^{4 -\frac{16}{2k+1} +\varepsilon }.
\end{align*}
The conjecture
\begin{equation*}
\Delta_k(x) \ll x^{\frac{1}{2}-\frac{1}{2k}+\varepsilon}
\end{equation*}
of Titchmarsh~\cite[\S 12.4]{titchmarsh}, if true, would imply that
\begin{equation*}
\frac{1}{X}\int_X^{2X} \big( \Delta_k(x)\big)^4\,dx \ll_{k,\varepsilon} X^{2-\frac{2}{k}+\varepsilon}.
\end{equation*}
If we assume this and LH (resp. RH), then we can deduce the conclusion of Theorem~\ref{thm:deltaknosignchange} (resp. \ref{thm:deltaknosignchangeRH}) with the lower bound for the number of disjoint subintervals improved to $X^{\frac{1}{k}+\xi-\varepsilon}$ (resp. $X^{\frac{1}{k}-\varepsilon}$).

Using Corollary~\ref{cor:d3inshortintervals} and \eqref{ivicbound} in place of Corollary~\ref{cor:dkinshortintervalsLH} and Theorem~\ref{thm:deltakfourthmmnt}, respectively, in our arguments in Section~\ref{section:main} for $k=3$, we are able to prove unconditionally the existence of $\gg X^{\frac{37}{96}-\varepsilon}$ disjoint subintervals of $[X,2X]$, each of length $\gg X^{\frac{1}{2}-\varepsilon}$, such that $|\Delta_3(x)|>(\frac{1}{2}C_3-\varepsilon)x^{1/3}$ for all $x$ in the subinterval. However, Cao, Tanigawa, and Zhai~\cite{CTZ} have proven the stronger result that there are $\gg X^{\frac{1}{2}-\varepsilon}$ such subintervals. They also prove under the assumption of the Lindel\"{o}f hypothesis that there are $\gg X^{\frac{1}{3}-\varepsilon}$ disjoint subintervals of $[X,2X]$, each of length $\gg X^{\frac{2}{3}-\varepsilon}$, such that $|\Delta_3(x)|>(\frac{1}{2}C_3-\varepsilon)x^{1/3}$ for all $x$ in the subinterval. This result is stronger than the case $k=3$ of Theorem~\ref{thm:deltaknosignchange}, which implies the existence of only $\gg X^{\frac{1}{6}-\varepsilon}$ such subintervals. They are able to obtain these stronger results for $k=3$ by showing that $|\Delta_3(x)|>(\frac{1}{2}C_3-\varepsilon)x^{1/3}$ on a subset of $[X,2X]$ whose measure is $\gg X^{1-\varepsilon}$. They do so by applying the bound
\begin{equation}\label{HBbound}
\frac{1}{X}\int_X^{2X} |\Delta_3(x)|^3 \,dx \ll X^{1+\varepsilon}
\end{equation}
due to Heath-Brown~\cite{heathbrown2}. Using this idea, we may improve the $k=3$ case of Theorem~\ref{thm:deltaknosignchangeRH} and deduce the following.
\begin{thm}\label{thm:delta3nosignchangeRH}
Assume the Riemann hypothesis. Let $C_3$ be defined by \eqref{Ckdef} with $k=3$, and let $\varepsilon$ be an arbitrarily small positive constant. There exists an absolute constant $c_0>0$ and a constant $X_0>0$ depending only on $\varepsilon$ such that if $X\geq X_0$, then there are at least $X^{\frac{1}{3}-\varepsilon}$ disjoint subintervals of $[X,2X]$, each of length $c_0 \varepsilon X^{2/3}(\log X)^{-11}$, such that $|\Delta_3(x)|>(\frac{1}{2}C_3-\varepsilon)x^{1/3}$ for all $x$ in the subinterval. In particular, $\Delta_3(x)$ does not change sign in any of these subintervals.
\end{thm}

The rest of the paper is organized as follows. In Section~\ref{sec:notation}, we set some notations and conventions that hold throughout this work. In Section~\ref{sec:lemmas}, we prove some technical lemmas that are used in the proofs of our main results. We use Lester's method in Section~\ref{section:lester} to bound moments involving the contribution of large frequencies in the trigonometric polynomial approximation to $\Delta_k(x)$. We prove Theorems~\ref{thm:dkinshortintervalsnosup} and \ref{thm:dkinshortintervalsRH} in Section~\ref{section:meansquare}. We prove Theorem~\ref{thm:deltakfourthmmnt} in Section~\ref{section:fourthmoment}, and prove Theorems~\ref{thm:deltaknosignchangeRH}, \ref{thm:deltaknosignchange}, and \ref{thm:delta3nosignchangeRH} in Section~\ref{section:main}.

\section*{Acknowledgements}

The authors would like to thank Wenguang Zhai for several helpful comments on a previous version of this article, and for informing us of the paper \cite{CTZ}. The first author was partially supported by the National Science Foundation grant DMS-1854398. The second author was partially supported by the Graduate College Master’s Fellowship program at the University of Illinois, the Alfred P. Sloan Foundation’s MPHD Program, awarded in 2021, and by the National Science Foundation Graduate Research Fellowship Program under Grant No. DGE 21-46756.

\section{Notations and conventions}\label{sec:notation}

For the rest of this paper, $k$ denotes an integer $\geq 3$. Most of our arguments will work for $k=2$, but this special case is already well-understood in the context of our main results through the works of Heath-Brown and Tsang~\cite{hbtsang}, Ivi\'{c}~\cite{ivic3}, and Tsang~\cite{tsang4thmoment}.

We follow standard convention in analytic number theory and use $\varepsilon$ to denote an arbitrarily small positive constant whose value may vary from one line to the next. We allow implied constants to depend on $\varepsilon$ and $k$ without necessarily indicating so. We will sometimes display the dependence of implied constants on $\varepsilon$, $k$, or other quantities by using subscripts such as those in $A \ll_B C$ or $r=O_s(t)$. Implied constants will never depend on the parameters $H,T,X,Y$.

We use $e(x)$ to denote $e^{2\pi i x}$. For $x,V,Y,T>0$, we define $Q_k(x;V)$ and $I_k(x;Y,T)$ by
\begin{equation}\label{Qkdef}
Q_k(x;V) := \frac{x^{\frac{1}{2}-\frac{1}{2k}}}{\pi \sqrt{k}}\sum_{n\leq V/x} \frac{d_k(n)}{n^{\frac{1}{2}+\frac{1}{2k}}} \cos\bigg(2\pi k (nx)^{1/k}+ \frac{(k-3)\pi}{4}\bigg)
\end{equation}
and
\begin{equation}\label{Ikdef}
I_k(x;Y,T):=\text{\upshape{Re}} \Bigg\{ \frac{1}{\pi i } \int_{\frac{1}{2} +iY}^{\frac{1}{2} +iT}\zeta^k(s) \frac{x^s}{s}\,ds \Bigg\}.
\end{equation}

\section{Lemmata}\label{sec:lemmas}

The first of two key ingredients in our proofs of Theorems~\ref{thm:dkinshortintervalsnosup} and \ref{thm:dkinshortintervalsRH} is a method of Selberg~\cite{selberg} that uses the Plancherel theorem to express a weighted mean square of $\Delta_k(x+x/T)-\Delta(x)$ in terms of a weighted $2k$th moment of $\zeta(s)$ (see equation \eqref{eqn:selberg} below). In carrying out Selberg's method, we use the following lemma.

\begin{lem}\label{lemma:perron}
Let $\Delta_k(x)$ be defined by \eqref{deltakdef}, and define $\Delta_k^*(x)$ by
\begin{equation}\label{deltak*def}
\Delta_k^*(x) =\frac{\Delta_k(x+)+\Delta_k(x-)}{2}.
\end{equation}
If $\delta$ satisfies $0\leq \delta<1/2$ and \eqref{deltamomentbound}, then there exists a sequence $T_1,T_2,\dots$ of positive real numbers such that $T_{m}\in [2^m,2^{m+1}]$ for each $m$ and
\begin{equation*}
\Delta_k^*(x)  = \lim_{m\rightarrow \infty} \frac{1}{2\pi i} \int_{\frac{1}{2}+\delta-iT_m}^{\frac{1}{2}+\delta+iT_m} \frac{x^s}{s} \zeta^k(s) \,ds
\end{equation*}
for all $x>0$.
\end{lem}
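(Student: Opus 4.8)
The plan is to derive this from the classical Perron-type formula for $\sum_{n \le x} d_k(n)$ together with a contour-shifting argument, choosing the truncation heights $T_m$ so as to avoid the zeros of $\zeta$ and control the horizontal integrals. Recall that $d_k(n)$ is generated by $\zeta^k(s)$, which is absolutely convergent for $\mathrm{Re}(s) > 1$. By the standard effective Perron formula (e.g.\ Titchmarsh, \emph{Theory of the Riemann Zeta-Function}, \S3.12, or Montgomery--Vaughan), for any $c > 1$ and any $T > 0$ not equal to an ordinate where the sum has a jump,
\begin{equation*}
\sum_{n \le x}{}^{*} d_k(n) = \frac{1}{2\pi i}\int_{c-iT}^{c+iT} \zeta^k(s)\frac{x^s}{s}\,ds + O\!\left(\text{explicit error}\right),
\end{equation*}
where the star on the sum denotes the usual convention of weighting a term $n = x$ by $1/2$, and the error term tends to $0$ as $T \to \infty$ for each fixed $x$; in fact it is $O(x^{1+\varepsilon}/T + \dots)$ using the bound $\zeta(c+it) \ll 1$ on the line $\mathrm{Re}(s)=c$. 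Subtracting the residue $\mathrm{Res}_{s=1}(\zeta^k(s)x^s/s)$ from both sides converts the left side into $\Delta_k^*(x)$ (with the symmetrized value arising precisely from the starred sum), so it suffices to show
\begin{equation*}
\lim_{m \to \infty}\left[\frac{1}{2\pi i}\int_{c-iT_m}^{c+iT_m}\zeta^k(s)\frac{x^s}{s}\,ds - \underset{s=1}{\mathrm{Res}}\left(\frac{\zeta^k(s)x^s}{s}\right) - \frac{1}{2\pi i}\int_{\frac12+\delta-iT_m}^{\frac12+\delta+iT_m}\zeta^k(s)\frac{x^s}{s}\,ds\right] = 0.
\end{equation*}

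Next I would apply Cauchy's theorem to the rectangle with vertices $c \pm iT_m$ and $\tfrac12+\delta \pm iT_m$. The only pole of $\zeta^k(s)x^s/s$ inside this rectangle (for $\delta < 1/2$, so that the left edge lies strictly to the left of $s=1$, and recalling $s=0$ is not enclosed since $c,\,\tfrac12+\delta > 0$) is the pole of order $k$ at $s=1$, whose residue is exactly the subtracted term. Hence the bracketed quantity above equals the sum of the two horizontal integrals along $\mathrm{Im}(s) = \pm T_m$, namely
\begin{equation*}
-\frac{1}{2\pi i}\int_{\frac12+\delta}^{c}\zeta^k(\sigma + iT_m)\frac{x^{\sigma+iT_m}}{\sigma+iT_m}\,d\sigma + \frac{1}{2\pi i}\int_{\frac12+\delta}^{c}\zeta^k(\sigma - iT_m)\frac{x^{\sigma-iT_m}}{\sigma-iT_m}\,d\sigma.
\end{equation*}
So everything reduces to choosing the $T_m \in [2^m, 2^{m+1}]$ so that these horizontal integrals tend to $0$.

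To handle the horizontal integrals I would invoke the well-known fact that there exist heights $T$, arbitrarily large and with $T \in [2^m, 2^{m+1}]$ for each $m$, at which $\zeta(\sigma + iT) \ll T^{\varepsilon}$ uniformly for $\sigma$ in any fixed bounded range such as $[\tfrac12+\delta, c] \subset [\tfrac12, 2]$ — this is the standard ``averaging over a dyadic block'' argument: the mean value $\int_{2^m}^{2^{m+1}} |\zeta(\sigma+it)|^{2k}\,dt$ is, by \eqref{deltamomentbound} with the given admissible $\delta$ (after noting that the $2k$th moment on $\mathrm{Re}(s)=\tfrac12+\delta$ dominates the moment on any line to its right in the strip up to $\sigma = c$, using convexity/the Phragm\'en--Lindel\"of principle or simply the trivial bound for $\sigma$ near $c$), at most $\ll 2^{m(1+\varepsilon)}$, so the set of $t \in [2^m,2^{m+1}]$ on which $|\zeta(\tfrac12+\delta+it)|^{2k} > 2^{m\varepsilon}$ has measure $\ll 2^{m(1+\varepsilon)}/2^{m\varepsilon} = o(2^m)$, whence we may pick $T_m$ in the complementary (majority) set. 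For such $T_m$, the horizontal integrand is $\ll T_m^{\varepsilon} \cdot x^{c} / T_m = o(1)$ as $m \to \infty$ for each fixed $x$ (the factor $x^\sigma \le \max(x^{1/2+\delta}, x^c)$ is bounded for fixed $x$, and $1/|\sigma \pm iT_m| \asymp 1/T_m$). This forces the bracketed limit to vanish, completing the proof.

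The main obstacle — really the only subtle point — is securing the uniform bound $\zeta(\sigma + iT_m) \ll T_m^\varepsilon$ across the \emph{whole} segment $\sigma \in [\tfrac12+\delta, c]$ rather than just at the single abscissa $\tfrac12+\delta$ where \eqref{deltamomentbound} is assumed. The cleanest route is: for $\sigma \ge 1 + (\log T_m)^{-1}$ (or even $\sigma \ge c > 1$) one has the trivial absolute-convergence bound $|\zeta(\sigma+it)| \ll 1$; for $\tfrac12 + \delta \le \sigma \le 1+(\log T_m)^{-1}$ one combines the choice of $T_m$ from the $2k$th-moment average on $\mathrm{Re}(s) = \tfrac12+\delta$ with a subconvexity-free convexity bound (the Phragm\'en--Lindel\"of argument applied to $\zeta^k(s)/$(something), or simply quoting the standard lemma — e.g.\ Titchmarsh \S9.4 or Ivi\'c — that from an $L^{2k}$-average bound on one vertical line one deduces a pointwise $T^\varepsilon$ bound at a well-chosen nearby height on every line to its right up to $\mathrm{Re}(s) = 2$). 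Since this kind of height-selection lemma is completely standard in the analytic number theory literature, I would state it as a cited fact and keep the argument short; no genuinely new estimate is needed here. (One should also double-check the harmless bookkeeping that when $x$ is a positive integer, the starred Perron sum produces precisely $\tfrac12 d_k(x)$ for the term $n = x$, matching the definition \eqref{deltak*def} of $\Delta_k^*$; for non-integer $x$ both $\Delta_k^*(x)$ and the starred sum coincide with the ordinary objects and there is nothing to check.)
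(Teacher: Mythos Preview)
Your overall strategy---Perron's formula on a line $\mathrm{Re}(s)=c>1$, contour shift to $\mathrm{Re}(s)=\tfrac12+\delta$ picking up the residue at $s=1$, then averaging over $T\in[2^m,2^{m+1}]$ to find a good height $T_m$ at which the horizontal integrals are small---is exactly what the paper does. The only substantive difference is in how you control the horizontal integrals, and here your execution takes an unnecessary detour.

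You try to produce a single $T_m$ at which $\zeta(\sigma+iT_m)\ll T_m^{\varepsilon}$ holds \emph{pointwise} for every $\sigma\in[\tfrac12+\delta,c]$, and you correctly flag this as the one subtle point. But the ``standard lemma'' you invoke---that an $L^{2k}$ bound on a single vertical line yields a pointwise $T^{\varepsilon}$ bound at a well-chosen height on every line to the right---is not standard in that form; your Chebyshev argument only controls the bad set at the \emph{one} abscissa $\tfrac12+\delta$, and Phragm\'en--Lindel\"of does not convert a pointwise bound at a single point on one edge into a bound along a horizontal segment. The paper sidesteps this entirely: rather than seek a pointwise bound on $\zeta$, it averages the horizontal integral itself. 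Writing
\[
f_{k,\delta}(T)=\Bigg(\int_{2-iT}^{\frac12+\delta-iT}+\int_{\frac12+\delta+iT}^{2+iT}\Bigg)\Big|\frac{\zeta^k(s)}{s}\,ds\Big|,
\]
one swaps the order of integration (Fubini), applies Cauchy--Schwarz in $T$ for each fixed $\sigma$, and then uses that convexity transfers \eqref{deltamomentbound} to every $\sigma\in[\tfrac12+\delta,2]$ to get $\int_{2^m}^{2^{m+1}}f_{k,\delta}(T)\,dT\ll 2^{m\varepsilon}$. This immediately gives a $T_m\in[2^m,2^{m+1}]$ with $f_{k,\delta}(T_m)\ll 2^{m(-1+\varepsilon)}$, which is precisely the bound on the horizontal integrals that you need. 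No pointwise control of $\zeta$ along the segment is required. I recommend you replace your final paragraph with this direct averaging argument; everything else in your write-up is fine.
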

\begin{proof}
Let $g(y)=0$ for $0<y<1$, $g(y)=1/2$ for $y=1$, and $g(y)=1$ for $y>1$. Then Perron's formula (see, for example, the lemma in \S 17 of Davenport~\cite{davenport}) and the definitions \eqref{deltakdef} and \eqref{deltak*def} imply
\begin{equation}\label{eqn: applyperron}
\begin{split}
& \Delta_k^*(x) + \underset{s=1}{\mbox{Res}} \left(\frac{\zeta^k(s) x^s}{s}\right) = \sum_{n=1}^{\infty} d_k(n) g\left( \frac{x}{n}\right) \\
& = \frac{1}{2\pi i} \int_{2-iT}^{2+iT} \frac{x^s}{s} \zeta^k(s) \,ds +O\left( \frac{d_k(x)}{T}\right) + O\Bigg( x^2 \sum_{\substack{n=1 \\ n\neq x}}^{\infty} \frac{d_k(n)}{n^2} \min \left\{ 1, \frac{1}{T|\log (x/n)|}\right\}\Bigg)
\end{split}
\end{equation}
for any $x,T>0$, where we define $d_k(x)=0$ if $x$ is not a positive integer. We move the line of integration and use the residue theorem to write
\begin{equation}\label{eqn: residue1}
\frac{1}{2\pi i} \int_{2-iT}^{2+iT} \frac{x^s}{s} \zeta^k(s) \,ds = \underset{s=1}{\mbox{Res}} \left(\frac{\zeta^k(s) x^s}{s}\right) + \frac{1}{2\pi i} \Bigg(  \int_{\frac{1}{2}+\delta-iT}^{\frac{1}{2}+\delta+iT} + \int_{2-iT}^{\frac{1}{2}+\delta-iT} + \int_{\frac{1}{2}+\delta+iT}^{2+iT} \Bigg)\frac{x^s}{s} \zeta^k(s) \,ds.
\end{equation}
To estimate the latter two integrals, which are along horizontal line segments, we define
\begin{equation}\label{fkdeltadef}
f_{k,\delta}(T) := \Bigg( \int_{2-iT}^{\frac{1}{2}+\delta-iT} + \int_{\frac{1}{2}+\delta+iT}^{2+iT} \Bigg) \left| \frac{\zeta^k(s)}{s}  \,ds \right|.
\end{equation}
If $m$ is a positive integer, then \eqref{fkdeltadef} and the Cauchy-Schwarz inequality imply
\begin{align}
\int_{2^m}^{2^{m+1}} f_{k,\delta}(T) \,dT
& \ll \frac{1}{2^m} \int_{\frac{1}{2}+\delta}^2 \int_{2^m}^{2^{m+1}} |\zeta(\sigma+iT)|^k \,dT\,d\sigma \notag \\
& \ll \frac{1}{2^{m/2}} \int_{\frac{1}{2}+\delta}^2 \Bigg( \int_{2^m}^{2^{m+1}} |\zeta(\sigma+iT)|^{2k} \,dT\Bigg)^{1/2} \,d\sigma. \label{eqn: fkdeltabound}
\end{align}
By \eqref{deltamomentbound} and convexity (see, for example, \S 7.8 of Titchmarsh~\cite{titchmarsh}), it holds that
\begin{equation*}
\int_{2^m}^{2^{m+1}} |\zeta(\sigma+iT)|^{2k} \,dT \ll 2^{m(1+\varepsilon)}
\end{equation*}
uniformly for all positive integers $m$ and all $\sigma$ in the interval $[\frac{1}{2}+\delta,2]$. From this and \eqref{eqn: fkdeltabound}, we deduce that
\begin{equation*}
\int_{2^m}^{2^{m+1}} f_{k,\delta}(T) \,dT \ll 2^{m\varepsilon}
\end{equation*}
uniformly for all positive integers $m$. Since $f_{k,\delta}(T)$ is nonnegative by \eqref{fkdeltadef}, it follows that for each positive integer $m$ there is a $T_m$ in the interval $[2^m,2^{m+1}]$ such that
\begin{equation*}
f_{k,\delta}(T_m) \ll 2^{m(-1+\varepsilon)}.
\end{equation*}
From this, the definition \eqref{fkdeltadef} of $f_{k,\delta}$, and the triangle inequality, we arrive at
\begin{equation*}
\Bigg(  \int_{2-iT_m}^{\frac{1}{2}+\delta-iT_m} + \int_{\frac{1}{2}+\delta+iT_m}^{2+iT_m} \Bigg)\frac{x^s}{s} \zeta^k(s) \,ds \ll 2^{m(-1+\varepsilon)} \max\{ x^{\frac{1}{2}+\delta},x^2\}
\end{equation*}
for all $m$. The lemma now follows from this, \eqref{eqn: applyperron}, and \eqref{eqn: residue1}.
\end{proof}

While the first of two key ingredients in our proofs of Theorems~\ref{thm:dkinshortintervalsnosup}  and \ref{thm:dkinshortintervalsRH} is Selberg's method, the second key ingredient is the following lemma, which allows us to bound the mean square of $\Delta_k(x+h)-\Delta_k(x)$ in terms of the mean square of $\Delta_k(x+x/T)-\Delta_k(x)$. This lemma is essentially due to Saffari and Vaughan~\cite{saffarivaughan}, and we use a version due to Goldston and Suriajaya~\cite{goldstonsuriajaya} (see also \cite{goldstonvaughan}).

\begin{lem}[Goldston and Suriajaya~\cite{goldstonsuriajaya}, Lemma~3]\label{lem:saffarivaughan}
If $f:\mathbb{R}\rightarrow \mathbb{C}$ is integrable, $X>0$, and $0< h\leq X/4$, then
\begin{equation*}
\int_{X/2}^X |f(t+h)-f(t)|^2\,dt \leq \frac{2X}{h}\int_0^{8h/X}\int_0^X |f(t+\beta t)-f(t)|^2\,dt\,d\beta.
\end{equation*}
\end{lem}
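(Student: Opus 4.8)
The plan is to reduce the additive difference $f(t+h)-f(t)$ to a superposition of dilation differences $f(s+\beta s)-f(s)$ by inserting an auxiliary point $t-\gamma$ and averaging over $\gamma$. For $0\le\gamma\le h$ one has the trivial identity
$$
f(t+h)-f(t) = \big(f(t+h)-f(t-\gamma)\big) - \big(f(t)-f(t-\gamma)\big),
$$
so averaging over $\gamma\in[0,h]$, applying the Cauchy--Schwarz inequality to the $\gamma$-average, and then using $|a-b|^2\le 2|a|^2+2|b|^2$ gives
$$
|f(t+h)-f(t)|^2 \le \frac{2}{h}\int_0^h |f(t+h)-f(t-\gamma)|^2\,d\gamma + \frac{2}{h}\int_0^h |f(t)-f(t-\gamma)|^2\,d\gamma .
$$

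Next I would integrate this over $t\in[X/2,X]$ and, in each of the two resulting double integrals, change variables from $(t,\gamma)$ to $(s,\gamma)$ with $s=t-\gamma$ (the Jacobian is $1$). Because $0<h\le X/4$, the new variable $s=t-\gamma$ ranges only over a subset of $[X/4,X]\subseteq(0,X]$, so $s$ stays positive and at most $X$. Writing $f(t+h)-f(t-\gamma)=f\big(s+(h+\gamma)\big)-f(s)$ and $f(t)-f(t-\gamma)=f(s+\gamma)-f(s)$, and then substituting $u=h+\gamma\in[h,2h]$ in the first integral and $u=\gamma\in[0,h]$ in the second, the two pieces fuse into the single integral
$$
\frac{2}{h}\int_{X/4}^X\int_0^{2h}|f(s+u)-f(s)|^2\,du\,ds ;
$$
crucially, the factor-$2$ from $|a-b|^2\le 2|a|^2+2|b|^2$ is absorbed rather than doubled here, since the two $u$-ranges $[h,2h]$ and $[0,h]$ are complementary halves of $[0,2h]$.

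Finally, for each fixed $s\in[X/4,X]$ I would substitute $u=\beta s$, so that $du=s\,d\beta$ and $\beta$ ranges over $[0,2h/s]\subseteq[0,8h/X]$ (using $s\ge X/4$); bounding $s\le X$ and extending the $\beta$-range to $[0,8h/X]$ yields
$$
\int_{X/2}^X|f(t+h)-f(t)|^2\,dt \le \frac{2X}{h}\int_{X/4}^X\int_0^{8h/X}|f(s+\beta s)-f(s)|^2\,d\beta\,ds ,
$$
and enlarging the outer range to $[0,X]$ and applying Tonelli's theorem to swap the order of integration produces the claimed inequality. The only real obstacle is the bookkeeping in the middle step: one must introduce the auxiliary point $t-\gamma$ (rather than $t+\gamma$) precisely so that $s$ remains positive for $h\le X/4$, and then observe that the two difference terms recombine over $[0,2h]$ so as not to lose an extra factor of $2$; the remaining manipulations are routine changes of variables together with the nonnegativity of the integrand.
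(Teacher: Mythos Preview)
Your argument is correct and is precisely the standard Saffari--Vaughan device that the paper invokes by citing \cite{goldstonsuriajaya}; the paper itself gives no self-contained proof. One cosmetic remark: the appeal to Cauchy--Schwarz in the first step is superfluous, since $f(t+h)-f(t)$ is independent of $\gamma$ and hence the pointwise inequality $|a-b|^2\le 2|a|^2+2|b|^2$ followed by averaging already yields your displayed bound.
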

\begin{proof}
See the proof of Lemma~3 in \cite{goldstonsuriajaya}. The said proof also applies to the case when $0<h\leq 1$.
\end{proof}

The following lemma is a slight modification of Lemma~2.5 of \cite{lester}, and is the starting point of our proof of Theorem~\ref{thm:deltakfourthmmnt}.

\begin{lem}\label{lemma:lester2.5}
Assume the Lindel\"{o}f hypothesis. Let $\Delta_k(x)$ be defined by \eqref{deltakdef}. If $x,T\geq 1$ and $1\leq Y\leq \min\{x,T\}$, then
\begin{equation*}
\Delta_k(x) =Q_k( x; Y^k/(2\pi)^k ) + I_k(x;Y,T) + E_k(x;Y,T),
\end{equation*}
where $Q_k$ is defined by \eqref{Qkdef}, $I_k$ is defined by \eqref{Ikdef}, and
\begin{equation}\label{Ekbound}
\begin{split}
E_k(x;Y,T)
\ll  x^{1+\varepsilon} Y^{-\frac{k}{2}-\frac{1}{2}} + x^{\varepsilon}Y^{\frac{k}{2}-1} + x^{\frac{1}{2} } Y^{ -1+\varepsilon} + x^{1+\varepsilon} T^{-1+\varepsilon} .
\end{split}
\end{equation}
\end{lem}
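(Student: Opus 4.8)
The plan is to start from a truncated Perron-type formula for $\Delta_k(x)$, push the contour to the critical line, and then split the resulting integral at height $Y$, identifying the low-frequency part with the explicit trigonometric polynomial $Q_k$ via the classical Voronoi-type treatment of the segment $[\tfrac12-iY,\tfrac12+iY]$. First I would invoke Lemma~\ref{lemma:perron} with $\delta=0$ (valid under LH, since LH gives \eqref{deltamomentbound} for $\delta=0$ and all $k$): this expresses $\Delta_k^*(x)$ as a limit of integrals of $\zeta^k(s)x^s/s$ over $[\tfrac12-iT_m,\tfrac12+iT_m]$. Truncating at a fixed height $T$ introduces, by a standard estimate for the tail of the Mellin integral of $\zeta^k$ on the half-line under LH (using convexity bounds for the mean square and a dyadic decomposition exactly as in the proof of Lemma~\ref{lemma:perron}), an error of size $\ll x^{1+\varepsilon}T^{-1+\varepsilon}$, accounting for the last term in \eqref{Ekbound}; the difference between $\Delta_k(x)$ and $\Delta_k^*(x)$ is $O(d_k(x))=O(x^\varepsilon)$ and is harmless. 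This leaves $\Delta_k(x) = \mathrm{Re}\{\tfrac{1}{\pi i}\int_{1/2}^{1/2+iT}\zeta^k(s)x^s/s\,ds\} + O(x^\varepsilon + x^{1+\varepsilon}T^{-1+\varepsilon})$, where I have used $\zeta^k(\bar s) = \overline{\zeta^k(s)}$ to fold the integral onto the upper half-line and produce the real part. Splitting this integral at $\tfrac12+iY$ gives precisely $I_k(x;Y,T)$ plus the contribution of the segment $[\tfrac12, \tfrac12+iY]$.

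The heart of the matter is to show that the low segment $\mathrm{Re}\{\tfrac{1}{\pi i}\int_{1/2}^{1/2+iY}\zeta^k(s)x^s/s\,ds\}$ equals $Q_k(x; Y^k/(2\pi)^k)$ up to the remaining error terms in \eqref{Ekbound}. Here I would follow Lester's Lemma~2.5 (and ultimately the Voronoi summation machinery for $d_k$): replace $\zeta^k(s)$ on this segment by its Dirichlet series $\sum_n d_k(n) n^{-s}$, which is not absolutely convergent on $\sigma=\tfrac12$, so more carefully one uses the approximate functional equation / the functional equation $\zeta^k(s) = \chi(s)^k \zeta^k(1-s)$ with $\chi(s) = 2^s\pi^{s-1}\sin(\pi s/2)\Gamma(1-s)$ to convert into a sum over $n$ with the oscillatory kernel $\chi(s)^k x^s/s$, then evaluate each resulting integral $\int \chi(s)^k (x/n)^s \tfrac{ds}{s}$ by the saddle-point method. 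The saddle point of $\chi(s)^k(x/n)^s$ sits near $t = 2\pi (nx)^{1/k}$ on the imaginary axis, which is $\le Y$ exactly when $n \le Y^k/(2\pi)^k$, so the stationary-phase main terms assemble into the cosine sum $Q_k(x;Y^k/(2\pi)^k)$ with the phase $2\pi k(nx)^{1/k} + (k-3)\pi/4$ and amplitude $x^{1/2-1/2k} d_k(n) n^{-1/2-1/2k}/(\pi\sqrt k)$; the terms with $n$ beyond the cutoff, where there is no saddle, are bounded by repeated integration by parts and contribute to the error. Tracking the Stirling-approximation error in $\chi(s)^k$, the contribution of $n$ near the endpoint $Y^k/(2\pi)^k$, the tail $n$ large, and the $s$-small part of the contour gives the four terms $x^{1+\varepsilon}Y^{-k/2-1/2}$, $x^\varepsilon Y^{k/2-1}$, $x^{1/2}Y^{-1+\varepsilon}$, $x^{1+\varepsilon}T^{-1+\varepsilon}$ in \eqref{Ekbound}. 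Since this is labeled a \emph{slight modification} of \cite[Lemma~2.5]{lester}, I would present the endpoint/contour bookkeeping in detail only where it differs (e.g. the condition $1\le Y\le\min\{x,T\}$ and the precise shape of $E_k$) and cite Lester for the saddle-point evaluation itself.

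The main obstacle I anticipate is the rigorous justification of the stationary-phase step on the critical line: the Dirichlet series for $\zeta^k$ does not converge there, so one must either work with a smoothed/truncated version and control the smoothing error, or invoke a Voronoi summation formula for $\sum_{n\le\xi} d_k(n)$ uniformly in the relevant ranges — and for $k\ge 4$ such Voronoi formulas come with genuine error terms (this is one reason LH is assumed). A secondary technical point is uniformity: the error bound \eqref{Ekbound} must hold uniformly for all $1\le Y\le\min\{x,T\}$, so the saddle-point estimates need explicit control of implied constants as $Y$ ranges over this whole interval, in particular near the degenerate endpoints $Y\asymp 1$ and $Y\asymp x$. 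I would handle the latter by noting that when $Y$ is close to its extremes one of the listed error terms already dominates $x^{1/2-1/2k}$ trivially, so no delicate analysis is needed there, and concentrate the careful work on the mid-range.
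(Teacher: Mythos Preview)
Your overall strategy---Perron, push to $\sigma=\tfrac12$, split at height $Y$, identify the low part with $Q_k$ by stationary phase---matches the paper's, but the paper handles the low-frequency segment by a cleaner route that dissolves precisely the obstacle you flagged. Rather than attempting the saddle-point analysis on $\sigma=\tfrac12$ (where, as you note, $\sum d_k(n)n^{-s}$ diverges and one would need smoothing or an approximate functional equation), the paper shifts the integral $\frac{1}{2\pi i}\int_{\frac12-iY}^{\frac12+iY}\zeta^k(s)x^s s^{-1}\,ds$ further left to the line $\sigma=-\varepsilon$, collecting an $O(1)$ residue at $s=0$ and bounding the new horizontal segments at height $\pm Y$ via LH, the functional equation, and Phragm\'en--Lindel\"of (this is what produces the terms $x^\varepsilon Y^{k/2-1}$ and $x^{1/2}Y^{-1+\varepsilon}$). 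On $\sigma=-\varepsilon$ the functional equation gives $\zeta^k(s)=\chi(s)^k\zeta^k(1-s)$ with $\mathrm{Re}(1-s)=1+\varepsilon$, so the series $\sum d_k(n)n^{s-1}$ converges absolutely and one can interchange sum and integral freely; the termwise saddle-point evaluation is then exactly Lester's Lemma~2.4, which the paper quotes as a black box and which yields $Q_k(x;Y^k/(2\pi)^k)+O(x^\varepsilon Y^{k/2-1}+x^{1+\varepsilon}Y^{-k/2-1/2})$.

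A second, minor divergence: the paper does not go through Lemma~\ref{lemma:perron} here. It begins from the standard effective Perron formula on $\sigma=1+\varepsilon$ with error $O(x^\varepsilon+x^{1+\varepsilon}T^{-1})$, then shifts to $\sigma=\tfrac12$ and bounds the horizontal pieces at height $\pm T$ pointwise by LH and Phragm\'en--Lindel\"of, giving $O(x^{1+\varepsilon}T^{-1+\varepsilon})$ directly. Your route via Lemma~\ref{lemma:perron} plus a separate truncation on the critical line would reach the same place but needs an extra dyadic argument to pass from the special sequence $T_m$ to an arbitrary $T$.
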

\begin{proof}
The proof is similar to that of \cite[Lemma~2.5]{lester}, but we provide it since our situation is slightly different. A standard argument using Perron's formula leads to
\begin{equation*}
\sum_{n\leq x} d_k(n) = \frac{1}{2\pi i}\int_{1+\varepsilon -iT}^{1+\varepsilon +iT} \zeta^k(s) \frac{x^s}{s}\,ds + O \bigg( x^{\varepsilon}+\frac{x^{1+\varepsilon}}{T} \bigg).
\end{equation*}
We deform the contour of integration to the path consisting of line segments connecting the points $1+\varepsilon -iT$, $\frac{1}{2} -iT$, $\frac{1}{2} +iT$, and $1+\varepsilon+iT$, leaving a residue from the pole of $\zeta(s)$ at $s=1$. We estimate the contribution of the horizontal line segments using the Lindel\"{o}f hypothesis and the Phragm\'{e}n-Lindel\"{o}f Theorem, and then insert the definitions \eqref{deltakdef} and \eqref{Ikdef} to deduce that
\begin{equation*}
\Delta_k(x) = \frac{1}{2\pi i } \int_{\frac{1}{2} -iY}^{\frac{1}{2}  +iY}\zeta^k(s) \frac{x^s}{s}\,ds + I_k(x; Y,T)  + O\Big(x^{\varepsilon} + x^{1+\varepsilon} T^{-1+\varepsilon} \Big).
\end{equation*}
We evaluate the integral on the right-hand side by deforming its contour of integration to the path consisting of line segments connecting the points $\frac{1}{2} -iY$, $-\varepsilon-iY$, $-\varepsilon+iY$, and $\frac{1}{2} +iY$, leaving a residue of size $O(1)$ from the pole of $1/s$ at $s=0$. We use the Lindel\"{o}f hypothesis, the functional equation, and the Phragm\'{e}n-Lindel\"{o}f Theorem to bound the contribution of the horizontal line segments, and arrive at
\begin{equation*}
\Delta_k(x) = \frac{1}{2\pi i } \int_{-\varepsilon-iY}^{-\varepsilon +iY}\zeta^k(s) \frac{x^s}{s}\,ds + I_k(x; Y,T)  + O\Big( x^{\varepsilon} Y^{\frac{k}{2}-1} + x^{\frac{1}{2}} Y^{-1+\varepsilon}  + x^{1+\varepsilon} T^{-1+\varepsilon}  \Big).
\end{equation*}
Lemma~\ref{lemma:lester2.5} now follows from this and Lemma~2.4 of Lester~\cite{lester}, which states that
\begin{align*}
\frac{1}{2\pi i } \int_{-\varepsilon-iY}^{-\varepsilon +iY}\zeta^k(s) \frac{x^s}{s}\,ds = Q_k( x; Y^k/(2\pi)^k ) + O\big( x^{\varepsilon} Y^{\frac{k}{2}-1} + x^{1+\varepsilon} Y^{-\frac{k}{2}-\frac{1}{2}} \big)
\end{align*}
for $Y\leq x$, where $Q_k$ is defined by \eqref{Qkdef}.
\end{proof}

We will bound the fourth moment of $\Delta_k(x)$ by applying the Erd\"{o}s-Tur\'{a}n inequality together with van der Corput's method for estimating exponential sums in a way similar to the proof of Lemma~4 of Tsang~\cite{tsang4thmoment}. This technique is embodied in the following lemma.

\begin{lem}\label{lemma:erdosturan1}
Let $\|x\|$ denote the distance from $x$ to the nearest integer. If $\rho>0$, $W\geq 1$, and $0<\alpha \ll W^{1/k}$, then
\begin{equation*}
\begin{split}
\# \big\{ \mu \in \mathbb{Z} \ : \ W<\mu\leq 2W  \mbox{\upshape{ and }} \big\| \big(\mu^{1/k} +\alpha \big)^k \big\| \leq \rho \big\}
\ll_k  W\rho  + W^{\frac{2}{3}-\frac{1}{3k}} \alpha^{1/3} + W^{\frac{1}{2}+\frac{1}{2k}} \alpha^{-1/2},
\end{split}
\end{equation*}
with the implied constant depending only on $k$.
\end{lem}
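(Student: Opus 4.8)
The plan is to estimate the counting function by detecting the condition $\|(\mu^{1/k}+\alpha)^k\|\le \rho$ with a smooth (or Fej\'er-type) majorant and then expanding it into a Fourier series, which reduces the problem to bounding exponential sums of the form $\sum_{W<\mu\le 2W} e\big(m(\mu^{1/k}+\alpha)^k\big)$. The cleanest route is the Erd\H{o}s--Tur\'an inequality: for any integer $M\ge 1$,
\begin{equation*}
\#\{W<\mu\le 2W : \|(\mu^{1/k}+\alpha)^k\|\le\rho\} \ll W\rho + \frac{W}{M} + \sum_{m=1}^{M} \frac{1}{m}\,\Big| \sum_{W<\mu\le 2W} e\big(m(\mu^{1/k}+\alpha)^k\big)\Big|.
\end{equation*}
So the main work is a van der Corput estimate for $S(m):=\sum_{W<\mu\le 2W} e(m f(\mu))$, where $f(t)=(t^{1/k}+\alpha)^k$. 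First I would compute the derivatives of $f$: one finds $f'(t)=(t^{1/k}+\alpha)^{k-1}t^{1/k-1}$ and, more importantly, $f''(t)\asymp_k \alpha\, t^{1/k-2}$ in the regime $\alpha\ll W^{1/k}$ (the $\alpha=0$ case degenerates, since then $f$ is linear — this is why the hypothesis $\alpha>0$ matters, and why the bound blows up as $\alpha\to 0$). Thus on the dyadic block $[W,2W]$ we have $f''(t)\asymp \alpha W^{1/k-2}=:\lambda$, so the second-derivative test (e.g. van der Corput's Lemma, as in \S 8.2 of Ivi\'c or \S 5.9 of Titchmarsh) gives $S(m)\ll (m\lambda)^{1/2}W + (m\lambda)^{-1/2}$.

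Next I would feed this into the Erd\H{o}s--Tur\'an sum. Writing $\lambda = \alpha W^{1/k-2}$, the sum over $m$ splits: the term $(m\lambda)^{1/2}W$ contributes
\begin{equation*}
\sum_{m\le M}\frac{1}{m}(m\lambda)^{1/2}W \ll \lambda^{1/2}W M^{1/2} = \alpha^{1/2}W^{1/(2k)-1+1}M^{1/2}=\alpha^{1/2}W^{\frac{1}{2}+\frac{1}{2k}}M^{1/2},
\end{equation*}
while the term $(m\lambda)^{-1/2}$ contributes $\ll \lambda^{-1/2}\sum_{m\le M}m^{-3/2}\ll \lambda^{-1/2}=\alpha^{-1/2}W^{1-\frac{1}{2k}}$, which is dominated by the stated $W^{\frac12+\frac1{2k}}\alpha^{-1/2}$ term when $W\gg 1$ and by the main term $W\rho$ unless $\rho$ is extremely small (and in that edge case one can increase $\rho$ harmlessly, or absorb it — this needs a small check). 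Then I optimize in $M$: balancing $W/M$ against $\alpha^{1/2}W^{\frac12+\frac1{2k}}M^{1/2}$ gives $M\asymp W^{1/3}\alpha^{-1/3}W^{-1/(3k)}=W^{\frac13-\frac1{3k}}\alpha^{-1/3}$ (valid $\ge 1$ precisely because $\alpha\ll W^{1/k}$ up to constants, so one may need to also allow the trivial bound $S(m)\ll W$ when $M$ would be forced below $1$). With this choice both $W/M$ and the first $m$-sum become $\ll W^{\frac23-\frac1{3k}}\alpha^{1/3}$, yielding exactly the claimed bound $\ll_k W\rho + W^{\frac23-\frac1{3k}}\alpha^{1/3} + W^{\frac12+\frac1{2k}}\alpha^{-1/2}$.

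The main obstacle, and the step requiring the most care, is the derivative analysis of $f(t)=(t^{1/k}+\alpha)^k$ and making the second-derivative test uniform in the parameters $\alpha$ and $W$: one must verify that $f''$ does not vanish on $[W,2W]$, that $f''(t)\asymp_k \alpha W^{1/k-2}$ throughout the block with constants depending only on $k$, and that higher derivatives are controlled so that the van der Corput hypotheses genuinely apply (monotonicity of $f''$, say). The regime boundary $\alpha\asymp W^{1/k}$ is exactly where $t^{1/k}$ and $\alpha$ are comparable, so the constants in $f''\asymp \alpha t^{1/k-2}$ stay bounded there; for $\alpha$ much smaller the estimate is the substantive one and for $\alpha$ near the top of the range the trivial bound $\#\{\cdots\}\le W\ll W^{\frac23-\frac1{3k}}\alpha^{1/3}$ (using $\alpha\asymp W^{1/k}$) already suffices, so the two regimes dovetail. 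A secondary nuisance is the bookkeeping when the optimal $M$ falls below $1$ or when $\rho$ is so small that the $\lambda^{-1/2}$ term is not absorbed; both are handled by trivially enlarging the quantities in question, and neither affects the final form of the estimate.
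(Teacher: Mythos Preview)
Your approach is exactly the paper's: Erd\H{o}s--Tur\'an, then van der Corput's second-derivative test on $f(t)=(t^{1/k}+\alpha)^k$, then optimize the cutoff. The structure is fine, but your computation of $f''$ is off and this propagates. A direct calculation gives
\[
f''(t)=-\Big(1-\tfrac{1}{k}\Big)\,\alpha\,(t^{1/k}+\alpha)^{k-2}\,t^{1/k-2},
\]
so on $[W,2W]$ with $\alpha\ll W^{1/k}$ one has $(t^{1/k}+\alpha)^{k-2}\asymp W^{1-2/k}$ and hence $|f''|\asymp_k \alpha W^{-1-1/k}$, \emph{not} $\alpha W^{1/k-2}$ as you wrote. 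With the correct $\lambda\asymp \alpha W^{-1-1/k}$, van der Corput yields
\[
S(m)\ll m^{1/2}\alpha^{1/2}W^{\frac12-\frac{1}{2k}}+m^{-1/2}\alpha^{-1/2}W^{\frac12+\frac{1}{2k}},
\]
and then summing $\sum_{m\le M}m^{-1}|S(m)|\ll M^{1/2}\alpha^{1/2}W^{\frac12-\frac{1}{2k}}+\alpha^{-1/2}W^{\frac12+\frac1{2k}}$. The second term now matches the statement directly (no ``absorption'' argument needed), and balancing $W/M$ against $M^{1/2}\alpha^{1/2}W^{\frac12-\frac1{2k}}$ gives $M\asymp W^{\frac13+\frac1{3k}}\alpha^{-1/3}$, as the paper does. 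Your displayed line $\lambda^{1/2}WM^{1/2}=\alpha^{1/2}W^{1/(2k)-1+1}M^{1/2}=\alpha^{1/2}W^{\frac12+\frac1{2k}}M^{1/2}$ contains an arithmetic slip (the exponent $1/(2k)$ jumped to $1/2+1/(2k)$), and your second-term bound $\alpha^{-1/2}W^{1-\frac1{2k}}$ is actually \emph{larger} than the target $\alpha^{-1/2}W^{\frac12+\frac1{2k}}$, so it is not dominated as you claimed. Fix the derivative and everything falls into place cleanly.
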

\begin{proof}
The Erd\"{o}s-Tur\'{a}n inequality (see, for example, \cite[Corollary~1.1]{montgomery}) implies that
\begin{equation}\label{erdosturan1}
\begin{split}
\# & \big\{ \mu \in \mathbb{Z} \ : \ W<\mu \leq 2W  \mbox{\upshape{ and }} \big\| \big(\mu^{1/k} +\alpha \big)^k \big\| \leq \rho \big\} \\
& \leq 2W\rho + \frac{W}{L+1} + 3 \sum_{\nu=1}^L \frac{1}{\nu} \Bigg|\sum_{W<\mu\leq 2W} e \Big(\nu\big(\mu^{1/k} +\alpha\big)^k  \Big)\Bigg|
\end{split}
\end{equation}
for every positive integer $L$. To estimate the exponential sum, let
\begin{equation}\label{fforlemmaerdosturan}
f(x) = \nu\big(x^{1/k} +\alpha\big)^k.
\end{equation}
Then
$$
f''(x) =  -\Big(1-\frac{1}{k}\Big) \nu \alpha  \big(x^{1/k} +\alpha \big)^{k-2}x^{\frac{1}{k}-2}.
$$
Thus, since $0<\alpha \ll W^{1/k}$, there are positive constants $A_k$ and $B_k$ that depend only on $k$ such that
$$
A_k \nu  \alpha W^{-1-\frac{1}{k}} \leq -f''(x) \leq B_k \nu \alpha W^{-1-\frac{1}{k}}
$$
whenever $W\leq x\leq 2W$. Hence van der Corput's method~\cite[Theorem~5.9]{titchmarsh} gives
$$
\sum_{W<\mu\leq 2W} e(f(\mu)) \ll_k \nu^{1/2} W^{\frac{1}{2}-\frac{1}{2k}}\alpha^{1/2}  + \nu^{-1/2} W^{\frac{1}{2}+\frac{1}{2k}} \alpha^{-1/2} .
$$
From this, the definition \eqref{fforlemmaerdosturan} of $f$, and \eqref{erdosturan1}, we arrive at
\begin{equation*}
\begin{split}
\# & \big\{ \mu\in \mathbb{Z} \ : \ W<\mu\leq 2W  \mbox{\upshape{ and }} \big\| \big(\mu^{1/k} +\alpha\big)^k \big\| \leq \rho \big\} \\
& \ll_k \ W\rho + \frac{W}{L} + L^{1/2} W^{\frac{1}{2}-\frac{1}{2k}} \alpha^{1/2} + W^{\frac{1}{2}+\frac{1}{2k}} \alpha^{-1/2}.
\end{split}
\end{equation*}
To complete the proof of the lemma, we optimize this bound and choose $L$ to be the least integer that is greater than $W^{\frac{1}{3}+\frac{1}{3k}}\alpha^{-1/3}$.
\end{proof}

\section{Lester's method}\label{section:lester}

In proving Theorem~\ref{thm:deltakfourthmmnt}, we will bound the fourth moment of $I_k(x;Y,T)$ by applying Lester's method together with the Riesz-Thorin Interpolation Theorem. In this section, let $1\leq Y\leq T\leq X$ and let $\Xi$ be the line segment from $\frac{1}{2}+iY$ to $\frac{1}{2}+iT$. We view $\Xi$ as a measure space in such a way that
$$
\int_{\Xi}f = -i\int_{\frac{1}{2}+iY}^{\frac{1}{2}+iT} f(s) \,ds =\int_Y^T f(\tfrac{1}{2}+it)\,dt
$$
for all continuous functions $f:\Xi\rightarrow \mathbb{C}$. Define the operator $\mathcal{T}$ by
\begin{equation}\label{operatorTdef}
\mathcal{T}f(x) = \frac{1}{\pi i } \int_{\frac{1}{2} +iY}^{\frac{1}{2}  +iT}f(s) x^s\,ds.
\end{equation}
Note that if $f\in L^p(\Xi)$ for some $p\geq 1$, then H\"{o}lder's inequality implies that $\mathcal{T}f(x)$ exists for all $x>0$, and that $\mathcal{T}f$ is continuous on $(0,\infty)$. Thus, if $f\in L^p(\Xi)$ for some $p\geq 1$, then $\mathcal{T} f \in L^q([X,2X])$ for all $q\geq 1$. In the next two lemmas, we use $\| f\|_p$ to denote the norm of $f$ in $L^p(\Xi)$, and we use $\|\mathcal{T} f\|_q$ to denote the norm of $\mathcal{T} f$ in $L^q([X,2X])$.

\begin{lem}\label{lemma:TL2toL2}
If $f\in L^2(\Xi)$, then $\|\mathcal{T}f\|_2 \ll X^{1+\varepsilon} \|f\|_2$. The implied constant here depends only on $\varepsilon$.
\end{lem}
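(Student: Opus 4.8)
The plan is to reduce the estimate to Plancherel's theorem via the substitution $u=\log x$, which converts the integral defining $\mathcal{T}f$ into a Fourier transform. First I would parametrize the line segment: writing $s=\tfrac12+it$ and using $ds=i\,dt$, the definition \eqref{operatorTdef} gives
\begin{equation*}
\mathcal{T}f(x)=\frac{1}{\pi}\int_Y^T f(\tfrac12+it)\,x^{1/2+it}\,dt=\frac{x^{1/2}}{\pi}\int_{\mathbb{R}} g(t)\,e^{it\log x}\,dt=\frac{x^{1/2}}{\pi}\,\widehat{g}(\log x),
\end{equation*}
where $g(t):=f(\tfrac12+it)\,\mathbbm{1}_{[Y,T]}(t)$ and $\widehat{g}(u):=\int_{\mathbb{R}}g(t)e^{iut}\,dt$. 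Here $g$ is supported on a bounded interval and lies in $L^2(\mathbb{R})$ with $\|g\|_{L^2(\mathbb{R})}=\|f\|_2$; by Cauchy--Schwarz $g\in L^1(\mathbb{R})$ as well, so $\widehat{g}$ is defined pointwise and is continuous, and the identity above is legitimate.

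Next I would compute
\begin{equation*}
\|\mathcal{T}f\|_2^2=\frac{1}{\pi^2}\int_X^{2X} x\,\bigl|\widehat{g}(\log x)\bigr|^2\,dx
\end{equation*}
and substitute $u=\log x$, so that $dx=e^u\,du$ and the integral becomes $\frac{1}{\pi^2}\int_{\log X}^{\log 2X}e^{2u}\,|\widehat{g}(u)|^2\,du$. Bounding $e^{2u}\le (2X)^2$ on the range of integration and then extending the integral to all of $\mathbb{R}$ gives $\|\mathcal{T}f\|_2^2\le \frac{4X^2}{\pi^2}\int_{\mathbb{R}}|\widehat{g}(u)|^2\,du$. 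Finally, Plancherel's theorem yields $\int_{\mathbb{R}}|\widehat{g}(u)|^2\,du=2\pi\|g\|_{L^2(\mathbb{R})}^2=2\pi\|f\|_2^2$, whence $\|\mathcal{T}f\|_2\le \sqrt{8/\pi}\,X\|f\|_2$, which is stronger than the claimed bound and has an absolute implied constant.

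There is no genuine obstacle here; the only points needing care are the justification of the passage to the Fourier transform (integrability of $g$, which follows from its compact support together with $g\in L^2$) and keeping track of the Plancherel normalization constant under the chosen convention for $\widehat{g}$. The factor $X^{1+\varepsilon}$ in the statement is simply slack, kept for uniformity with the interpolation step in the next lemma; the argument in fact delivers $\|\mathcal{T}f\|_2\ll X\|f\|_2$.
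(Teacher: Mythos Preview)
Your argument is correct and actually proves the sharper bound $\|\mathcal{T}f\|_2\le\sqrt{8/\pi}\,X\|f\|_2$ with an absolute constant.  It is, however, genuinely different from the paper's proof.  The paper does not pass to the Fourier side; instead it majorizes the integral over $[X,2X]$ by a smooth weight $w(x/X)$, squares out $|\mathcal{T}f(x)|^2$ as a double integral in $t,v\in[Y,T]$, and uses rapid decay of $\mathcal{J}(y)=\int_0^\infty u^{1+iy}w(u)\,du$ to localize to the near-diagonal $|t-v|\le X^{\eta}$, where the elementary inequality $|ab|\ll|a|^2+|b|^2$ gives the bound $X^{\eta}\|f\|_2^2$.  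Your Plancherel route is shorter, avoids the artificial $X^{\varepsilon}$ loss (and would therefore yield $\|\mathcal{T}f\|_4\ll X^{3/4}\|f\|_{4/3}$ in the subsequent interpolation), and requires no smooth cutoff or integration-by-parts decay estimate.  The paper's approach has the mild advantage of being self-contained in real variables and of generalizing readily to weighted or truncated situations, but for the statement at hand your argument is the cleaner one.
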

\begin{proof}
Let $w:(0,\infty)\rightarrow \mathbb{R}$ be a nonnegative smooth function of compact support such that $w(u)=1$ whenever $1\leq u\leq 2$. Then
\begin{equation*}
\int_X^{2X} |\mathcal{T}f(x)|^2 \,dx \leq \int_0^{\infty} |\mathcal{T}f(x)|^2 w\Big( \frac{x}{X} \Big) \,dx.
\end{equation*}
We replace $\mathcal{T}f(x)$ on the right-hand side by its definition \eqref{operatorTdef}, expand the square, apply Fubini's theorem, and make a change of variables to arrive at
\begin{equation}\label{TL2toL2eqn1}
\int_X^{2X} |\mathcal{T}f(x)|^2 \,dx \leq \frac{X^2}{\pi^2}  \int_{Y}^{T}\int_{Y}^{T}f(\tfrac{1}{2}+it)    \overline{f(\tfrac{1}{2} +iv)}  X^{i(t-v)} \mathcal{J} (t-v)\,dv\,dt,
\end{equation}
where $\mathcal{J}(y):=\int_0^{\infty} u^{1+iy} w( u)  \,du$. Repeated integration by parts shows that $\mathcal{J}(y)\ll_A \min\{1,|y|^{-A}\}$ for arbitrarily large $A>0$. From this and the inequality $|ab|\ll |a|^2+|b|^2$, we deduce for any given $\eta>0$ that
\begin{equation}\label{TL2toL2eqn2}
\begin{split}
\mathop{\int_{Y}^{T}\int_{Y}^{T}}_{|t-v|>X^{\eta}}f(\tfrac{1}{2}+it)    \overline{f(\tfrac{1}{2} +iv)}  X^{i(t-v)} \mathcal{J} (t-v)\,dv\,dt
& \ll_{A,\eta} \frac{1}{X^A} \int_Y^T\int_Y^T|f(\tfrac{1}{2}+it)   |^2\,dv\,dt \\
& = \frac{T-Y}{X^A}\|f\|_2^2.
\end{split}
\end{equation}
On the other hand, the bound $\mathcal{J}(y)\ll 1$ and the inequality $|ab|\ll |a|^2+|b|^2$ imply that
\begin{equation*}
\begin{split}
\mathop{\int_{Y}^{T}\int_{Y}^{T}}_{|t-v|\leq X^{\eta}}f(\tfrac{1}{2}+it)    \overline{f(\tfrac{1}{2} +iv)}  X^{i(t-v)} \mathcal{J} (t-v)\,dv\,dt
\ll \mathop{\int_{Y}^{T}\int_{Y}^{T}}_{|t-v|\leq X^{\eta}}|f(\tfrac{1}{2}+it)   |^2\,dv\,dt
\leq X^{\eta} \|f\|_2^2.
\end{split}
\end{equation*}
From this, \eqref{TL2toL2eqn2}, \eqref{TL2toL2eqn1}, and the fact that $T-Y\leq X$, we arrive at
\begin{equation*}
\int_X^{2X} |\mathcal{T}f(x)|^2 \,dx \ll X^{2+\eta} \|f\|_2^2.
\end{equation*}
Taking the square root of both sides, we finish the proof upon choosing $\eta$ to be an arbitrarily small $\varepsilon>0$.
\end{proof}

\begin{lem}\label{lemma:TL4/3toL4}
If $f\in L^{4/3}(\Xi)$, then $\|\mathcal{T}f\|_4 \ll X^{\frac{3}{4}+\varepsilon} \|f\|_{4/3}$. The implied constant here depends only on $\varepsilon$.
\end{lem}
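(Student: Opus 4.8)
The plan is to deduce Lemma~\ref{lemma:TL4/3toL4} from Lemma~\ref{lemma:TL2toL2} by interpolation. I would regard $\mathcal{T}$ as one linear operator given by the single formula \eqref{operatorTdef} on $L^1(\Xi)+L^2(\Xi)$ --- by the remarks preceding Lemma~\ref{lemma:TL2toL2} it is well defined and continuous on each $L^p(\Xi)$ with $p\ge 1$, so the usual consistency and admissibility hypotheses of the Riesz--Thorin interpolation theorem are met --- and combine the two endpoint bounds $\mathcal{T}\colon L^1(\Xi)\to L^\infty([X,2X])$ and $\mathcal{T}\colon L^2(\Xi)\to L^2([X,2X])$ to reach the exponents $4/3$ and $4$.

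First I would record the trivial $L^1\to L^\infty$ bound. On the segment $\Xi$ one has $|x^s|=x^{1/2}$, so for every $x\in[X,2X]$,
\[
|\mathcal{T}f(x)| \le \frac{1}{\pi}\int_{Y}^{T}\bigl|f(\tfrac12+it)\bigr|\,x^{1/2}\,dt = \frac{x^{1/2}}{\pi}\,\|f\|_1 \le \frac{(2X)^{1/2}}{\pi}\,\|f\|_1 ,
\]
where $\|f\|_1$ is the $L^1(\Xi)$ norm. Hence $\mathcal{T}\colon L^1(\Xi)\to L^\infty([X,2X])$ has operator norm $\ll X^{1/2}$. Lemma~\ref{lemma:TL2toL2}, on the other hand, gives $\mathcal{T}\colon L^2(\Xi)\to L^2([X,2X])$ with operator norm $\ll X^{1+\varepsilon}$.

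Next I would apply the Riesz--Thorin interpolation theorem with parameter $\theta=\tfrac12$, taking $(p_0,q_0)=(1,\infty)$ and $(p_1,q_1)=(2,2)$. The relations $\tfrac{1}{p_\theta}=\tfrac{1-\theta}{p_0}+\tfrac{\theta}{p_1}$ and $\tfrac{1}{q_\theta}=\tfrac{1-\theta}{q_0}+\tfrac{\theta}{q_1}$ give $p_\theta=\tfrac43$ and $q_\theta=4$, while the interpolated operator norm is at most a constant times $\bigl(X^{1/2}\bigr)^{1-\theta}\bigl(X^{1+\varepsilon}\bigr)^{\theta}=X^{3/4+\varepsilon/2}\ll X^{3/4+\varepsilon}$. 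This is exactly the claim $\|\mathcal{T}f\|_4\ll X^{3/4+\varepsilon}\|f\|_{4/3}$.

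I do not expect a genuine obstacle here: the whole content is noticing that both endpoint estimates are already in hand (one trivially, one being Lemma~\ref{lemma:TL2toL2}) and that $(\tfrac43,4)$ is the $\theta=\tfrac12$ interpolant of $(1,\infty)$ and $(2,2)$. The only point deserving a sentence is the admissibility of $\mathcal{T}$ for Riesz--Thorin, which holds because it is defined by one and the same formula on all $L^p(\Xi)$ and $L^{4/3}(\Xi)\subset L^1(\Xi)+L^2(\Xi)$. As an aside, one could instead argue directly: after the change of variable $x=e^u$ the restriction of $\mathcal{T}f$ to $[X,2X]$ equals $\tfrac{x^{1/2}}{\pi}$ times a truncated Fourier transform of $t\mapsto f(\tfrac12+it)$ on $[Y,T]$, so the Hausdorff--Young inequality yields the bound at once (indeed with the $X^\varepsilon$ removable); but the interpolation route is cleaner now that Lemma~\ref{lemma:TL2toL2} is available.
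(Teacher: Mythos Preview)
Your proposal is correct and follows essentially the same route as the paper: the trivial $L^1\to L^\infty$ bound $\|\mathcal{T}f\|_\infty\ll X^{1/2}\|f\|_1$ together with the $L^2\to L^2$ bound of Lemma~\ref{lemma:TL2toL2}, then Riesz--Thorin with $\theta=\tfrac12$ to obtain the $L^{4/3}\to L^4$ estimate. The paper's proof is exactly this, stated in two sentences.
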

\begin{proof}
By taking the absolute value of the integrand on the right-hand side of \eqref{operatorTdef}, we see that $\|\mathcal{T} f\|_{\infty}\ll X^{1/2}\|f\|_1$ for all $f\in L^1(\Xi)$. Lemma~\ref{lemma:TL2toL2} states that $\|\mathcal{T} f\|_{2}\ll X^{1+\varepsilon} \|f\|_2$ for all $f\in L^2(\Xi)$. It follows from these and the Riesz-Thorin Interpolation Theorem (see, for example, \cite[p.~52]{steinshakarchi}) that
$$
\|\mathcal{T} f\|_{4} \ll \big( X^{1/2}\big)^{1/2}\big(X^{1+\varepsilon}\big)^{1/2} \|f\|_{4/3} = X^{\frac{3}{4}+\varepsilon} \|f\|_{4/3}
$$
for all $f\in L^{4/3}(\Xi)$.
\end{proof}

\begin{lem}\label{lemma:Ik4thmoment}
Assume the Lindel\"{o}f hypothesis. If $1\leq Y\leq T\leq X$ and $I_k$ is defined by \eqref{Ikdef}, then
\begin{equation*}
\int_X^{2X} \big| I_k(x;Y,T) \big|^4\,dx \ll \frac{X^{3+\varepsilon}}{Y}.
\end{equation*}
\end{lem}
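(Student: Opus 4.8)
The plan is to recognize $I_k(x;Y,T)$ as the real part of $\mathcal{T}g$ for a suitable $g$, so that the required $L^4$ bound becomes an instance of Lemma~\ref{lemma:TL4/3toL4}. Set $g(s):=\zeta^k(s)/s$ for $s\in\Xi$; since $\Xi$ is a compact segment avoiding the pole $s=1$ of $\zeta$, the function $g$ is continuous on $\Xi$ and hence lies in every $L^p(\Xi)$. Comparing the definitions \eqref{Ikdef} and \eqref{operatorTdef}, we have $I_k(x;Y,T)=\mathrm{Re}\,\mathcal{T}g(x)$, so $|I_k(x;Y,T)|\le|\mathcal{T}g(x)|$ for all $x>0$, and therefore
\[
\int_X^{2X}\bigl|I_k(x;Y,T)\bigr|^4\,dx\le\|\mathcal{T}g\|_4^4\ll X^{3+\varepsilon}\,\|g\|_{4/3}^4
\]
by Lemma~\ref{lemma:TL4/3toL4}. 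Thus it remains to show $\|g\|_{4/3}^4\ll X^{\varepsilon}/Y$.

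To bound $\|g\|_{4/3}$, I would compute directly from the definition of the $L^{4/3}(\Xi)$ norm:
\[
\|g\|_{4/3}^{4/3}=\int_Y^T\frac{|\zeta(\tfrac12+it)|^{4k/3}}{|\tfrac12+it|^{4/3}}\,dt\ll\int_Y^T|\zeta(\tfrac12+it)|^{4k/3}\,t^{-4/3}\,dt.
\]
Under the Lindel\"of hypothesis, $\zeta(\tfrac12+it)\ll_\varepsilon(1+|t|)^{\varepsilon}$, so the integrand is $\ll t^{\varepsilon-4/3}$ for $t\ge Y\ge1$, and the integral is $\ll\int_Y^{\infty}t^{\varepsilon-4/3}\,dt\ll Y^{\varepsilon-1/3}$. (Equivalently, one can split $[Y,T]$ into dyadic blocks $[V,2V]$ and apply on each block the Lindel\"of-equivalent mean value estimate $\int_0^{\tau}|\zeta(\tfrac12+it)|^{A}\,dt\ll_{A,\varepsilon}\tau^{1+\varepsilon}$, valid for every fixed $A\ge0$, and then sum the geometric series $\sum_{V\ge Y}V^{\varepsilon-1/3}$.) Hence $\|g\|_{4/3}^{4/3}\ll Y^{\varepsilon-1/3}$, so that $\|g\|_{4/3}^4\ll Y^{3\varepsilon-1}=Y^{3\varepsilon}/Y$.

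Combining the two displays and using $Y\le X$ to absorb $Y^{3\varepsilon}$ into $X^{\varepsilon}$ yields $\int_X^{2X}|I_k(x;Y,T)|^4\,dx\ll X^{3+\varepsilon}/Y$ after renaming $\varepsilon$, which is the desired bound. I do not expect any genuine obstacle in this argument; the one point that warrants a little care is that $4k/3$ need not be an even integer, so the Lindel\"of hypothesis should be invoked either through the pointwise bound $\zeta(\tfrac12+it)\ll t^{\varepsilon}$ or through its consequence for arbitrary (not necessarily even) moments, rather than through the classical even-moment asymptotics. Everything else is routine manipulation of the norms on $\Xi$ and $[X,2X]$.
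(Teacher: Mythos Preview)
Your proof is correct and follows essentially the same route as the paper: you identify $I_k(x;Y,T)$ as the real part of $\mathcal{T}g$ with $g(s)=\zeta^k(s)/s$, invoke Lemma~\ref{lemma:TL4/3toL4} to reduce to bounding $\|g\|_{4/3}$, and then estimate the resulting integral via the Lindel\"of hypothesis. The paper compresses the last step into a single sentence, whereas you spell out the computation $\|g\|_{4/3}^{4/3}\ll Y^{\varepsilon-1/3}$ explicitly; your remark about $4k/3$ not being an even integer is a nice point of care, though the paper simply applies the pointwise Lindel\"of bound without comment.
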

\begin{proof}
Let $f(s)=s^{-1}\zeta(s)^k$. Then the definitions \eqref{Ikdef} and \eqref{operatorTdef} of $I_k$ and $\mathcal{T}$ imply that
\begin{equation*}
I_k(x;Y,T) =\text{Re}\big( \mathcal{T}f(x) \big).
\end{equation*}
From this, the inequality $|\text{Re}(z)|\leq |z|$, and Lemma~\ref{lemma:TL4/3toL4}, we arrive at
\begin{equation*}
\int_X^{2X} \big| I_k(x;Y,T) \big|^4\,dx \ll X^{3+\varepsilon}\Bigg(\int_Y^T \frac{|\zeta(\frac{1}{2}+it)|^{4k/3}}{t^{4/3}}\,dt \Bigg)^3.
\end{equation*}
The right-hand side is $\ll X^{3+\varepsilon}Y^{-1}$ if the Lindel\"{o}f hypothesis is true.
\end{proof}

We remark that the Lindel\"{o}f hypothesis is unnecessary for the case $k=3$ of Lemma~\ref{lemma:Ik4thmoment} because the size of the fourth moment of $\zeta(s)$ is known \cite[(7.6.2)]{titchmarsh}. Moreover, Lemma~\ref{lemma:lester2.5} may be made unconditional by using any $\delta$ satisfying \eqref{deltamomentbound}, as in Lemma~2.5 of Lester~\cite{lester}. These facts together with the arguments in Section~\ref{section:fourthmoment} lead to an unconditional proof of Theorem~\ref{thm:deltakfourthmmnt} for $k=3$. However, as mentioned earlier, the better bound \eqref{ivicbound} has been found by Ivi\'{c}~\cite{ivic}.

\section{The mean square of \texorpdfstring{$\Delta_k(x+h)-\Delta_k(x)$}{▲k(x+h)-▲k(x)} }\label{section:meansquare}

We now carry out Selberg's method~\cite{selberg} to prove Theorem~\ref{thm:dkinshortintervalsnosup}. Let $T\geq 2$ be a parameter, and define $\kappa>0$ by
\begin{equation}\label{kappadef}
e^{\kappa} = 1+\frac{1}{T}.
\end{equation}
Suppose that $0\leq\delta<1/2$ and $\delta$ satisfies \eqref{deltamomentbound}. Lemma~\ref{lemma:perron} then guarantees the existence of an increasing sequence $T_1,T_2,\dots$ of positive real numbers such that $\lim_{m\rightarrow \infty}T_m =\infty$ and
\begin{equation*}
\Delta_k^*(e^{\tau+\kappa})-\Delta_k^*(e^{\tau}) =\lim_{m\rightarrow \infty} \frac{1}{2\pi i} \int_{\frac{1}{2}+\delta-iT_m}^{\frac{1}{2}+\delta+iT_m} e^{s\tau}\left(\frac{e^{s\kappa}-1}{s} \right)\zeta^k(s) \,ds
\end{equation*}
for all real numbers $\tau$. Divide both sides by $\exp(\tau(\frac{1}{2}+\delta))$ and write the variable of integration $s$ as $\frac{1}{2}+\delta -2\pi it$ to arrive at
\begin{equation*}
\frac{\Delta_k^*(e^{\tau+\kappa})-\Delta_k^*(e^{\tau})}{e^{\tau(\frac{1}{2}+\delta)}}  =\lim_{m\rightarrow \infty} \int_{- T_m/(2\pi)}^{T_m/(2\pi)}e^{-2\pi i\tau t} \left(\frac{e^{\kappa(\frac{1}{2}+\delta-2\pi it)}-1}{\tfrac{1}{2}+\delta-2\pi it} \right) \zeta^k(\tfrac{1}{2}+\delta-2\pi it) \,dt
\end{equation*}
for all real $\tau$. The right-hand side is a Fourier transform, and we thus deduce from the Plancherel theorem that
\begin{equation*}
\int_{-\infty}^{\infty} \Bigg| \frac{\Delta_k^*(e^{\tau+\kappa})-\Delta_k^*(e^{\tau})}{e^{\tau(\frac{1}{2}+\delta)}} \Bigg|^2 \,d\tau = \int_{-\infty}^{\infty} \Bigg| \left(\frac{e^{\kappa(\frac{1}{2}+\delta-2\pi it)}-1}{\tfrac{1}{2}+\delta-2\pi it} \right) \zeta^k(\tfrac{1}{2}+\delta-2\pi it)\Bigg|^2\,dt.
\end{equation*}
We insert into this the definition \eqref{kappadef} and make the changes of variables $\tau\mapsto \log x$ and $t\mapsto -t/(2\pi )$ to arrive at
\begin{equation}\label{eqn:selberg}
\int_{0}^{\infty} \bigg| \Delta_k \left(x + \frac{x}{T}\right)-\Delta_k (x) \bigg|^2 \,\frac{dx}{x^{2+2\delta}}  = \frac{1}{ \pi}\int_{0}^{\infty} \Bigg| \left(\frac{e^{\kappa(\frac{1}{2}+\delta+ it)}-1}{\tfrac{1}{2}+\delta+ it} \right) \zeta^k(\tfrac{1}{2}+\delta+it)\Bigg|^2\,dt,
\end{equation}
where we also used the facts that $\zeta(\overline{s})=\overline{\zeta(s)}$ and $\Delta_k^*(x)=\Delta_k(x)$ for almost every $x$ by the definition \eqref{deltak*def} of $\Delta_k^*$.

To bound the right-hand side of \eqref{eqn:selberg} using moments of $\zeta(s)$, we split the interval of integration into dyadic parts. If $\ell$ is a nonnegative integer and $2^{\ell}T\leq t\leq 2^{\ell+1}T$, then the definition \eqref{kappadef} of $\kappa$ implies that $\exp(\kappa(\frac{1}{2}+\delta))\ll 1$ for $\delta<1/2$, and hence
\begin{equation}\label{eqn:expbound1}
\frac{e^{\kappa(\frac{1}{2}+\delta+ it)}-1}{\tfrac{1}{2}+\delta+ it} \ll \frac{1}{t} \ll \frac{1}{2^{\ell}T}.
\end{equation}
On the other hand, \eqref{kappadef} implies that $\kappa =\log (1+1/T)\leq 1/T$. Thus, if $0\leq t\leq T$ and $0\leq \delta<1/2$, then
\begin{equation}\label{eqn:expbound2}
\frac{e^{\kappa(\frac{1}{2}+\delta+ it)}-1}{\tfrac{1}{2}+\delta+ it} \ll \kappa \ll \frac{1}{T}
\end{equation}
because $e^z-1\ll |z|$ for $|z|\leq 2$. From \eqref{eqn:selberg}, \eqref{eqn:expbound1}, and \eqref{eqn:expbound2}, we deduce the following lemma.

\begin{lem}\label{lem:selberg}
Let $T\geq 2$. If $\,0\leq \delta<1/2$ and $\delta$ satisfies \eqref{deltamomentbound}, then  
\begin{align*}
\int_{0}^{\infty} & \bigg| \Delta_k \left(x + \frac{x}{T}\right)-\Delta_k (x) \bigg|^2\,\frac{dx}{x^{2+2\delta}} \\
& \ll \frac{1}{T^2}\int_0^T |\zeta(\tfrac{1}{2}+\delta+it)|^{2k}\,dt + \frac{1}{T^2} \sum_{\ell=0}^{\infty} \frac{1}{2^{2\ell}} \int_{2^{\ell}T}^{2^{\ell+1}T}  |\zeta(\tfrac{1}{2}+\delta+it)|^{2k}\,dt,
\end{align*}
with absolute implied constant.
\end{lem}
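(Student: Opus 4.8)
The plan is to obtain Lemma~\ref{lem:selberg} as an immediate consequence of the identity \eqref{eqn:selberg} together with the pointwise kernel bounds \eqref{eqn:expbound1} and \eqref{eqn:expbound2} already established above. Write $K(t) := \big(e^{\kappa(\frac12+\delta+it)}-1\big)/\big(\tfrac12+\delta+it\big)$ for the factor multiplying $\zeta^k(\tfrac12+\delta+it)$ on the right-hand side of \eqref{eqn:selberg}. First I would decompose the range of integration there as the (up to endpoints) disjoint union $(0,\infty) = (0,T] \cup \bigcup_{\ell\ge 0}(2^\ell T,\,2^{\ell+1}T]$, so that
\begin{equation*}
\int_0^\infty |K(t)|^2\,|\zeta(\tfrac12+\delta+it)|^{2k}\,dt = \int_0^T |K(t)|^2\,|\zeta(\tfrac12+\delta+it)|^{2k}\,dt + \sum_{\ell=0}^\infty \int_{2^\ell T}^{2^{\ell+1}T} |K(t)|^2\,|\zeta(\tfrac12+\delta+it)|^{2k}\,dt.
\end{equation*}

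Next I would estimate $K$ on each piece. On $(0,T]$, since $0\le\delta<1/2$, the bound \eqref{eqn:expbound2} gives $K(t)\ll 1/T$, hence $|K(t)|^2 \ll 1/T^2$ uniformly there; pulling this constant out bounds the first integral by $\frac{1}{T^2}\int_0^T |\zeta(\tfrac12+\delta+it)|^{2k}\,dt$. On $(2^\ell T,\,2^{\ell+1}T]$, the bound \eqref{eqn:expbound1} gives $K(t)\ll 1/(2^\ell T)$, hence $|K(t)|^2 \ll 2^{-2\ell}T^{-2}$ there; pulling this out and summing over $\ell\ge 0$ bounds the remaining sum by $\frac{1}{T^2}\sum_{\ell=0}^\infty 2^{-2\ell}\int_{2^\ell T}^{2^{\ell+1}T}|\zeta(\tfrac12+\delta+it)|^{2k}\,dt$. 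Adding the two contributions and invoking \eqref{eqn:selberg} yields exactly the asserted inequality, with absolute implied constant since the implied constants in \eqref{eqn:expbound1}--\eqref{eqn:expbound2} are absolute (and \eqref{eqn:selberg} already incorporates $\Delta_k^*(x)=\Delta_k(x)$ for a.e.\ $x$).

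This deduction is entirely routine bookkeeping—the substantive step, namely the Plancherel/Selberg identity \eqref{eqn:selberg}, has already been carried out—so there is no real obstacle here. The only points meriting a moment's care are that the dyadic decomposition legitimately exhausts $(0,\infty)$ by pieces on which the correct kernel bound applies (which is why the hypothesis $\delta<1/2$ is invoked in both \eqref{eqn:expbound1} and \eqref{eqn:expbound2}), and that \eqref{deltamomentbound} guarantees convergence of the $\ell$-sum on the right (its $\ell$th term is $O((2^\ell T)^{-1+\varepsilon})$), so that both sides of the stated inequality are finite.
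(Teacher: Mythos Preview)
Your proposal is correct and follows exactly the paper's approach: the paper simply states that the lemma follows from \eqref{eqn:selberg}, \eqref{eqn:expbound1}, and \eqref{eqn:expbound2}, and your write-up supplies the routine dyadic decomposition and kernel bound that make this explicit.
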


We now finish the proof of Theorem~\ref{thm:dkinshortintervalsnosup}. Let $T\geq 2$ and suppose that $\delta\geq 0$ satisfies \eqref{deltamomentbound}. Without loss of generality, we may assume that $\delta<1/2$ since reducing the value of $\delta$ improves the bound in the conclusion of Theorem~\ref{thm:dkinshortintervalsnosup}. Then Lemma~\ref{lem:selberg} and \eqref{deltamomentbound} imply
\begin{equation}\label{eqn:selberg2}
\int_{0}^{\infty} \bigg| \Delta_k \left(x + \frac{x}{T}\right)-\Delta_k (x) \bigg|^2\,\frac{dx}{x^{2+2\delta}} \ll \frac{1}{T^{1-\varepsilon}} + \frac{1}{T^{1-\varepsilon}} \sum_{\ell=0}^{\infty} \frac{1}{2^{\ell(1-\varepsilon)}}  \ll \frac{1}{T^{1-\varepsilon}},
\end{equation}
with implied constant depending only on the implied constant in \eqref{deltamomentbound}. Since the integrand in \eqref{eqn:selberg2} is nonnegative, we may truncate the integral to be over $[X,2X]$ and deduce that
\begin{equation*}
\int_{X}^{2X} \bigg| \Delta_k \left(x + \frac{x}{T}\right)-\Delta_k (x) \bigg|^2\,dx \ll \frac{X^{2+2\delta}}{T^{1-\varepsilon}}
\end{equation*}
for all $X>0$. Replacing $X$ by $X/2$, $X/4$, $X/8$,$\dots$, and adding the results leads to
\begin{equation*}
\int_{0}^{X} \bigg| \Delta_k \left(x + \frac{x}{T}\right)-\Delta_k (x) \bigg|^2\,dx \ll \frac{X^{2+2\delta}}{T^{1-\varepsilon}}.
\end{equation*}
We relabel $T$ as $1/\beta$ and arrive at
\begin{equation*}
\int_{0}^{X} |\Delta_k \left(x + \beta x\right)-\Delta_k (x) |^2\,dx \ll \beta^{1-\varepsilon} X^{2+2\delta} 
\end{equation*}
for all $\beta$ in the interval $[0,1/2]$. From this and Lemma~\ref{lem:saffarivaughan}, we see that if $X>0$ and $0< h\leq X/16$, then
\begin{align*}
\int_{X/2}^X |\Delta_k(x+h)-\Delta_k(x)|^2\,dx
& \ll \frac{X}{h} \int_0^{8h/X} \int_{0}^{X} |\Delta_k \left(x + \beta x\right)-\Delta_k (x) |^2\,dx\,d\beta \\
& \ll \frac{X}{h} \int_0^{8h/X} \beta^{1-\varepsilon} X^{2+2\delta} \,d\beta \\
& \ll h^{1-\varepsilon} X^{1+2\delta+\varepsilon},
\end{align*}
with implied constant depending only on the implied constant in \eqref{deltamomentbound}. Replacing $X$ by $2X$ completes the proof of Theorem~\ref{thm:dkinshortintervalsnosup}.

Corollary~\ref{cor:d3inshortintervals} follows from Theorem~\ref{thm:dkinshortintervalsnosup} and the theorem of Heath-Brown~\cite{heathbrown} (see also \S 7.22 of \cite{titchmarsh}) that implies that if $k=3$ then $\delta=1/12$ satisfies \eqref{deltamomentbound}. If the Lindel\"{o}f hypothesis is true, then $\delta=0$ satisfies \eqref{deltamomentbound}, and so Corollary~\ref{cor:dkinshortintervalsLH} holds.

Having proved Theorem~\ref{thm:dkinshortintervalsnosup} and its corollaries, we next prove Theorem~\ref{thm:dkinshortintervalsRH}. Assuming the Riemann hypothesis, Harper~\cite{harper} has shown that
\begin{equation*}
\int_0^{\tau} |\zeta(\tfrac{1}{2}+it)|^{2k} \,dt \ll_k \tau(\log \tau)^{k^2}
\end{equation*}
for all $\tau\geq 2$. This, of course, implies that $\delta=0$ satisfies \eqref{deltamomentbound}. From these and Lemma~\ref{lem:selberg} with $\delta=0$, we deduce that if the Riemann hypothesis is true, then
\begin{equation*}
\int_{0}^{\infty} \bigg| \Delta_k \left(x + \frac{x}{T}\right)-\Delta_k (x) \bigg|^2\,\frac{dx}{x^{2}}  \ll_k \frac{(\log T)^{k^2}}{T }  + \frac{1}{T } \sum_{\ell=0}^{\infty} \frac{(\log(2^{\ell+1}T))^{k^2}}{2^{ \ell}}  \ll \frac{(\log T)^{k^2}}{T }
\end{equation*}
for all $T\geq 2$. Truncating the integral to be over $[X,2X]$, we arrive at
\begin{equation*}
\int_{X}^{2X} \bigg| \Delta_k \left(x + \frac{x}{T}\right)-\Delta_k (x) \bigg|^2\,dx \ll  \frac{X^2(\log T)^{k^2}}{T }
\end{equation*}
for all $X>0$. Replacing $X$ by $X/2$, $X/4$, $X/8$,$\dots$, and adding the results leads to
\begin{equation*}
\int_{0}^{X} \bigg| \Delta_k \left(x + \frac{x}{T}\right)-\Delta_k (x) \bigg|^2\,dx \ll  \frac{X^2(\log T)^{k^2}}{T }.
\end{equation*}
We relabel $T$ as $1/\beta$ and arrive at
\begin{equation*}
\int_0^X |\Delta_k \left(x + \beta x\right)-\Delta_k (x) |^2 \,dx  \ll_k  X^{2 } \beta |\log \beta|^{k^2}
\end{equation*}
for all $\beta$ in the interval $(0,1/2]$. From this and Lemma~\ref{lem:saffarivaughan}, we see that if $X>0$ and $0< h\leq X/16$, then
\begin{align*}
\int_{X/2}^X |\Delta_k(x+h)-\Delta_k(x)|^2\,dx
& \ll \frac{X}{h} \int_0^{8h/X} \int_{0}^{X} |\Delta_k \left(x + \beta x\right)-\Delta_k (x) |^2\,dx\,d\beta \\
& \ll \frac{X}{h} \int_0^{8h/X} X^{2 } \beta |\log \beta|^{k^2} \,d\beta.
\end{align*}
We may evaluate the latter integral via repeated integration by parts, which leads to
\begin{equation*}
\int_{X/2}^X |\Delta_k(x+h)-\Delta_k(x)|^2\,dx \ll_k hX \left( \log \left( \frac{X}{8h}\right)\right)^{k^2}
\end{equation*}
for $0< h\leq X/16$. Replacing $X$ by $2X$ completes the proof of Theorem~\ref{thm:dkinshortintervalsRH}.

\section{The fourth moment of \texorpdfstring{$\Delta_k(x)$}{▲k(x)}}\label{section:fourthmoment}

In this section, we shall prove Theorem~\ref{thm:deltakfourthmmnt}. Suppose that $1\leq Y\leq T\leq X$. We apply Lemma~\ref{lemma:lester2.5} and use the inequality $|a+b|^4\ll |a|^4+|b|^4$ to write
\begin{equation*}
\begin{split}
& \frac{1}{X}\int_X^{2X} |\Delta_k(x)|^4\,dx \\
& \ll \frac{1}{X}\int_X^{2X} |Q_k( x; Y^k/(2\pi)^k )|^4\,dx  + \frac{1}{X}\int_X^{2X} |I_k(x;Y,T) |^4\,dx + \frac{1}{X}\int_X^{2X} |E_k(x;Y,T) |^4\,dx .
\end{split}
\end{equation*}
From this, \eqref{Ekbound}, and Lemma~\ref{lemma:Ik4thmoment}, we deduce that
\begin{equation}\label{4thmmntsplit}
\begin{split}
\frac{1}{X}\int_X^{2X} |\Delta_k(x)|^4\,dx \ll \frac{1}{X}\int_X^{2X}
& |Q_k( x; Y^k/(2\pi)^k )|^4\,dx + \frac{X^{2+\varepsilon}}{Y}\\
& + \frac{X^{4+\varepsilon}}{ Y^{2k+2}} + X^{\varepsilon}Y^{2k-4} + \frac{X^{2+\varepsilon}}{Y^4} + \frac{X^{4+\varepsilon}}{T^4}
\end{split}
\end{equation}
under the assumption of the Lindel\"{o}f hypothesis. To prove Theorem~\ref{thm:deltakfourthmmnt}, our main task in this section is to bound the first term on the right-hand side of \eqref{4thmmntsplit}. For brevity, in this section we set
\begin{equation}\label{Vdef}
V : = \left(\frac{Y}{2\pi}\right)^k,
\end{equation}
\begin{equation}\label{a1def}
a_1 = a_1(\mu,\nu,m,n;k) := \frac{d_k(\mu)d_k(\nu)d_k(m)d_k(n)}{(\mu\nu m n)^{\frac{1}{2}+\frac{1}{2k}}},
\end{equation}
and
\begin{equation}\label{X1def}
X_1=X_1(\mu,\nu,m,n;V,X):= \min\{2X,V/\mu,V/\nu,V/m,V/n\}\leq 2X.
\end{equation}
Use the definition \eqref{Qkdef} of $Q_k$, interchange the order of summation, and repeatedly apply the trigonometric identity $2\cos a \cos b =\cos(a+b) + \cos(a-b)$ to write
\begin{equation}\label{Qsplit}
\frac{1}{X}\int_X^{2X} |Q_k( x; V )|^4\,dx = \frac{1}{\pi^4k^2} \bigg( \frac{3}{8}S_1 + \frac{1}{2}S_2 + \frac{1}{8}S_3 \bigg),
\end{equation}
where $S_1$, $S_2$, and $S_3$ are defined by
\begin{equation}\label{S1def}
S_1 := \frac{1}{ X}  \sum_{\mu,\nu,m,n \leq V/X} a_1 \int_X^{X_1} x^{2-\frac{2}{k}} \cos\Big(2\pi k x^{1/k} \big(\mu^{1/k} +\nu^{1/k} - m^{1/k}-n^{1/k} \big)  \Big)\,dx,
\end{equation}
\begin{equation}\label{S2def}
S_2 := \frac{1}{ X} \sum_{\mu,\nu,m,n \leq V/X} a_1 \int_X^{X_1} x^{2-\frac{2}{k}} \cos\bigg(2\pi k x^{1/k} \big(\mu^{1/k} +\nu^{1/k} + m^{1/k}-n^{1/k} \big) +\frac{(k-3)\pi}{2} \bigg)\,dx,
\end{equation}
and
\begin{equation}\label{S3def}
S_3 := \frac{1}{ X} \sum_{\mu,\nu,m,n \leq V/X} a_1 \int_X^{X_1} x^{2-\frac{2}{k}} \cos\Big(2\pi k x^{1/k} \big(\mu^{1/k} +\nu^{1/k} + m^{1/k}+n^{1/k} \big) + (k-3)\pi  \Big)\,dx,
\end{equation}
where the summation indices $\mu,\nu,m,n$ run through positive integers.

Our first task is to estimate $S_1$, which is defined by \eqref{S1def}. We bound the right-hand side of \eqref{S1def} by taking the absolute value of each term. By symmetry, we may then assume without loss of generality that $\nu\leq \mu$, $n\leq m$, and $n\leq \nu$. We thus arrive at
\begin{equation}\label{S1split}
S_1 \ll S_{11}+S_{12},
\end{equation}
where $S_{11}$ and $S_{12}$ are defined by
\begin{equation*}
S_{11} : = \frac{1}{ X} \sum_{\substack{\mu,\nu,m,n \leq V/X \\ \nu\leq \mu \\ n\leq m \\ n=\nu }} a_1 \Bigg| \int_X^{X_1} x^{2-\frac{2}{k}} \cos\Big(2\pi k x^{1/k} \big(\mu^{1/k}  - m^{1/k}  \big)  \Big)\,dx \Bigg|
\end{equation*}
and
\begin{equation}\label{S12def}
S_{12} : = \frac{1}{ X} \sum_{\substack{\mu,\nu,m,n \leq V/X \\ \nu\leq \mu \\ n\leq m \\ n<\nu }} a_1 \Bigg| \int_X^{X_1} x^{2-\frac{2}{k}} \cos\Big(2\pi k x^{1/k} \big(\mu^{1/k} +\nu^{1/k} - m^{1/k}-n^{1/k} \big)  \Big)\,dx \Bigg|.
\end{equation}

To bound $S_{11}$, we further write
\begin{equation}\label{S11split}
S_{11}=S_{111}+S_{112},
\end{equation}
where $S_{111}$ is the part of $S_{11}$ with $m=\mu$ and $S_{112}$ is the part with $m\neq \mu$. Using the definitions \eqref{a1def} and \eqref{X1def}, we deduce that
\begin{equation}\label{S111bound}
S_{111} = \frac{1}{ X} \sum_{\substack{\mu,\nu,m,n \leq V/X \\ \nu\leq \mu \\ n\leq m \\ n=\nu \\ m=\mu}} a_1 \int_X^{X_1} x^{2-\frac{2}{k}} \,dx \leq \frac{1}{ X} \sum_{\substack{ m,n \leq V/X \\ n\leq m }} \frac{  d_k^2(m)d_k^2(n)}{(  m n)^{1+\frac{1}{ k}}} \int_X^{2X} x^{2-\frac{2}{k}} \,dx \ll X^{2-\frac{2}{k}}.
\end{equation}
On the other hand, to bound $S_{112}$, we may assume without loss of generality that $m<\mu$, and integrate by parts to arrive at
\begin{equation*}
S_{112} \ll X^{2-\frac{3}{k}}\sum_{\substack{\mu,\nu,m,n \leq V/X \\ \nu\leq \mu \\ n\leq m \\ n=\nu \\ m<\mu}} \frac{a_1 }{\mu^{1/k}-m^{1/k}} = X^{2-\frac{3}{k}}\sum_{\substack{\mu,m,n \leq V/X \\ n\leq \mu \\ n\leq m \\ m<\mu}} \frac{d_k(\mu)d_k(m)d_k^2(n)}{n^{1+\frac{1}{k}} (\mu m)^{\frac{1}{2}+\frac{1}{2k}}\big(\mu^{1/k}-m^{1/k}\big)} .
\end{equation*}
Since $\mu^{1/k}-m^{1/k}\gg (\mu-m)\mu^{\frac{1}{k}-1} $ for $\mu>m$ and $d_k(j)\ll j^{\varepsilon}$ for all positive integers $j$, it follows that
\begin{equation*}
S_{112} \ll X^{2-\frac{3}{k}} V^{\varepsilon} \sum_{\substack{\mu,m,n \leq V/X \\ n\leq \mu \\ n\leq m \\ m<\mu}} \frac{\mu^{\frac{1}{2}-\frac{3}{2k}} }{n^{1+\frac{1}{k}} m^{\frac{1}{2}+\frac{1}{2k}}(\mu-m)}.
\end{equation*}
The $m$-sum here is $O(1)$ by the Cauchy-Schwarz inequality, and so
\begin{equation*}
S_{112} \ll X^{2-\frac{3}{k}} V^{\varepsilon} \left( \frac{V}{X}\right)^{\frac{3}{2}-\frac{3}{2k}} = X^{\frac{1}{2}-\frac{3}{2k}} V^{\frac{3}{2}-\frac{3}{2k}+\varepsilon}.
\end{equation*}
It follows from this, \eqref{S111bound}, and \eqref{S11split} that
\begin{equation}\label{S11bound}
S_{11} \ll X^{2-\frac{2}{k}} + X^{\frac{1}{2}-\frac{3}{2k}} V^{\frac{3}{2}-\frac{3}{2k}+\varepsilon}.
\end{equation}

Having estimated $S_{11}$, we next bound $S_{12}$, which is defined by \eqref{S12def}. Let $\xi>0$ be a parameter, to be chosen later, such that $\xi<1$ and
\begin{equation}\label{xicondition}
\xi \left( \frac{V}{X}\right)^{1-\frac{1}{k}} =o(1)
\end{equation}
as $X\rightarrow \infty$. Define $\Lambda_1$ by
\begin{equation}\label{Lambda1def}
\Lambda_1= \Lambda_1(\mu,\nu,m,n;k) := \mu^{1/k} +\nu^{1/k} - m^{1/k}-n^{1/k}.
\end{equation}
Split the sum $S_{12}$, defined by \eqref{S12def}, and write
\begin{equation}\label{S12split}
S_{12} = S_{121}+S_{122},
\end{equation}
where $S_{121}$ is the part with $|\Lambda_1|\leq \xi$ and $S_{122}$ is the part with $|\Lambda_1|>\xi$.

To estimate $S_{121}$, we bound the integral in \eqref{S12def} trivially using \eqref{X1def}, and then use \eqref{a1def} to deduce that
\begin{equation*}
S_{121} \ll X^{2-\frac{2}{k}} \sum_{\substack{\mu,\nu,m,n \leq V/X \\ \nu\leq \mu \\ n\leq m \\ n<\nu \\ |\Lambda_1|\leq \xi}} a_1 \ll X^{2-\frac{2}{k}}V^{\varepsilon} \sum_{\substack{\mu,\nu,m,n \leq V/X \\ \nu\leq \mu \\ n\leq m \\ n<\nu \\ |\Lambda_1|\leq \xi}} \frac{1}{(\mu\nu m n)^{\frac{1}{2}+\frac{1}{2k}}}.
\end{equation*}
Note that the summation conditions imply that $\mu>1$. We partition the range of the summation variable $\mu$ into dyadic intervals $(1,2],$ $(2,4],$ $(4,8],\dots$ to write
\begin{equation}\label{S121partition}
S_{121} \ll X^{2-\frac{2}{k}}V^{\varepsilon} \sum_M \sum_{\substack{\mu,\nu,m,n \leq V/X \\ M<\mu \leq 2M \\ \nu\leq \mu \\ n\leq m \\ n<\nu \\ |\Lambda_1|\leq \xi}} \frac{1}{(\mu\nu m n)^{\frac{1}{2}+\frac{1}{2k}}},
\end{equation}
where $M\geq 1$ runs through the powers of $2$ less than or equal to $V/X$. Our assumption that $\xi<1$, the definition \eqref{Lambda1def} of $\Lambda_1$, and the conditions satisfied by the summation variables in \eqref{S121partition} imply that $\nu,n,m \ll\mu \ll M$. It follows from this and the polynomial identity $x^k-y^k=(x-y)(x^{k-1}+x^{k-2}y+\cdots+y^{k-1})$ that
\begin{equation}\label{Lambda1identity}
\Big|\big(\mu^{1/k} +\nu^{1/k} - n^{1/k}\big)^k -m \Big| \ll_k |\Lambda_1| \mu^{1-\frac{1}{k}} \ll \xi M^{1-\frac{1}{k}}.
\end{equation}
From this, \eqref{xicondition}, and the fact that $M\leq V/X$, we see for large enough $X$ that, for each triple $\mu,\nu,n$ in \eqref{S121partition}, there is at most one integer $m$ such that $|\Lambda_1|\leq \xi$, and such an $m$ must satisfy
\begin{equation*}
m \asymp \big(\mu^{1/k} +\nu^{1/k} - n^{1/k}\big)^k \asymp \mu
\end{equation*}
because $\nu>n$. Furthermore, if such an $m$ exists, then it follows from \eqref{Lambda1identity} that
\begin{equation}\label{Lambda1distance}
\Big\|\big(\mu^{1/k} +\nu^{1/k} - n^{1/k}\big)^k \Big\| \ll_k \xi M^{1-\frac{1}{k}},
\end{equation}
where $\|x\|$ denotes the distance from $x$ to the nearest integer. These and \eqref{S121partition} imply that
\begin{equation*}
S_{121} \ll X^{2-\frac{2}{k}}V^{\varepsilon} \sum_M \sum_{\substack{\mu,\nu,n \leq V/X \\ M<\mu \leq 2M \\ \nu\leq \mu \\ n<\nu \\ \mbox{\scriptsize{\eqref{Lambda1distance}}}}} \frac{1}{(\nu  n)^{\frac{1}{2}+\frac{1}{2k}} \mu^{1+\frac{1}{k}}}.
\end{equation*}
From this and Lemma~\ref{lemma:erdosturan1} with $W=M$, $\rho=O_k( \xi M^{1-\frac{1}{k}})$, and $\alpha= \nu^{1/k} - n^{1/k}$, we arrive at
\begin{equation}\label{S121erdosturan}
\begin{split}
S_{121} \ll X^{2-\frac{2}{k}}V^{\varepsilon} \sum_M \frac{1}{M^{1+\frac{1}{k}}}\sum_{\substack{\nu,n \leq V/X \\ \nu\leq 2M \\ n<\nu }} \frac{1}{(\nu  n)^{\frac{1}{2}+\frac{1}{2k}}} \Big(\xi M^{2-\frac{1}{k}}  + M^{\frac{2}{3}-\frac{1}{3k}} \big( \nu^{1/k} - n^{1/k}\big)^{1/3} \\
+ M^{\frac{1}{2}+\frac{1}{2k}} \big( \nu^{1/k} - n^{1/k}\big)^{-1/2}\Big).
\end{split}
\end{equation}
Recall that, as in \eqref{S121partition}, $M$ runs through the powers of $2$ in the interval $[1,V/X]$. Thus
\begin{equation}\label{S1211}
\sum_M \frac{1}{M^{1+\frac{1}{k}}}\sum_{\substack{\nu,n \leq V/X \\ \nu\leq 2M \\ n<\nu }} \frac{1}{(\nu  n)^{\frac{1}{2}+\frac{1}{2k}}} \Big(\xi M^{2-\frac{1}{k}} \Big) \ll \xi \sum_M M^{1-\frac{2}{k}} \sum_{\nu \leq 2M}  \frac{1}{\nu^{1/k}} \ll \xi\left( \frac{V}{X}\right)^{2-\frac{3}{k}}.
\end{equation}
Similarly, since $\big( \nu^{1/k} - n^{1/k}\big)^{1/3} \leq \nu^{1/(3k)}$, we have
\begin{equation}\label{S1212}
\sum_M \frac{1}{M^{1+\frac{1}{k}}}\sum_{\substack{\nu,n \leq V/X \\ \nu\leq 2M \\ n<\nu }} \frac{1}{(\nu  n)^{\frac{1}{2}+\frac{1}{2k}}} \Big( M^{\frac{2}{3}-\frac{1}{3k}} \big( \nu^{1/k} - n^{1/k}\big)^{1/3}\Big)
\ll \left( \frac{V}{X}\right)^{\frac{2}{3}-\frac{2}{k}} \log V
\end{equation}
(the factor $\log V$ is necessary only when $k=3$). To estimate the contribution of the term with $\big( \nu^{1/k} - n^{1/k}\big)^{-1/2}$ in \eqref{S121erdosturan}, we use the bound $\nu^{1/k} - n^{1/k} \gg (\nu-n)\nu^{\frac{1}{k}-1}$ to deduce that
\begin{equation*}
\begin{split}
\sum_{n<\nu} \frac{\big( \nu^{1/k} - n^{1/k}\big)^{-1/2}}{n^{\frac{1}{2}+\frac{1}{2k}}}
=\sum_{n<\nu/2} + \sum_{\nu/2<n<\nu}
& \ll  \frac{1}{\nu^{1/(2k)}} \sum_{n<\nu/2}\frac{1}{n^{\frac{1}{2}+\frac{1}{2k}}} + \frac{1}{\nu^{1/k}} \sum_{\nu/2<n<\nu} (\nu-n)^{-1/2} \\
& \ll \nu^{\frac{1}{2}-\frac{1}{k}}.
\end{split}
\end{equation*}
Hence
\begin{equation*}
\begin{split}
\sum_M \frac{1}{M^{1+\frac{1}{k}}}\sum_{\substack{\nu,n \leq V/X \\ \nu\leq 2M \\ n<\nu }} \frac{1}{(\nu  n)^{\frac{1}{2}+\frac{1}{2k}}}
& \Big( M^{\frac{1}{2}+\frac{1}{2k}} \big( \nu^{1/k} - n^{1/k}\big)^{-1/2}\Big)
\ll \sum_M \frac{1}{M^{\frac{1}{2}+\frac{1}{2k}}} \sum_{\nu\leq 2M} \frac{1}{\nu^{3/(2k)}} \\
& \ll \sum_M M^{\frac{1}{2}-\frac{2}{k}}
\ll \max\Big\{ \log V, (V/X)^{\frac{1}{2} -\frac{2}{k}}\Big\}.
\end{split}
\end{equation*}
From this, \eqref{S1212}, \eqref{S1211}, and \eqref{S121erdosturan}, we arrive at
\begin{equation}\label{S121puttogether}
S_{121} \ll X^{2-\frac{2}{k}}V^{\varepsilon}\bigg( \xi\left( \frac{V}{X}\right)^{2-\frac{3}{k}} + \left( \frac{V}{X}\right)^{\frac{2}{3}-\frac{2}{k}} \log V + \max\Big\{ \log V, (V/X)^{\frac{1}{2} -\frac{2}{k}}\Big\}\bigg) .
\end{equation}
We may assume that $V\geq X$ since otherwise $S_{121}=0$ by \eqref{S121partition}. Thus $(V/X)^{\frac{1}{2} -\frac{2}{k}} \leq (V/X)^{\frac{2}{3}-\frac{2}{k}}$, and \eqref{S121puttogether} simplifies to
\begin{equation}\label{S121bound}
S_{121} \ll \xi X^{1/k} V^{2-\frac{3}{k}+\varepsilon} + X^{4/3} V^{\frac{2}{3}-\frac{2}{k}+\varepsilon}.
\end{equation}

Having bounded the sum $S_{121}$ in \eqref{S12split}, we next estimate $S_{122}$, which is the part of \eqref{S12def} that has $|\Lambda_1|>\xi$. Recalling the definitions \eqref{X1def} of $X_1$ and \eqref{Lambda1def} of $\Lambda_1$, we estimate the integral in \eqref{S12def} via integration by parts and then use \eqref{a1def} to arrive at
\begin{equation}\label{S122IBP}
S_{122} \ll X^{2-\frac{3}{k}}\sum_{\substack{\mu,\nu,m,n \leq V/X \\ \nu\leq \mu \\ n\leq m \\ n<\nu \\ |\Lambda_1|>\xi }} \frac{a_1}{|\Lambda_1|} \ll X^{2-\frac{3}{k}}V^{\varepsilon} \sum_{\substack{\mu,\nu,m,n \leq V/X \\ \nu\leq \mu \\ n\leq m \\ n<\nu \\ |\Lambda_1|>\xi }} \frac{1}{(\mu\nu mn)^{\frac{1}{2}+\frac{1}{2k}} |\Lambda_1|}.
\end{equation}
We split the range of $|\Lambda_1|$ dyadically to deduce from \eqref{S122IBP} that
\begin{equation}\label{S122dyadic}
S_{122} \ll X^{2-\frac{3}{k}} V^{\varepsilon} \sum_{L>\xi/2} \frac{1}{L} \sum_{\substack{\mu,\nu,m,n \leq V/X \\ \nu\leq \mu \\ n\leq m \\ n<\nu \\ L<|\Lambda_1|\leq 2L }} \frac{1}{(\mu\nu mn)^{\frac{1}{2}+\frac{1}{2k}} },
\end{equation}
where $L$ runs through the numbers $2^j$ with $j\in \mathbb{Z}$. Now if $n<\nu \leq \mu$ and $\Lambda_1 \ll \mu^{1/k}$, then the definition \eqref{Lambda1def} of $\Lambda_1$ and the binomial theorem imply
\begin{equation}\label{mbinomial}
m = \big( \mu^{1/k}+\nu^{1/k}-n^{1/k}-\Lambda_1\big)^k =  \big( \mu^{1/k}+\nu^{1/k}-n^{1/k}\big)^k + O_k\big( |\Lambda_1|\mu^{1-\frac{1}{k}}\big).
\end{equation}
Let $\varepsilon_k>0$ be a small enough constant, depending only on $k$, such that if $|\Lambda_1|\leq 2\varepsilon_k\mu^{1/k}$, then the error term in \eqref{mbinomial} has absolute value $\leq \mu/2$. Split the $L$-sum in \eqref{S122dyadic} and write
\begin{equation}\label{S122split}
S_{122} \ll \Sigma_1 + \Sigma_2,
\end{equation}
where $\Sigma_1$ is the part with $L\leq \varepsilon_k \mu^{1/k}$ and $\Sigma_2$ is the part with $L> \varepsilon_k \mu^{1/k}$. To bound $\Sigma_1$, observe that if $n<\nu \leq \mu$ and $L\leq \varepsilon_k \mu^{1/k}$, then \eqref{mbinomial} implies that there are at most $1 + O_k ( L \mu^{1-\frac{1}{k}} )$ integers $m$ satisfying $|\Lambda_1|\leq 2L$. Moreover, each such $m$ satisfies $m\asymp \mu$ by \eqref{mbinomial}, the definition of $\varepsilon_k$ below \eqref{mbinomial}, and the fact that $n<\nu \leq \mu$. Thus
\begin{equation}\label{Sigma1bound}
\Sigma_1 \ll X^{2-\frac{3}{k}} V^{\varepsilon}  \sum_{\substack{\mu,\nu,n \leq V/X \\ \nu\leq \mu \\ n<\nu }} \frac{1}{(\nu n)^{\frac{1}{2}+\frac{1}{2k}} \mu^{1+\frac{1}{k}}} \sum_{\xi/2<L\leq \varepsilon_k \mu^{1/k}} \frac{1}{L}\Big( 1 + O_k\big( L\mu^{1-\frac{1}{k}}\big)\Big).
\end{equation}
Recall that, as in \eqref{S122dyadic}, $L$ runs through powers of $2$. Thus the number of terms in the $L$-sum in \eqref{Sigma1bound} is $\ll V^{\varepsilon} |\log \xi|$, and so
\begin{equation}\label{Sigma1bound2}
\begin{split}
\Sigma_1
& \ll X^{2-\frac{3}{k}} V^{\varepsilon} |\log \xi|  \sum_{\substack{\mu,\nu,n \leq V/X \\ \nu\leq \mu \\ n<\nu }} \frac{1}{(\nu n)^{\frac{1}{2}+\frac{1}{2k}} \mu^{1+\frac{1}{k}}} \bigg( \frac{1}{\xi} + \mu^{1-\frac{1}{k}}\bigg) \\
& \ll X^{2-\frac{3}{k}} V^{\varepsilon} |\log \xi|  \sum_{\substack{\mu,\nu \leq V/X \\ \nu\leq \mu }} \frac{1}{\nu^{1/k} \mu^{1+\frac{1}{k}}} \bigg( \frac{1}{\xi} + \mu^{1-\frac{1}{k}}\bigg)\\
& \ll \xi^{-1} X^{1-\frac{1}{k}}V^{1-\frac{2}{k}+\varepsilon} |\log \xi| + V^{2-\frac{3}{k}+\varepsilon} |\log \xi|.
\end{split}
\end{equation}
To bound the sum $\Sigma_2$ in \eqref{S122split}, ignore the conditions $L<|\Lambda_1|\leq 2L$ and $n\leq m$, and then evaluate the $L$-sum as a geometric series to deduce that
\begin{equation*}
\begin{split}
\Sigma_2
& \ll  X^{2-\frac{3}{k}} V^{\varepsilon}  \sum_{\substack{\mu,\nu,m,n \leq V/X \\ \nu\leq \mu \\  n<\nu  }} \frac{1}{(\mu\nu mn)^{\frac{1}{2}+\frac{1}{2k}} }\sum_{L>\varepsilon_k \mu^{1/k}} \frac{1}{L} \ll X^{2-\frac{3}{k}} V^{\varepsilon}  \sum_{\substack{\mu,\nu,m,n \leq V/X \\ \nu\leq \mu \\  n<\nu  }} \frac{1}{(\nu mn)^{\frac{1}{2}+\frac{1}{2k}} \mu^{\frac{1}{2}+\frac{3}{2k}}} \\
& \ll X^{2-\frac{3}{k}} V^{\varepsilon}  \sum_{\substack{\mu,\nu,m \leq V/X \\ \nu\leq \mu   }} \frac{1}{\nu^{1/k} m^{\frac{1}{2}+\frac{1}{2k}}  \mu^{\frac{1}{2}+\frac{3}{2k}}} \ll X^{2-\frac{3}{k}} V^{\varepsilon}  \sum_{\mu,m\leq V/X} \frac{\mu^{\frac{1}{2}-\frac{5}{2k}}}{ m^{\frac{1}{2}+\frac{1}{2k}}} \ll V^{2-\frac{3}{k}+\varepsilon}.
\end{split}
\end{equation*}
From this, \eqref{Sigma1bound2}, and \eqref{S122split}, we arrive at
\begin{equation}\label{S122bound}
S_{122} \ll \xi^{-1} X^{1-\frac{1}{k}}V^{1-\frac{2}{k}+\varepsilon} |\log \xi| + V^{2-\frac{3}{k}+\varepsilon} (1+|\log \xi|).
\end{equation}

This, \eqref{S121bound}, \eqref{S12split}, \eqref{S11bound}, and \eqref{S1split} now imply
\begin{equation}\label{S1bound}
\begin{split}
S_1 \ll X^{2-\frac{2}{k}} + X^{\frac{1}{2}-\frac{3}{2k}} V^{\frac{3}{2}-\frac{3}{2k}+\varepsilon} + \xi X^{1/k} V^{2-\frac{3}{k}+\varepsilon} + X^{4/3} V^{\frac{2}{3}-\frac{2}{k}+\varepsilon} \\
+ \xi^{-1} X^{1-\frac{1}{k}}V^{1-\frac{2}{k}+\varepsilon} |\log \xi| + V^{2-\frac{3}{k}+\varepsilon} (1+|\log \xi|).
\end{split}
\end{equation}
This completes our estimation of $S_1$.

Our next task is to bound $S_2$, which is defined by \eqref{S2def}. The procedure is similar to our estimation of $S_{12}$, which starts with \eqref{S12split}, and so we only present a sketch. Define $\Lambda_2$ by
\begin{equation}\label{Lambda2def}
\Lambda_2= \Lambda_2(\mu,\nu,m,n;k) := \mu^{1/k} +\nu^{1/k} + m^{1/k}-n^{1/k}
\end{equation}
and let $\xi$ be as in \eqref{xicondition}. Split the sum $S_2$ in \eqref{S2def} to write
\begin{equation}\label{S2split}
S_2=S_{21}+S_{22},
\end{equation}
where $S_{21}$ is the part with $|\Lambda_2|\leq \xi$ and $S_{22}$ is the part with $|\Lambda_2|>\xi$. To bound $S_{21}$, we may assume that $m\leq \nu\leq \mu$. We bound the integral trivially and partition the range of $\mu$ into dyadic intervals to deduce that, similarly to \eqref{S121partition}, we have
\begin{equation*}
S_{21} \ll X^{2-\frac{2}{k}}V^{\varepsilon} \sum_M \sum_{\substack{\mu,\nu,m,n \leq V/X \\ M<\mu \leq 2M \\ m\leq \nu\leq \mu \\ |\Lambda_2|\leq \xi}} \frac{1}{(\mu\nu m n)^{\frac{1}{2}+\frac{1}{2k}}},
\end{equation*}
where $M$ runs through the powers of $2$ in the interval $[1/2,V/X]$. For each triple $m,\nu,\mu$ in this sum, the condition \eqref{xicondition} ensures that there is at most one integer $n$ such that $|\Lambda_2|\leq \xi$, and such an $n$ satisfies $n\asymp \mu$. If such an $n$ exists, then
\begin{equation*}
\Big\|\big(\mu^{1/k} +\nu^{1/k} + m^{1/k}\big)^k \Big\| \ll_k \xi M^{1-\frac{1}{k}}.
\end{equation*}
It follows from these and Lemma~\ref{lemma:erdosturan1} that
\begin{equation*}
\begin{split}
S_{21} \ll X^{2-\frac{2}{k}}V^{\varepsilon} \sum_M \frac{1}{M^{1+\frac{1}{k}}}\sum_{\substack{\nu,m \leq V/X\\ m\leq \nu \leq 2M}} \frac{1}{(\nu m)^{\frac{1}{2}+\frac{1}{2k}}} \Big( \xi M^{2-\frac{1}{k}}  + M^{\frac{2}{3}-\frac{1}{3k}} \big( \nu^{1/k}+m^{1/k}\big)^{1/3} \\
+ M^{\frac{1}{2}+\frac{1}{2k}} \big( \nu^{1/k}+m^{1/k}\big)^{-1/2} \Big)
\end{split}
\end{equation*}
(to handle the case $M=1/2$, we note that the conclusion of Lemma~\ref{lemma:erdosturan1} holds trivially for $W=1/2$). By an argument similar to our proof that \eqref{S121erdosturan} implies \eqref{S121bound}, we arrive at
\begin{equation}\label{S21bound}
S_{21}\ll \xi X^{1/k} V^{2-\frac{3}{k}+\varepsilon} + X^{4/3} V^{\frac{2}{3}-\frac{2}{k}+\varepsilon}.
\end{equation}
Next, to estimate the sum $S_{22}$ in \eqref{S2split}, we bound the integral in \eqref{S2def} via integration by parts and split the range of $|\Lambda_2|$ dyadically to deduce that, similarly to \eqref{S122dyadic}, we have
\begin{equation*}
S_{22} \ll X^{2-\frac{3}{k}} V^{\varepsilon} \sum_{L>\xi/2} \frac{1}{L} \sum_{\substack{\mu,\nu,m,n \leq V/X \\ m\leq \nu\leq \mu \\  L<|\Lambda_2|\leq 2L }} \frac{1}{(\mu\nu mn)^{\frac{1}{2}+\frac{1}{2k}} }.
\end{equation*}
If $m\leq \nu\leq \mu$ and $|\Lambda_2|\ll \mu^{1/k}$, then the definition \eqref{Lambda2def} of $\Lambda_2$ implies that
\begin{equation*}
n= \big( \mu^{1/k} +\nu^{1/k} + m^{1/k} \big)^k + O_k\big(|\Lambda_2|\mu^{1-\frac{1}{k}}\big).
\end{equation*}
Hence, as in our arguments below \eqref{mbinomial}, there exists a constant $\varepsilon_k>0$ such that if $m\leq \nu\leq \mu$ and $L \leq \varepsilon_k\mu^{1/k}$, then there are at most $1+O_k(L\mu^{1-\frac{1}{k}})$ integers $n$ satisfying $|\Lambda_2|\leq 2L$, and each such $n$ satisfies $n\asymp \mu$. The estimations leading up to \eqref{S122bound} then show that
\begin{equation*}
S_{22}\ll \xi^{-1} X^{1-\frac{1}{k}}V^{1-\frac{2}{k}+\varepsilon} |\log \xi| + V^{2-\frac{3}{k}+\varepsilon} (1+|\log \xi|).
\end{equation*}
From this, \eqref{S21bound}, and \eqref{S2split}, we arrive at
\begin{equation}\label{S2bound}
S_2 \ll \xi X^{1/k} V^{2-\frac{3}{k}+\varepsilon} + X^{4/3} V^{\frac{2}{3}-\frac{2}{k}+\varepsilon} + \xi^{-1} X^{1-\frac{1}{k}}V^{1-\frac{2}{k}+\varepsilon} |\log \xi| + V^{2-\frac{3}{k}+\varepsilon} (1+|\log \xi|).
\end{equation}
This finishes our estimation of $S_2$.

It is left to estimate $S_3$, which is defined by \eqref{S3def}. We bound the right-hand side of \eqref{S3def} by taking the absolute value of each term. By symmetry, we may then assume without loss of generality that $n\leq m\leq \nu\leq \mu$. Recalling the definition \eqref{X1def} of $X_1$, we estimate the integral in \eqref{S3def} via integration by parts and then use \eqref{a1def} to deduce that
\begin{equation*}
S_3 \ll X^{2-\frac{3}{k}} \sum_{\substack{\mu,\nu,m,n\leq V/X \\ n\leq m\leq \nu\leq \mu }} \frac{a_1}{\mu^{1/k}} \ll X^{2-\frac{3}{k}} V^{\varepsilon} \sum_{\substack{\mu,\nu,m,n\leq V/X \\ n\leq m\leq \nu\leq \mu }} \frac{1}{(\nu m n)^{\frac{1}{2}+\frac{1}{2k}} \mu^{\frac{1}{2}+\frac{3}{2k}}}.
\end{equation*}
We estimate the $n$-sum, $m$-sum, $\nu$-sum, and $\mu$-sum, in that order, to arrive at
\begin{equation}\label{S3bound}
S_3 \ll X^{2-\frac{3}{k}} V^{\varepsilon} \sum_{\substack{\mu,\nu,m \leq V/X \\  m\leq \nu\leq \mu }} \frac{1}{m^{1/k}\nu^{\frac{1}{2}+\frac{1}{2k}} \mu^{\frac{1}{2}+\frac{3}{2k}}} \ll X^{2-\frac{3}{k}} V^{\varepsilon} \sum_{\substack{\mu,\nu \leq V/X \\  \nu\leq \mu }} \frac{\nu^{\frac{1}{2}-\frac{3}{2k}}}{ \mu^{\frac{1}{2}+\frac{3}{2k}}} \ll V^{2-\frac{3}{k}+\varepsilon}.
\end{equation}

Now from \eqref{4thmmntsplit}, \eqref{Qsplit}, \eqref{S1bound}, \eqref{S2bound}, \eqref{S3bound}, we conclude that if $1\leq Y\leq T\leq X$, $V$ is defined by \eqref{Vdef}, and $0<\xi<1$ such that \eqref{xicondition} holds, then
\begin{equation}\label{4thmmntgeneral}
\begin{split}
\frac{1}{X}\int_X^{2X} |\Delta_k(x)|^4\,dx \ll
& X^{2-\frac{2}{k}} + X^{\frac{1}{2}-\frac{3}{2k}} V^{\frac{3}{2}-\frac{3}{2k}+\varepsilon} + \xi X^{1/k} V^{2-\frac{3}{k}+\varepsilon} + X^{4/3} V^{\frac{2}{3}-\frac{2}{k}+\varepsilon} \\
& + \xi^{-1} X^{1-\frac{1}{k}}V^{1-\frac{2}{k}+\varepsilon} |\log \xi| + V^{2-\frac{3}{k}+\varepsilon} (1+|\log \xi|)+ \frac{X^{2+\varepsilon}}{Y} \\
& + \frac{X^{4+\varepsilon}}{ Y^{2k+2}} + X^{\varepsilon}Y^{2k-4} + \frac{X^{2+\varepsilon}}{Y^4} + \frac{X^{4+\varepsilon}}{T^4} 
\end{split}
\end{equation}
under the assumption of the Lindel\"{o}f hypothesis. We now choose $\xi=X^{-\frac{1}{k}-\varepsilon}$, $T=X^{\frac{1}{2}+\frac{1}{2k}+\varepsilon}$, and $Y=X^{1/(k-1)}$, so that \eqref{Vdef} gives $V\ll X^{k/(k-1)}$, and the conditions $1\leq Y\leq T\leq X$ and \eqref{xicondition} are satisfied. With these choices for the parameters, \eqref{4thmmntgeneral} gives
\begin{equation*}
\frac{1}{X}\int_X^{2X} |\Delta_k(x)|^4\,dx \ll X^{2-\frac{1}{k-1}+\varepsilon}.
\end{equation*}
This completes the proof of Theorem~\ref{thm:deltakfourthmmnt}.

\section{Intervals containing no sign changes}\label{section:main}

To complete the proofs of Theorems~\ref{thm:deltaknosignchangeRH}, \ref{thm:deltaknosignchange}, and \ref{thm:delta3nosignchangeRH}, we first bound the integral
\begin{equation}\label{supmeanvalue}
\frac{1}{X}\int_X^{2X} \sup_{0\leq h\leq H} \Big(\Delta_k(x+h)-\Delta_k(x)\Big)^2 \,dx.
\end{equation}
We do this by applying a method of Heath-Brown and Tsang~\cite{hbtsang} that enables us to use Theorems~\ref{thm:dkinshortintervalsnosup} and \ref{thm:dkinshortintervalsRH} to bound \eqref{supmeanvalue}.

Suppose that $1\leq H\leq X/8$. We write $H$ as
\begin{equation}\label{H2lb}
H=2^{\ell}b
\end{equation}
for some unique $\ell,b$ such that $\ell$ is a nonnegative integer and $1\leq b<2$. The definition \eqref{deltakdef} of $\Delta_k(x)$ implies that
\begin{equation}\label{deltakdefpoly}
\Delta_k(x) = \sum_{n\leq x} d_k(n) - xP_k(\log x)
\end{equation}
for some polynomial $P_k$ of degree $k-1$. Thus $\Delta_k(x)$ is continuous except at points $x=n$ with $n$ an integer, where it is continuous from the right and has left-hand limit $\Delta_k(n)-d_k(n)$. It follows that there is an $h_0\in [0,H]$ such that either
\begin{equation}\label{EVT1}
\sup_{0\leq h\leq H} |\Delta_k(x+h)-\Delta_k(x)|^2 = |\Delta_k(x+h_0)-\Delta_k(x)|^2
\end{equation}
or
\begin{equation}\label{EVT2}
\sup_{0\leq h\leq H} |\Delta_k(x+h)-\Delta_k(x)|^2 = |\Delta_k(x+h_0)-d_k(x+h_0)-\Delta_k(x)|^2.
\end{equation}

Suppose first that \eqref{EVT1} holds. By \eqref{H2lb} and the fact that $0\leq h_0\leq H$, we have
\begin{equation}\label{h0}
jb\leq h_0 \leq (j+1)b
\end{equation}
for some integer $j$ satisfying $0\leq j\leq 2^{\ell}-1$. The expression \eqref{deltakdefpoly} and the mean value theorem of differential calculus imply that
\begin{equation*}
\Delta_k(u_2)-\Delta_k(u_1) = \sum_{u_1<n\leq u_2} d_k(n) + O((u_2-u_1) \log^k(X+2))
\end{equation*}
for $1\leq u_1\leq u_2 \ll X$. Since $d_k(n)\geq 0$ for all $n$, it follows that
\begin{equation}\label{deltakdifferencebound}
\Delta_k(u_2) \geq \Delta_k(u_1)- O((u_2-u_1) \log^k(X+2))
\end{equation}
for $1\leq u_1\leq u_2 \ll X$. If $\Delta_k(x+h_0)\geq \Delta_k(x)$, then \eqref{h0} and \eqref{deltakdifferencebound} give
\begin{equation*}
0\leq \Delta_k(x+h_0)-\Delta_k(x) \leq \Delta_k(x+(j+1)b)-\Delta_k(x) + O(b \log^k(X+2)),
\end{equation*}
while if $\Delta_k(x+h_0)\leq \Delta_k(x)$, then \eqref{h0} and \eqref{deltakdifferencebound} imply
\begin{equation*}
0\geq \Delta_k(x+h_0)-\Delta_k(x) \geq \Delta_k(x+ jb)-\Delta_k(x) - O(b \log^k(X+2)).
\end{equation*}
In either case, we have
\begin{equation*}
|\Delta_k(x+h_0)-\Delta_k(x)| \leq \max_{0\leq j\leq 2^{\ell}}|\Delta_k(x+ jb)-\Delta_k(x)| + O( \log^k(X+2)).
\end{equation*}
From this and \eqref{EVT1}, we arrive at
\begin{equation}\label{supasmax}
\sup_{0\leq h\leq H} |\Delta_k(x+h)-\Delta_k(x)|^2 \ll \max_{0\leq j\leq 2^{\ell}}|\Delta_k(x+ jb)-\Delta_k(x)|^2 +  O(X^{\varepsilon}).
\end{equation}

We have shown that if \eqref{EVT1} holds, then \eqref{supasmax} is true. Now suppose that \eqref{EVT2} holds and $x+h_0$ is a positive integer. Then
$$
\Delta_k(x+h_0)-d_k(x+h_0)-\Delta_k(x)<0
$$
since otherwise $|\Delta_k(x+h_0)-\Delta_k(x)|>|\Delta_k(x+h_0)-d_k(x+h_0)-\Delta_k(x)|$, which contradicts \eqref{EVT2}. Hence \eqref{h0} and \eqref{deltakdifferencebound} imply
$$
0>\Delta_k(x+h_0)-d_k(x+h_0)-\Delta_k(x) \geq \Delta_k(x+ jb)-d_k(x+h_0)-\Delta_k(x) - O(b \log^k(X+2)),
$$
and \eqref{supasmax} again follows because $d_k(x+h_0)\ll X^{\varepsilon}$. We have thus proved that \eqref{supasmax} holds in either case. Consequently, for each $x$ with $X\leq x\leq 2X$, there is an integer $j_0=j_0(x)$ such that
\begin{equation}\label{j0bound}
0\leq j_0\leq 2^{\ell}
\end{equation}
and
\begin{equation}\label{supasmax2}
\sup_{0\leq h\leq H} |\Delta_k(x+h)-\Delta_k(x)|^2 \ll  |\Delta_k(x+ j_0b)-\Delta_k(x)|^2 +  O(X^{\varepsilon}).
\end{equation}

This, by itself, does not enable us to use Theorems~\ref{thm:dkinshortintervalsnosup} or \ref{thm:dkinshortintervalsRH} to bound \eqref{supmeanvalue} because $j_0$ might depend on $x$. To get around this difficulty, we use the technique of Heath-Brown and Tsang~\cite{hbtsang} that uses the binary expansion of $j_0$ and the Cauchy-Schwarz inequality to bound the right-hand side of \eqref{supasmax2} by a sum of quantities of the form $|\Delta_k(x+ h_1)-\Delta_k(x+h_2)|^2$ with $h_1$ and $h_2$ independent of $x$.

Since $j_0$ is an integer satisfying \eqref{j0bound}, it has a unique binary expansion
\begin{equation}\label{jdefforlemma:dyadic}
j_0 = \sum_{\mu \in U} 2^{\ell-\mu}
\end{equation}
for some subset $U$ of $\{0,1,2,\dots,\ell\}$. We let
\begin{equation}\label{numudefforlemma:dyadic}
\nu_{\mu} = \sum_{\substack{m\in U \\ m<  \mu}} 2^{\mu-m}
\end{equation}
for each $\mu\in U$, and write $\Delta_k(x+j_0b)-\Delta_k(x)$ as a telescoping sum
\begin{equation*}
\Delta_k(x+j_0b)-\Delta_k(x) = \sum_{\mu\in U} \Big( \Delta_k \big(x+(\nu_{\mu}+1) 2^{\ell-\mu}  b \big)-\Delta_k\big(x+  \nu_{\mu} 2^{\ell-\mu} b \big) \Big).
\end{equation*}
It follows from this and the Cauchy-Schwarz inequality that
\begin{equation}\label{cauchyforlemma:dyadic}
|\Delta_k(x+j_0b)-\Delta_k(x)|^2 \leq (\ell+1) \sum_{\mu\in U} \big| \Delta_k \big(x+(\nu_{\mu}+1) 2^{\ell-\mu}  b \big)-\Delta_k\big(x+  \nu_{\mu} 2^{\ell-\mu} b \big) \big|^2.
\end{equation}
Note that if $0\in U$, then $U=\{0\}$ by \eqref{j0bound} and \eqref{jdefforlemma:dyadic}. In this case, $\nu_0=0$ by \eqref{numudefforlemma:dyadic}. On the other hand, if $0\not\in U$, then $\nu_{\mu}<2^{\mu}$ by \eqref{numudefforlemma:dyadic}. In either case, it holds that $0\leq \nu_{\mu}<2^{\mu}$ for all $\mu\in U$. Thus, by including all possible values for $\mu$ and $\nu_{\mu}$, we deduce from \eqref{cauchyforlemma:dyadic} that
\begin{equation*}
|\Delta_k(x+j_0b)-\Delta_k(x)|^2 \leq (\ell+1) \sum_{0\leq \mu\leq \ell} \sum_{0\leq \nu<2^{\mu}} |\Delta_k(x+(\nu+1)2^{\ell-\mu}b)-\Delta_k(x+\nu 2^{\ell-\mu}b)|^2,
\end{equation*}
where the indices of summation $\mu$ and $\nu$ run through integers. From this and \eqref{supasmax2}, we arrive at
\begin{align*}
& \frac{1}{X}\int_X^{2X} \sup_{0\leq h\leq H} |\Delta_k(x+h)-\Delta_k(x)|^2 \,dx\\
& \ll (\ell+1) \sum_{0\leq \mu\leq \ell} \sum_{0\leq \nu<2^{\mu}} \frac{1}{X}\int_X^{2X}|\Delta_k(x+(\nu+1)2^{\ell-\mu}b)-\Delta_k(x+\nu 2^{\ell-\mu}b)|^2\,dx +  O(X^{\varepsilon}).
\end{align*}
This and a change of variables $x\mapsto x-\nu 2^{\ell-\mu}b$ leads to
\begin{align*}
& \frac{1}{X}\int_X^{2X} \sup_{0\leq h\leq H} |\Delta_k(x+h)-\Delta_k(x)|^2 \,dx\\
& \ll (\ell+1) \sum_{0\leq \mu\leq \ell} \sum_{0\leq \nu<2^{\mu}} \frac{1}{X}\int_{X+\nu 2^{\ell-\mu}b}^{2X+\nu 2^{\ell-\mu}b}|\Delta_k(x+ 2^{\ell-\mu}b)-\Delta_k(x )|^2\,dx +  O(X^{\varepsilon}).
\end{align*}

To bound the latter integral, we may apply any of Corollary~\ref{cor:d3inshortintervals}, Corollary~\ref{cor:dkinshortintervalsLH}, or Theorem~\ref{thm:dkinshortintervalsRH} because $1\leq 2^{\ell-\mu}b \leq X/8$ for all $\mu\in\{0,1,\dots, \ell\}$ by \eqref{H2lb} and our assumption that $H\leq X/8$. Applying Corollary~\ref{cor:d3inshortintervals} gives
\begin{align*}
\frac{1}{X}\int_X^{2X} \sup_{0\leq h\leq H} |\Delta_3(x+h)-\Delta_3(x)|^2 \,dx & \ll_{\varepsilon} (\ell+1) \sum_{0\leq \mu\leq \ell} \sum_{0\leq \nu<2^{\mu}}2^{\ell-\mu}b X^{\frac{1}{6}+\varepsilon}   +   X^{\varepsilon}  \\
& = (\ell+1)^2 2^{\ell }b X^{\frac{1}{6}+\varepsilon} +   X^{\varepsilon} .
\end{align*}
From this and \eqref{H2lb}, we deduce that if $1\leq H\leq X/8$, then
\begin{equation*}
\frac{1}{X}\int_X^{2X} \sup_{0\leq h\leq H} |\Delta_3(x+h)-\Delta_3(x)|^2 \,dx \ll_{\varepsilon} H X^{\frac{1}{6}+\varepsilon}.
\end{equation*}
This bound holds true unconditionally, i.e., independently of any unproved conjecture. Similarly, applying Corollary~\ref{cor:dkinshortintervalsLH} instead of Corollary~\ref{cor:d3inshortintervals}, we see that if $1\leq H\leq X/8$, then
\begin{equation}\label{eqn:dkshortintervalssup}
\frac{1}{X}\int_X^{2X} \sup_{0\leq h\leq H} |\Delta_k(x+h)-\Delta_k(x)|^2 \,dx \ll_{k,\varepsilon} H X^{ \varepsilon}
\end{equation}
provided that the Lindel\"{o}f hypothesis is true. On the other hand, applying Theorem~\ref{thm:dkinshortintervalsRH} and arguing in a similar way, we deduce that if $1\leq H\leq X/8$, then
\begin{equation}\label{eqn:dkshortintervalssupRH}
\frac{1}{X}\int_X^{2X} \sup_{0\leq h\leq H} |\Delta_k(x+h)-\Delta_k(x)|^2 \,dx \ll_{k } H (\log X)^{k^2+2}    +   X^{\varepsilon}
\end{equation}
provided that the Riemann hypothesis is true.

We now have all the ingredients needed to prove Theorems~\ref{thm:deltaknosignchangeRH}, \ref{thm:deltaknosignchange}, and \ref{thm:delta3nosignchangeRH} using the method of Heath-Brown and Tsang~\cite{hbtsang} for finding intervals containing no sign changes. Let $\eta>0$ be an arbitrarily small (fixed) constant. Define $G_k(x)$ by
\begin{equation}\label{Gkdef}
G_k(x):= |\Delta_k(x)|-\left(\frac{1}{2}C_k-\eta \right)x^{\frac{1}{2}-\frac{1}{2k}},
\end{equation}
where the constant $C_k$ is defined by \eqref{Ckdef}. Let $H\geq 1$ be a parameter to be chosen later, and define $W_k(x)$ by 
\begin{equation}\label{Wkdef}
W_k(x)=W_k(x;H):= G_k^2(x)-\sup_{0\leq h\leq H} \Big(G_k(x+h)-G_k(x)\Big)^2 -\bigg( \frac{1}{2}C_k x^{\frac{1}{2}-\frac{1}{2k}}\bigg)^2.
\end{equation}
Let $\mathcal{S}$ be the set
\begin{equation}\label{Sdef}
\mathcal{S}:=\{ x\in [X,2X]: W_k(x)>0\}.
\end{equation}
By the definition \eqref{Wkdef} of $W_k$, if $x\in \mathcal{S}$, then
\begin{enumerate}
\item[(i)] $|G_k(x)|> \underset{0\leq h\leq H}{\sup} |G_k(x+h)-G_k(x)|$, and
\item[(ii)] $|G_k(x)|>\frac{1}{2}C_k x^{\frac{1}{2}-\frac{1}{2k}}$.
\end{enumerate}
Property (i) implies that $G_k(x)$ has the same sign as $G_k(y)$ for all $y\in [x,x+H]$. Property (ii) implies that $G_k(x)>0$, since otherwise the definition \eqref{Gkdef} of $G_k$ would imply
\begin{equation*}
|G_k(x)|= \left(\frac{1}{2}C_k-\eta \right)x^{\frac{1}{2}-\frac{1}{2k}} - |\Delta_k(x)|<\frac{1}{2}C_k x^{\frac{1}{2}-\frac{1}{2k}},
\end{equation*}
which negates (ii). Thus, if $x\in \mathcal{S}$, then $G_k(y)>0$ for all $y\in [x,x+H]$. By \eqref{Gkdef}, this means that if $x\in \mathcal{S}$, then
\begin{equation}\label{Deltaklower}
|\Delta_k(y)| > \left(\frac{1}{2}C_k-\eta \right)y^{\frac{1}{2}-\frac{1}{2k}}
\end{equation}
for all $y\in [x,x+H]$. If \eqref{Deltaklower} holds for all $y\in [x,x+H]$, then $\Delta_k$ does not change sign in $[x,x+H]$ because if $\Delta_k$ has a jump discontinuity at $y$, then the jump has size $d_k(y) \ll y^{\varepsilon}$. Hence, to show the existence of an interval of length $H$ on which $\Delta_k$ does not change sign, it suffices to prove that $\mathcal{S}$ is nonempty. We will in fact do more than this by finding a lower bound for the Lebesgue measure of $\mathcal{S}$. We will choose $H=X^{1-\frac{1}{k}-\varepsilon}$ to prove Theorem~\ref{thm:deltaknosignchange} and $H=c_0\eta X^{1-1/k}(\log X)^{-k^2-2}$ for a suitable constant $c_0>0$ to prove Theorems~\ref{thm:deltaknosignchangeRH} and \ref{thm:delta3nosignchangeRH}.

To find a lower bound for the Lebesgue measure of $\mathcal{S}$, first observe that the definitions \eqref{Wkdef} of $W_k$ and \eqref{Sdef} of $\mathcal{S}$ and the Cauchy-Schwarz inequality imply 
\begin{equation}\label{Wkupper}
\int_X^{2X}W_k(x)\,dx \leq \int_{\mathcal{S}} W_k(x)\,dx \leq \int_{\mathcal{S}} G^2_k(x)\,dx \leq \mathcal{M}^{1/2} \Bigg( \int_X^{2X} G_k^4(x) \,dx\Bigg)^{1/2},
\end{equation}
where $\mathcal{M}$ is the Lebesgue measure of $\mathcal{S}.$ 
Therefore, a lower bound for the integral of $W_k $ together with an upperbound for the fourth moment of $G_k$ gives a lowerbound for $\mathcal{M}$. Now the definition \eqref{Gkdef} of $G_k$, the inequality $|a+b|^4\ll |a|^4+|b|^4$, and Theorem~\ref{thm:deltakfourthmmnt} give
\begin{equation}\label{Gk4thmmnt}
\int_X^{2X} G_k^4(x)\,dx \ll   X^{3-\frac{1}{k-1}+\varepsilon}
\end{equation}
provided that the Lindel\"{o}f hypothesis is true.

It is left to find a lower bound for the integral of $W_k$ in \eqref{Wkupper}. We do this by estimating the integrals of each of the terms in the definition \eqref{Wkdef} of $W_k$. For the first term, Tong's formula~\eqref{Tong2ndmoment}, the definition \eqref{Gkdef} of $G_k$, and the Cauchy-Schwarz inequality imply
\begin{equation}\label{Tong2ndmoment2}
\begin{split}
\int_X^{2X} \Big( G_k(x) \Big)^2 \,dx \geq
& \int_X^{2X} | \Delta_k(x) |^2 \,dx + \left(\frac{1}{2}C_k-\eta \right)^2 \int_X^{2X} x^{1-\frac{1}{k}}\,dx \\
& - 2 \Bigg(\int_X^{2X} | \Delta_k(x) |^2 \,dx\Bigg)^{1/2} \Bigg( \int_X^{2X} \left(\frac{1}{2}C_k-\eta \right)^2 x^{1-\frac{1}{k}}\,dx\Bigg)^{1/2} \\
\geq & (1+o(1))\left(\frac{1}{2}C_k+\eta \right)^2 \int_X^{2X} x^{1-\frac{1}{k}}\,dx.
\end{split}
\end{equation}
To estimate the integral of the second term in \eqref{Wkdef}, observe that the mean value theorem of differential calculus implies
\begin{equation*}
(x+h)^{\frac{1}{2}-\frac{1}{2k}}-x^{\frac{1}{2}-\frac{1}{2k}} \ll_k h x^{-\frac{1}{2}-\frac{1}{2k}}
\end{equation*}
for $h\geq 0$. It follows from this, the definition \eqref{Gkdef} of $G_k$, and the inequalities $||a|-|b||\leq |a-b|$ and $|a+b|^2\ll |a|^2+|b|^2$ that
\begin{equation}\label{eqn:Gkshortintervalbound}
\sup_{0\leq h\leq H} \Big(G_k(x+h)-G_k(x)\Big)^2 \ll \sup_{0\leq h\leq H} \Big(\Delta_k(x+h)-\Delta_k(x)\Big)^2 + H^2x^{-1-\frac{1}{k}}.
\end{equation}
We will use this shortly to show that we can choose the parameter $H\geq 1$ in such a way that
\begin{equation}\label{eqn:chooseH}
\int_X^{2X}\sup_{0\leq h\leq H} \Big(G_k(x+h)-G_k(x)\Big)^2 \leq \frac{1}{2}C_k\eta \int_X^{2X} x^{1-\frac{1}{k}}\,dx.
\end{equation}
If \eqref{eqn:chooseH} holds, then \eqref{Wkdef}, \eqref{Tong2ndmoment2}, and \eqref{eqn:chooseH} imply
\begin{equation}\label{Wklower}
\int_X^{2X}W_k(x)\,dx \geq  (1+o_{\eta}(1))\eta \left(\frac{1}{2}C_k+\eta\right) \int_X^{2X} x^{1-\frac{1}{k}}\,dx.
\end{equation}

From this, \eqref{Wkupper}, and \eqref{Gk4thmmnt}, we deduce that if $H\geq 1$ satisfies \eqref{eqn:chooseH} and LH is true, then
\begin{equation*}
\mathcal{M} \gg  X^{1+\frac{1}{k-1}-\frac{2}{k}-\varepsilon},
\end{equation*}
where we recall that $\mathcal{M}$ is the Lebesgue measure of $\mathcal{S}$. Since each $x\in \mathcal{S}$ has the property that \eqref{Deltaklower} holds for all $y\in [x,x+H]$, it follows that there are at least $\gg \mathcal{M}/H$ disjoint subintervals of $[X,2X]$ of length $H$ such that \eqref{Deltaklower} holds for all $y$ in the subinterval. If $k\geq 3$ and the Lindel\"{o}f hypothesis is true, then \eqref{eqn:dkshortintervalssup} and \eqref{eqn:Gkshortintervalbound} imply that $H=X^{1-\frac{1}{k}-\varepsilon}$ satisfies \eqref{eqn:chooseH} for large enough $X$, and this proves Theorem~\ref{thm:deltaknosignchange}. Moreover, if $k\geq 3$ and the Riemann hypothesis is true, then, by \eqref{eqn:dkshortintervalssupRH} and \eqref{eqn:Gkshortintervalbound}, there exists a small enough constant $c_0>0$ depending only on $k$ such that if
\begin{equation}\label{Hchoice}
H=c_0\eta X^{1-\frac{1}{k}}(\log X)^{-k^2-2},
\end{equation}
then \eqref{eqn:chooseH} holds for large enough $X$. This completes the proof of Theorem~\ref{thm:deltaknosignchangeRH}.

To prove Theorem~\ref{thm:delta3nosignchangeRH}, we argue as in equation (7.5) of \cite{CTZ} and use H\"{o}lder's inequality instead of the Cauchy-Schwarz inequality in \eqref{Wkupper} to deduce that
\begin{equation}\label{Wkupper2}
\int_X^{2X}W_k(x)\,dx \leq \int_{\mathcal{S}} W_k(x)\,dx \leq \int_{\mathcal{S}} G^2_k(x)\,dx \leq \mathcal{M}^{1/3} \Bigg( \int_X^{2X} |G_k(x)|^3 \,dx\Bigg)^{2/3}.
\end{equation}
The definition \eqref{Gkdef} of $G_k$ with $k=3$, the inequality $|a+b|^3\ll |a|^3+|b|^3$, and \eqref{HBbound} give
\begin{equation*}
\int_X^{2X} |G_3(x)|^3 \,dx \ll X^{2+\varepsilon}.
\end{equation*}
From this, \eqref{Wklower}, and \eqref{Wkupper2}, we deduce that
\begin{equation*}
\mathcal{M} \gg X^{1-\varepsilon} 
\end{equation*}
for $k=3$ provided that the Riemann hypothesis is true and $H$ is given by \eqref{Hchoice} with $k=3$. It follows that there are at least $\gg \mathcal{M}/H \gg X^{\frac{1}{3}-\varepsilon}$ disjoint subintervals of $[X,2X]$ of length $H$ such that \eqref{Deltaklower} holds for all $y$ in the subinterval. This proves Theorem~\ref{thm:delta3nosignchangeRH}.


\begin{thebibliography}{99}

\bibitem{bettinconrey} Sandro Bettin and J. Brian Conrey. \textit{Averages of long Dirichlet polynomials}. Riv. Mat. Univ. Parma \textbf{12} (2021), no. 1, 1--27.


\bibitem{CTZ} Xiaodong Cao, Yoshio Tanigawa, Wenguang Zhai,  \textit{On the mean square of an arithmetical error term of the Selberg class in short intervals}. International Journal of Number Theory, Vol. \textbf{12} (2016), 1675-1701.

\bibitem{cramer} Harald Cram\'{e}r. \textit{\"{U}ber zwei S\"{a}tze des Herrn G. H. Hardy}. Math. Z. \textbf{15} (1922), no. 1, 201--210.

\bibitem{davenport} H. Davenport. \textit{Multiplicative Number Theory}. Third Edition. Grad. Texts in Math. 74, Springer, New York, 2000.

\bibitem{Ford} Kevin Ford. \textit{Vinogradov's integral and bounds for the Riemann zeta function}. Proc. London Math. Soc. (3) \textbf{85} (2002), no. 3, 565–633.

\bibitem{fan} Fan Ge and Steven M. Gonek. \textit{The mean square discrepancy in the divisor problem}. Trans. Amer. Math. Soc. \textbf{373} (2020), 4713--4734.

\bibitem{goldstonsuriajaya} D. A. Goldston and A. I. Suriajaya. \textit{On an average Goldbach representation formula of Fujii}. Nagoya Math J. in press. https://arxiv.org/abs/2110.14250

\bibitem{goldstonvaughan} D. A. Goldston and R. C. Vaughan. \textit{On the Montgomery-Hooley asymptotic formula}. In: Sieve Methods, Exponential Sums, and their Applications in Number Theory. London Mathematical Society Lecture Note Series 237, Cambridge University Press, Cambridge, 1997, pp. 117--142.

\bibitem{hafner} J. L. Hafner. \textit{New omega results for two classical lattice point problems}. Invent. Math. \textbf{63} (1981), 181--186.

\bibitem{harper} A. J. Harper. \textit{Sharp conditional bounds for moments of the Riemann zeta function}. preprint (2013). https://arxiv.org/abs/1305.4618

\bibitem{heathbrown} D. R. Heath-Brown. \textit{Mean values of the zeta function and divisor problems}. In: Recent Progress in Analytic Number Theory, Vol. 1 (Durham, 1979), Academic Press, London-New York, 1981, pp. 115--119.

\bibitem{heathbrown2} D. R. Heath-Brown. \textit{The distribution and moments of the error term in the Dirichlet divisor problem}. Acta Arith. \textbf{60} (1992), 389--415.

\bibitem{hbtsang} D. R. Heath-Brown and K. Tsang. \textit{Sign changes of $E(T)$, $\Delta(x)$, and $P(x)$}. J. Number Theory \textbf{49} (1994), 73--83.

\bibitem{huxley1} M. N. Huxley. \textit{Exponential sums and lattice points III}. Proc. London Math. Soc. (3) \textbf{87} (2003), 591--609.

\bibitem{huxley2} M. N. Huxley. \textit{Exponential sums and the Riemann zeta-function V}. Proc. London Math. Soc. (3) \textbf{90} (2005), 1--41.

\bibitem{ivic} Aleksandar Ivi\'{c}. \textit{The Riemann Zeta-Function: Theory and Applications}. John Wiley \& Sons, New York, 1985 (reprinted by Dover Publications, Inc., Mineola, New York, 2003).

\bibitem{ivic2} Aleksandar Ivi\'{c}. \textit{On the mean square of the divisor function in short intervals}. J. Th\'{e}or. Nombres Bordeaux \textbf{21} (2009), no. 2, 251--261.

\bibitem{ivic3} Aleksandar Ivi\'{c}. \textit{On the divisor function and the Riemann zeta-function in short intervals}. Ramanujan J. \textbf{19} (2009), no. 2, 207--224.

\bibitem{IZ}A. Ivi\'{c} and W. Zhai, \textit{Higher moments of the error term in the divisor problem}. Mathematical
Notes, Vol. 88, No. 3, (2010), pp. 338-346.


\bibitem{jutila} Matti Jutila. \textit{On the divisor problem for short intervals}. Ann. Univ. Turku. Ser. A I \textbf{186} (1984), 23--30. Studies in honour of Arto Kustaa Salomaa on the occasion of his fiftieth birthday.

\bibitem{krrr} J. P. Keating, B. Rodgers, E. Roditty-Gershon, and Z. Rudnick. \textit{Sums of divisor functions in $\mathbb{F}_q[t]$ and matrix integrals}. Math. Z. \textbf{288} (2018), 167--198.	

\bibitem{kolesnik} G. Kolesnik. \textit{On the estimation of multiple exponential sums}. Recent Progress in Analytic Number Theory, Vol. 1 (Durham, 1979), Academic Press, London-New York, 1981, pp. 231--246.

\bibitem{lautsang1} Y.-K. Lau and K.-M. Tsang. \textit{Mean square of the remainder term in the Dirichlet divisor problem}. J. Th\'{e}or. Nombres Bordeaux \textbf{7} (1995), 75--92.

\bibitem{lautsang2} Y.-K. Lau and K.-M. Tsang. \textit{On the mean square formula of the error term in the Dirichlet divisor problem}. Math. Proc. Camb. Philos. Soc. \textbf{146} (2009), 277--287.

\bibitem{lester} Stephen Lester. \textit{On the variance of sums of divisor functions in short intervals}. Proc. Amer. Math. Soc. \textbf{144} (2016), no. 12, 5015--5027.

\bibitem{micah} Micah B. Milinovich and Caroline L. Turnage-Butterbaugh. \textit{Moments of products of automorphic $L$-functions}. J. Number Theory \textbf{139} (2014), 175--204.

\bibitem{montgomery} Hugh L. Montgomery. \textit{Ten Lectures on the Interface Between Analytic Number Theory and Harmonic Analysis}. CBMS Reg. Conf. Ser. Math. 84, Amer. Math. Soc., Providence, 1994.

\bibitem{saffarivaughan} B. Saffari and R. C. Vaughan. \textit{On the fractional parts of $x/n$ and related sequences II}. Ann. Inst. Fourier (Grenoble) \textbf{27} (1977), no. 2, 1--30.

\bibitem{selberg} A. Selberg. \textit{On the normal density of primes in small intervals, and the difference between consecutive primes}. Arch. Math. Naturvid. \textbf{47} (1943), no. 6, 87--105.

\bibitem{sound2} K. Soundararajan. \textit{Moments of the Riemann zeta function}. Ann. of Math. \textbf{170} (2009), no. 2, 981--993.

\bibitem{sound} K. Soundararajan. \textit{Omega results for the divisor and circle problems}. Int. Math. Res. Not. \textbf{2003} (2003), no. 36, 1987--1998.

\bibitem{steinshakarchi} Elias M. Stein and Rami Shakarchi. \textit{Functional Analysis: Introduction to Further Topics in Analysis}. Princeton University Press, 2011.

\bibitem{titchmarsh} E. C. Titchmarsh. \textit{The Theory of the Riemann Zeta-function}. Second edition. Revised by D. R. Heath-Brown. The Clarendon Press, Oxford University Press, New York, 1986.

\bibitem{tongI} K. C. Tong. \textit{On division problems. I}.  Acta. Math. Sinica \textbf{5} (1955), 313--324.

\bibitem{tong} K. C. Tong. \textit{On divisor problems, III}. Acta. Math. Sinica \textbf{5} (1956), 515--541.

\bibitem{tsang4thmoment} Kai-Man Tsang. \textit{Higher-power moments of $\Delta(x)$, $E(t)$ and $P(x)$}. Proc. London Math. Soc. (3) \textbf{65} (1992), 65--84.

\bibitem{tsang2ndmoment} K.-M. Tsang. \textit{Mean square of the remainder term in the Dirichlet divisor problem II}. Acta Arith. \textbf{71} (1995), 279--299.

\bibitem{tsangsurvey} Kai-Man Tsang. \textit{Recent progress on the Dirichlet divisor problem and the mean square of the Riemann zeta-function}. Sci. China Math. \textbf{53} (2010), 2561--2572.

\bibitem{zhai1} W. G. Zhai, \textit{On higher-power moments of $\Delta(x)$}. Acta Arith. 112 (2004), 367–395.

\bibitem{zhai2} W. Zhai, \textit{On higher-power moments of $\Delta(x)$ (II)}. Acta Arith. 114(2004), 35-54. 



\end{thebibliography}
\end{document}